\theoremstyle{definition}
\theoremstyle{plain}
\newcommand{\R}{\mathbb{R}}
\newcommand{\N}{\mathbb{N}}
\newcommand{\E}{\mathbb{E}}
\newcommand{\PP}{\mathbb{P}}
\newcommand{\widetildelow}[1]{\stackrel{\sim}{\smash{#1}\rule{0pt}{1.5pt}}}
\newcommand{\widetildemid}[1]{\stackrel{\sim}{\smash{#1}\rule{0pt}{2.5pt}}}
\newcommand{\1}{\mathds{1}}
\title{A new family of one dimensional martingale couplings}
\author{B. Jourdain\thanks{Université Paris-Est, Cermics (ENPC), INRIA F-77455 Marne-la-Vallée, France. E-mails: benjamin.jourdain@enpc.fr, william.margheriti@enpc.fr} \and W. Margheriti\footnotemark[1]}
\date{\today}
\theoremstyle{plain}
\newtheorem{prooff}{Proof}[section]
\newtheorem{lemma}[prooff]{Lemma}
\newtheorem{cor2}[prooff]{Corollary}
\newtheorem{thm2}[prooff]{Theorem}
\newtheorem{prop2}[prooff]{Proposition}
\theoremstyle{definition}
\newtheorem{example}[prooff]{Example}
\newtheorem{rk}[prooff]{Remark}
\newcommand{\jump}{\vspace{6pt}}
\newcommand{\numberthis}{\addtocounter{equation}{1}\tag{\theequation}}
\numberwithin{equation}{section}
\begin{document}
\maketitle

\begin{abstract}
	In this paper, we exhibit a new family of martingale couplings between 
	two one-dimensional probability measures $\mu$ and $\nu$ in the convex 
	order. This family is parametrised by two dimensional probability measures on the 
	unit square with respective marginal densities proportional to the positive and 
	negative parts of the difference between the quantile functions of $\mu$ 
	and $\nu$. It contains the inverse transform martingale coupling which 
	is explicit in terms of the cumulative distribution 
	functions of these marginal densities. The integral of $|x-y|$ with respect to each of these 
	couplings is smaller than twice the $W_1$ distance between $\mu$ and $\nu$. When the comonotonous coupling between $\mu$ and $\nu$ is given by a map $T$, the elements of the family minimise $\int_\R\vert y-T(x)\vert\,M(dx,dy)$ among all martingale couplings between $\mu$ and $\nu$. When $\mu$ and $\nu$ are in the decreasing (resp. increasing) convex order, the construction is generalised to exhibit super (resp. sub) martingale couplings.
\end{abstract}

\section{Introduction}


For all $d\in\N^*$, $\rho\ge1$ and $\mu,\nu$ in the set $\mathcal P_\rho(\R^d)$ of probability measures on $\R^d$ with finite $\rho$-th moment, we define the Wasserstein distance with index $\rho$ by $W_\rho(\mu,\nu)=(\inf_{P\in\Pi(\mu,\nu)}\int_{\R^d\times\R^d}\vert x-y\vert^\rho\,P(dx,dy))^{1/\rho}$, where $\Pi(\mu,\nu)$ denotes the set of couplings between $\mu$ and $\nu$, that is $\Pi(\mu,\nu)=\{P\in\mathcal P_1(\R^d\times\R^d)\mid\forall A\in\mathcal B(\R^d),\ P(A\times\R^d)=\mu(A)\ \mathrm{and}\ P(\R^d\times A)=\nu(A)\}$. Let $\Pi^{\mathrm{M}}(\mu,\nu)$ be the set of martingale couplings between $\mu$ and $\nu$, that is $\Pi^{\mathrm{M}}(\mu,\nu)=\{M\in\Pi(\mu,\nu)\mid\mu(dx)\text{-a.e.},\ \int_{\R^d}\vert y\vert\,m(x,dy)<+\infty\ \mathrm{and}\ \int_{\R^d}y\,m(x,dy)=x\}$. The celebrated Strassen theorem \cite{strassenThm} ensures that if $\mu,\nu\in\mathcal P_1(\R^d)$, then $\Pi^{\mathrm M}(\mu,\nu)\neq\emptyset$ iff $\mu$ and $\nu$ are in the convex order. We recall that two probability measures $\mu,\nu\in\mathcal P_1(\R^d)$ are in the convex order, and denote $\mu\le_{cx}\nu$, if $\int_{\R^d}f(x)\,\mu(dx)\le\int_{\R^d}f(x)\,\nu(dx)$ for any convex function $f:\R^d\to\R$. We denote $\mu<_{cx}\nu$ if $\mu\le_{cx}\nu$ and $\mu\neq\nu$. For all $\rho\ge1$ and $\mu,\nu\in\mathcal P_\rho(\R^d)$, we define $\mathcal M_\rho(\mu,\nu)$ by
\[
\mathcal M_\rho(\mu,\nu)=\left(\inf_{M\in\Pi^{\mathrm{M}}(\mu,\nu)}\int_{\R^d\times\R^d}\vert x-y\vert^\rho\,M(dx,dy)\right)^{1/\rho}.
\]

Our main result is the following stability inequality which shows that if $\mu$ and $\nu$ are in the convex order and close to each other, then there exists a martingale coupling which expresses this proximity:
\begin{equation}\label{inegaliteRecherchee}
\forall \mu,\nu\in\mathcal P_1(\R)\mbox{ such that }\mu\le_{cx}\nu,\quad\mathcal M_1(\mu,\nu)\le 2W_1(\mu,\nu).
\end{equation}

It is well known (see for instance Remark 2.19 (ii) Chapter 2 \cite{VillaniOT2}) that for all $\mu,\nu\in\mathcal P_\rho(\R)$,
\begin{equation}
W_\rho(\mu,\nu)=\left(\int_0^1\left\vert F_\mu^{-1}(u)-F_\nu^{-1}(u)\right\vert^\rho\,du\right)^{1/\rho},
\end{equation}
where we denote by $F_\eta(x)=\eta((-\infty,x])$ and $F_\eta^{-1}(u)=\inf\{x\in\R\mid F_\mu(x)\ge u\},u\in(0,1)$, the cumulative distribution function and the quantile function of a probability measure $\eta$ on $\R$. We prove the inequality \eqref{inegaliteRecherchee} by exhibiting a new family of martingale couplings $M$ such that $\int_{\R\times\R}\vert x-y\vert\,M(dx,dy)\le 2W_1(\mu,\nu)$. We will show (see the proof of Theorem \ref{GrandMMinimiseCritere}) that the constant $2$ is sharp in \eqref{inegaliteRecherchee}. We will also see that $\eqref{inegaliteRecherchee}$ cannot be generalised with $\mathcal M_1(\mu,\nu)$ and $W_1(\mu,\nu)$ replaced by $\mathcal M_\rho(\mu,\nu)$ and $W_\rho(\mu,\nu)$ for $\rho>1$. The case $\rho=2$ is easy, since for all $\mu,\nu\in\mathcal P_2(\R)$ and $M\in\Pi^{\mathrm{M}}(\mu,\nu)$, $\int_{\R\times\R}\vert x-y\vert^2\,M(dx,dy)=\int_\R y^2\,\nu(dy)-\int_\R x^2\,\mu(dx)$, which is independent from $M$. For all $n\in\N^*$, let $\mu_n$ be the centered Gaussian distribution with variance $n^2$. Then we get that $\mathcal M_2(\mu_n,\mu_{n+1})=\sqrt{2n+1}\underset{n\to+\infty}{\longrightarrow}+\infty$, whereas $W_\rho(\mu_n,\mu_{n+1})=(\int_0^1\vert nF_{\mu_1}^{-1}(u)-(n+1)F_{\mu_{1}}^{-1}(u)\vert^\rho\,du)^{1/\rho}=\E[\vert G\vert^\rho]^{1/\rho}<+\infty$ for $G\sim\mathcal N_1(0,1)$, which makes the equivalent of $\eqref{inegaliteRecherchee}$ impossible to hold. Extension to the case $\rho>2$ is immediate with the same example thanks to Jensen's inequality which provides $\mathcal M_\rho(\mu_n,\mu_{n+1})\ge\mathcal M_2(\mu_n,\mu_{n+1})=\sqrt{2n+1}$, whereas $W_\rho(\mu_n,\mu_{n+1})$ is still bounded.

This problem is motivated by the resolution of the Martingale Optimal Transport (MOT) problem introduced by Beiglböck, Henry-Labordère and Penkner \cite{beiglPHLPenk} in a discrete time setting, and Galichon, Henry-Labordère and Touzi \cite{galichonPHLTouzi} in a continuous time setting. For adaptations of celebrated results on classical optimal transport theory to the MOT problem, we refer to Henry-Labordère, Tan and Touzi \cite{PHLTanTouzi} and Henry-Labordère and Touzi \cite{PHLTouzi2016}. To tackle numerically the MOT problem, we refer to Alfonsi, Corbetta and Jourdain \cite{ApproxMOTJourdain}, Alfonsi, Corbetta and Jourdain \cite{ApproxMOTJourdain2}, De March \cite{deMarch1} and Guo and Ob\l{}\'{o}j \cite{guoObloj}. On duality, we refer to Beiglböck, Nutz and Touzi \cite{beiglbockNutzTouzi}, Beiglböck, Lim and Ob\l{}\'{o}j \cite{beiglblockLimObloj} and De March \cite{deMarch3}. We also refer to De March \cite{deMarch2} and De March and Touzi \cite{deMarchTouzi} for the multi-dimensional case. Once the martingale optimal transport problem is discretised by approximating $\mu$ and $\nu$ by probability measures with finite support and in the convex order, one can raise the question of the convergence of the discrete optimal cost towards the continuous one. The present paper is a step forward in proving the stability of the martingale optimal transport problem with respect to the marginals.  

We develop in Section \ref{MesuresQ} an abstract construction of a new family of martingale couplings between two probability measures $\mu$ and $\nu$ on the real line with finite first moments and comparable in the convex order. This family is parametrised by two dimensional probability measures on the unit square with respective marginal densities proportional to the positive and the negative parts of the difference $F_\mu^{-1}-F_\nu^{-1}$ between the quantile functions  of $\mu$ and $\nu$. 
 Moreover, each martingale coupling in the family is obtained as the image by $(u,y)\mapsto (F_\mu^{-1}(u),y)$ of $\1_{(0,1)}(u)\,du\,\widetilde m^Q(u,dy)$ where $\widetilde m^Q$ is a Markov kernel on $(0,1)\times \R$ such that $\int_{(0,1)}\widetilde m^Q(u,\{y\in\R\mid|y-F_\nu^{-1}(u)|=(y-F_\nu^{-1}(u)){\rm sg}(F_\mu^{-1}(u)-F_{\nu}^{-1}(u))\})du=1$, where ${\rm sg}(x)=\1_{\{x>0\}}-\1_{\{x<0\}}$ for $x\in\R$. Therefore, for $(U,Y)$ distributed according to $\1_{(0,1)}(u)\,du\,\widetilde m^Q(u,dy)$, $(F_\mu^{-1}(U),Y)$ is a martingale coupling and
\begin{equation}\label{egaliteYFnuMoins1DonneW1}
\E[|Y-F_\nu^{-1}(U)|]=\E[{\rm sg}(F_\mu^{-1}(U)-F_\nu^{-1}(U))\E[Y-F_\nu^{-1}(U)|U]]=\E[|F_\mu^{-1}(U)-F_\nu^{-1}(U)|]=W_1(\mu,\nu).
\end{equation}

When the comonotonous coupling between $\mu$ and $\nu$, that is the law of $(F_\mu^{-1}(U),F_\nu^{-1}(U))$, is given by a map $T$, the elements of the family minimise $\int_\R\vert y-T(x)\vert\,M(dx,dy)$ among all martingale couplings between $\mu$ and $\nu$. We deduce from \eqref{egaliteYFnuMoins1DonneW1} that $\E[|Y-F_\mu^{-1}(U)|]\le\E[|Y-F_\nu^{-1}(U)|]+\E[|F_\nu^{-1}(U)-F_\mu^{-1}(U)|]=2 W_1(\mu,\nu)$ which implies \eqref{inegaliteRecherchee} as soon as the parameter set of probability measures on the unit square in non empty.

In Section \ref{sec:ITMC}, we give  an explicit example of such a probability measure on the unit square. We call the associated martingale coupling the inverse transform martingale coupling. This coupling is explicit in terms of the cumulative distribution functions of the above-mentioned densities and their left-continuous generalised inverses. It is therefore more explicit than the left-curtain (and right-curtain) coupling introduced by Beiglböck and Juillet \cite{BeiglbockJuillet} and which under the condition that $\nu$ has no atoms and the set of local maximal values of $F_\nu-F_\mu$ is finite can be explicited according to Henry-Labordère and Touzi \cite{PHLTouzi2016} by solving two coupled ordinary differential equations starting from each right-most local maximiser. We also check that the inverse transform martingale coupling is stable  with respect to its marginals $\mu$ and $\nu$ for the Wasserstein distance. The building brick of the inverse transform martingale coupling is a martingale coupling between $\mu_{u,v}=p\delta_{F_\mu^{-1}(u)}+(1-p)\delta_{F_\mu^{-1}(v)}$ and $\nu_{u,v}=p\delta_{F_\nu^{-1}(u)}+(1-p)\delta_{F_\nu^{-1}(v)}$ with $0<u<v<1$ such that
\begin{equation}\label{buildingBrickITMC}
F_\nu^{-1}(u)<F_\mu^{-1}(u)<F_\mu^{-1}(v)<F_\nu^{-1}(v),
\end{equation}
where we choose a common weight $p$ (resp. $1-p$) for $F_\mu^{-1}(u)$ and $F_\nu^{-1}(u)$  (resp. $F_\mu^{-1}(v)$ and $F_\nu^{-1}(v)$) to help ensuring that the second marginal is equal to $\nu$ when the first is equal to $\mu$. Then $p$ is given by the equality of the means which in view of the condition \eqref{buildingBrickITMC} on the supports is equivalent to the convex order between $\mu_{u,v}$ and $\nu_{u,v}$: $\frac{1-p}{p}=\frac{F_\mu^{-1}(u)-F_\nu^{-1}(u)}{F_\nu^{-1}(v)-F_\mu^{-1}(v)}$. 
We rely on the necessary condition of Theorem 3.A.5 Chapter 3 \cite{ShakedShanthikumar}: $\mu,\nu\in\mathcal P_1(\R)$ are such that $\mu\le_{cx}\nu$ iff for all $u\in[0,1]$, $\int_0^uF_\mu^{-1}(v)\,dv\ge\int_0^uF_\nu^{-1}(v)dv$ with equality for $u=1$. This implies that for all $u\in[0,1]$, $\Psi_+(u):=\int_0^u(F_\mu^{-1}-F_\nu^{-1})^+(v)\,dv\ge \int_0^u(F_\mu^{-1}-F_\nu^{-1})^-(v)\,dv:=\Psi_-(u)$ where $x^+:=\max(x,0)$ and $x^-:=\max(-x,0)$ respectively denote the positive and negative parts of a real number $x$. We now choose $v=\Psi_-^{-1}(\Psi_+(u))$ where $\Psi_-^{-1}$ is the left-continuous generalised inverse of $\Psi_-$. Then $d\Psi_+(u)$ a.e. $u<v$ (consequence of $\Psi_-\le\Psi_+$) and $F_\nu^{-1}(u)<F_\mu^{-1}(u)<F_\mu^{-1}(v)<F_\nu^{-1}(v)$ (consequence of the definitions of $\Psi_+$ and $\Psi_-$, see Section \ref{sousSec:DefITMC}). Moreover the key equality $\frac{dv}{du}=\frac{(F_\mu^{-1}-F_\nu^{-1})^+(u)}{(F_\mu^{-1}-F_\nu^{-1})^-(v)}=\frac{1-p}{p}$ explains why the construction succeeds. More details are given in Section \ref{sec:ITMC}.

The cardinality of this new family of martingale couplings between $\mu$ and $\nu$ is discussed in Section \ref{subsec:InfiniteAmountMartingaleCouplings}. This family is shown to be convex and is therefore either a singleton like when $\nu$ only weighs two points, or uncountably infinite like when $\mu(\{x\})=\nu(\{x\})=0$ for all $x\in\R$.

The construction is finally generalised in Section \ref{sec:SubSuperMartingaleCase} to exhibit super (resp. sub) martingale couplings as soon as $\mu$ is smaller than $\nu$ in the decreasing (resp. increasing) convex order. We recall that two probability measures $\mu,\nu\in\mathcal P_1(\R)$ are in the decreasing (resp. increasing) convex order and denote $\mu\le_{dcx}\nu$ (resp. $\mu\le_{icx}\nu$) if $\int_{\R}f(x)\,\mu(dx)\le\int_{\R}f(x)\,\nu(dx)$ for any decreasing (resp. increasing) convex function $f:\R\to\R$.

In all the paper, a capital letter $M$ which denotes a coupling between $\mu$ and $\nu$ is associated to its small letter $m$ which denotes the regular conditional probability distribution of $M$ with respect to $\mu$, that is the ($\mu$-a.e.) unique Markov kernel such that $M(dx,dy)=\mu(dx)\,m(x,dy)$.



{\bf Acknowledgements:} We thank Jean-François Delmas (CERMICS) for numerous and fruitful discussions.

\section{A new family of martingale couplings}
\label{MesuresQ}

\subsection{A simple example}
\label{sousSec:aSimpleExample}

Let us construct a coupling in dimension $1$ which shows that \eqref{inegaliteRecherchee} holds true in a simple case. We say that a centered probability measure $\mu\in\mathcal P_1(\R)$ is symmetric if $\mu=\bar{\mu}$, where $\bar\mu$ denotes the image of $\mu$ by $x\mapsto -x$. Let then $\mu$ and $\nu$ be centered and symmetric probability measures on $\R$ such that $ F_\mu^{-1}(u)\ge F_\nu^{-1}(u)$ for all $u\in(0,1/2]$ and $F_\mu^{-1}(u)\le F_\nu^{-1}(u)$ for all $u\in(1/2,1)$. Let $U$ be a random variable uniformly distributed on $(0,1)$. According to the inverse transform sampling, the probability distributions of $F_\mu^{-1}(U)$ and $F_\nu^{-1}(U)$ are respectively $\mu$ and $\nu$. Let $Y$ be the random variable defined by
	\begin{equation}\label{defCouplageCasSymetrique}
	Y=F_\nu^{-1}(U)\1_{\{F_\nu^{-1}(U)\neq0,V\le\frac{F_\mu^{-1}(U)+F_\nu^{-1}(U)}{2F_\nu^{-1}(U)}\}}-F_\nu^{-1}(U)\1_{\{F_\nu^{-1}(U)\neq0,V>\frac{F_\mu^{-1}(U)+F_\nu^{-1}(U)}{2F_\nu^{-1}(U)}\}},
	\end{equation}
	where $V$ is a random variable uniformly distributed on $(0,1)$ independent from $U$. It is clear by symmetry of $\mu$ that $F_\mu(0)\ge1/2$, so $F_\mu^{-1}(1/2)\le0$. Moreover, for all $x\in\R$ and $u>1/2$, $F_\mu(x)\ge u$ implies $x\ge0$, so $F_\mu^{-1}(u)\ge0$. Therefore, we have
	\begin{equation}\label{inegaliteCasSymetrique}
	\forall u\in(0,1/2],\quad F_\nu^{-1}(u)\le F_\mu^{-1}(u)\le 0\quad\textrm{and}\quad\forall u\in(1/2,1),\quad0\le F_\mu^{-1}(u)\le F_\nu^{-1}(u).
	\end{equation}
	
	In particular, when $F_\nu^{-1}(U)=0$, then $F_\mu^{-1}(U)=0$ and $Y=0$. Let us check that $Y$ is distributed according to $\nu$. Using that $(F_\mu^{-1}(U),F_\nu^{-1}(U))$ and $(-F_\mu^{-1}(U),-F_\nu^{-1}(U))$ are identically distributed (see Lemma \ref{caracterisationLoiSymetriqueFonctionQuantile} in Section \ref{Appendix}), we have for all measurable and bounded function $h:\R\to\R$,
	\begin{align*}
	\E[h(Y)]&=\E\bigg[h(F_\nu^{-1}(U))\1_{\{F_\nu^{-1}(U)\neq0,V\le\frac{F_\mu^{-1}(U)+F_\nu^{-1}(U)}{2F_\nu^{-1}(U)}\}}\bigg]+\E\bigg[h(F_\nu^{-1}(U))\1_{\{F_\nu^{-1}(U)\neq0,V>\frac{F_\mu^{-1}(U)+F_\nu^{-1}(U)}{2F_\nu^{-1}(U)}\}}\bigg]\\
	&\phantom{=}+h(0)\PP(F_\nu^{-1}(U)=0)\\
	&=\E[h(F_\nu^{-1}(U))].
	\end{align*}
	
	Moreover, according to \eqref{inegaliteCasSymetrique}, we have $\frac{F_\mu^{-1}(u)+F_\nu^{-1}(u)}{2F_\nu^{-1}(u)}\in[0,1]$ for all $u\in(0,1)$ such that $F_\nu^{-1}(u)\neq0$. In addition to that, we have $F_\nu^{-1}(u)\frac{F_\mu^{-1}(u)+F_\nu^{-1}(u)}{2F_\nu^{-1}(u)}-F_\nu^{-1}(u)\left(1-\frac{F_\mu^{-1}(u)+F_\nu^{-1}(u)}{2F_\nu^{-1}(u)}\right)=F_\mu^{-1}(u)$ for all $u\in(0,1)$ such that $F_\nu^{-1}(u)\neq0$. So $\E[Y\vert U]=F_\mu^{-1}(U)\1_{\{F_\nu^{-1}(U)\neq0\}}=F_\mu^{-1}(U)$ since $F_\nu^{-1}(U)=0$ implies $F_\mu^{-1}(U)=0$. So we deduce that $\E[Y\vert F_\mu^{-1}(U)]=F_\mu^{-1}(U)$. Therefore, the law of $(F_\mu^{-1}(U),Y)$ is an explicit martingale coupling between $\mu$ and $\nu$.
	
	Furthermore, remarking that $|Y-F_\nu^{-1}(U)|=(Y-F_\nu^{-1}(U)){\rm sg}(F_\mu^{-1}(U)-F_\nu^{-1}(U))$, we deduce from the equality \eqref{egaliteYFnuMoins1DonneW1} that $\E[|Y-F_\mu^{-1}(U)|]\le\E[|Y-F_\nu^{-1}(U)|]+\E[|F_\nu^{-1}(U)-F_\mu^{-1}(U)|]=2 W_1(\mu,\nu)$, so \eqref{inegaliteRecherchee} holds.

\subsection{Definition}
\label{sousSec:DefMesuresQ}

Let $\mu$ and $\nu$ be two probability measures on $\R$ with finite first moment such that $\int_\R x\,\mu(dx)=\int_\R y\,\nu(dy)$ and $\mu\neq\nu$. We recall that $\Psi_+$ and $\Psi_-$ are defined for all $u\in[0,1]$ by $\Psi_+(u)=\int_0^u(F_\mu^{-1}-F_\nu^{-1})^+(v)\,dv$ and $\Psi_-(u)=\int_0^u(F_\mu^{-1}-F_\nu^{-1})^-(v)\,dv$. Let $\mathcal U_+$, $\mathcal U_-$ and $\mathcal U_0$ be defined by
\begin{multline}\label{defUmoinsUplusUzero}
\mathcal U_+=\{u\in(0,1)\mid F_\mu^{-1}(u)>F_\nu^{-1}(u)\},\quad\mathcal U_-=\{u\in(0,1)\mid F_\mu^{-1}(u)<F_\nu^{-1}(u)\}\\
\textrm{and}\quad\mathcal U_0=\{u\in(0,1)\mid F_\mu^{-1}(u)=F_\nu^{-1}(u)\}.
\end{multline}

Notice that $d\Psi_+(u)$-a.e. (resp. $d\Psi_-(u)$-a.e.), we have $u\in\mathcal U_+$ (resp. $u\in\mathcal U_-$). Since $\mu$ and $\nu$ have equal means, we can set $\gamma=\int_0^1(F_\mu^{-1}-F_\nu^{-1})^+(u)\,du=\int_0^1(F_\mu^{-1}-F_\nu^{-1})^-(u)\,du\in(0,+\infty)$.  We note $\mathcal Q$ the set of probability measures $Q(du,dv)$ on $(0,1)^2$ such that
\begin{enumerate}[(i)]
	\item $Q$ has first marginal $\frac1\gamma(F_\mu^{-1}-F_\nu^{-1})^+(u)\,du=\frac1\gamma\,d\Psi_+(u)$;
	\item $Q$ has second marginal $\frac1\gamma(F_\mu^{-1}-F_\nu^{-1})^-(v)\,dv=\frac1\gamma\,d\Psi_-(v)$;
	\item $Q\left(\{(u,v)\in (0,1)^2\mid u<v\}\right)=1$.
\end{enumerate}

\begin{example}\label{exempleQ1} Let $\mu,\nu\in\mathcal P_1(\R)$ be such that $\mu<_{cx}\nu$. Suppose that the difference of the quantile functions changes sign only once, that is there exists $p\in(0,1)$ such that $u\mapsto\int_0^u(F_\mu^{-1}(v)-F_\nu^{-1}(v))\,dv$ is nondecreasing on $[0,p]$ and nonincreasing on $[p,1]$. Then one can easily see that any probability measure $Q$ defined on $(0,1)$ satisfying properties $(i)$ and $(ii)$ of the definition of $\mathcal Q$ is concentrated on $(0,p)\times(p,1)$ and therefore satisfies $(iii)$. In particular, the probability measure $Q_1$ defined on $(0,1)^2$ by
	\begin{equation}
	\label{exempleMesureQ1}
	Q_1(du,dv)=\frac{1}{\gamma^2}(F_\mu^{-1}-F_\nu^{-1})^+(u)\,du\,(F_\mu^{-1}-F_\nu^{-1})^-(v)\,dv
	\end{equation}
	is an element of $\mathcal Q$.
\end{example}

In view of $(i)$ and $(ii)$, one could rewrite $(iii)$ as $Q\left(\{(u,v)\in\mathcal U_+\times\mathcal U_-\mid u<v\}\right)=1$. A characterisation of the support of $Q$ in terms of the irreducible components of $\mu$ and $\nu$ is given by Proposition \ref{ComposantesIrreductiblesSupportMesuresQ} below. In the general case, the construction of a probability measure $Q\in\mathcal Q$ is not straightforward, but a direct consequence of Proposition \ref{ITMCcasParticulier} below is that $\mathcal Q$ is non-empty as long as $\mu,\nu\in\mathcal P_1(\R)$ are such that $\mu<_{cx}\nu$. Moreover, the convexity of $\mathcal Q$ is clear.

\begin{prop2}\label{QnonVide} Let $\mu,\nu\in\mathcal P_1(\R)$ be such that $\mu<_{cx}\nu$. Then $\mathcal Q$ is a non-empty convex set.
\end{prop2}

Let $Q$ be an element of $\mathcal Q$. Let $\pi^Q_-$ and $\pi^Q_+$ be two sub-Markov kernels on $(0,1)$ such that for $du$-almost all $u\in\mathcal U_+$ and $dv$-almost all $v\in\mathcal U_-$, $\pi^Q_+(u,(0,1))=1$, $\pi^Q_-(v,(0,1))=1$ and
\begin{align*}
Q(du,dv)&=\frac1\gamma(F_\mu^{-1}-F_\nu^{-1})^+(u)\,du\,\pi^Q_+(u,dv)=\frac1\gamma(F_\mu^{-1}-F_\nu^{-1})^-(v)\,dv\,\pi^Q_-(v,du).
\end{align*}

Let $(\widetilde m^Q(u,dy))_{u\in(0,1)}$ be the Markov kernel defined by

\begin{equation}\label{defPetitm}
\left\{\begin{array}{r}
\displaystyle\int_{v\in(0,1)}\frac{F_\mu^{-1}(u)-F_\nu^{-1}(u)}{F_\nu^{-1}(v)-F_\nu^{-1}(u)}\,\delta_{F_\nu^{-1}(v)}(dy)\,\pi^Q_+(u,dv)+\int_{v\in(0,1)}\frac{F_\nu^{-1}(v)-F_\mu^{-1}(u)}{F_\nu^{-1}(v)-F_\nu^{-1}(u)}\,\pi^Q_+(u,dv)\,\delta_{F_\nu^{-1}(u)}(dy)\jump\\
\mathrm{for}\ u\in\mathcal U_+\ \textrm{such that}\ \pi^Q_+(u,\{v\in(0,1)\mid F_\nu^{-1}(v)>F_\mu^{-1}(u)\})=1;\\
\\
\displaystyle\int_{v\in(0,1)}\frac{F_\mu^{-1}(u)-F_\nu^{-1}(u)}{F_\nu^{-1}(v)-F_\nu^{-1}(u)}\,\delta_{F_\nu^{-1}(v)}(dy)\,\pi^Q_-(u,dv)+\int_{v\in(0,1)}\frac{F_\nu^{-1}(v)-F_\mu^{-1}(u)}{F_\nu^{-1}(v)-F_\nu^{-1}(u)}\,\pi^Q_-(u,dv)\,\delta_{F_\nu^{-1}(u)}(dy)\jump\\
\mathrm{for}\ u\in\mathcal U_-\ \textrm{such that}\ \pi^Q_-(u,\{v\in(0,1)\mid F_\nu^{-1}(v)<F_\mu^{-1}(u)\})=1;\\
\\
\delta_{F_\nu^{-1}(u)}(dy)\quad\quad\quad\quad \mathrm{otherwise.}\\
\end{array}
\right.
\end{equation}

For any Markov kernel $(\widetilde m(u,dy))_{u\in(0,1)}$, we denote by $(m(x,dy))_{x\in\R}$ the Markov kernel defined by
\begin{equation}\label{defGrandM}
\left\{
\begin{array}{cr}
\delta_x(dy)&\text{if}\ F_\mu(x)=0\ \text{or}\ F_\mu(x_-)=1;\\
\\
\displaystyle\frac{1}{\mu(\{x\})}\int_{u=F_\mu(x_-)}^{F_\mu(x)}\widetilde m(u,dy)\,du&\text{if}\ \mu(\{x\})>0;\\
\\
\displaystyle \widetilde m(F_\mu(x),dy)&\text{otherwise.}
\end{array}
\right.
\end{equation}

For all $x\in\R$ such that $F_\mu(x)>0$ and $F_\mu(x_-)<1$, $m(x,dy)$ can be rewritten as
\begin{equation}\label{defAlternativeGrandM}
m(x,dy)=\int_{v=0}^1\widetilde m(F_\mu(x_-)+v(F_\mu(x)-F_\mu(x_-)),dy)\,dv.
\end{equation}

Conversely, let $(p(x,dy))_{x\in\R}$ be a Markov kernel. Let then $(\widetilde m(u,dy))_{u\in(0,1)}$ be the Markov kernel defined for all $u\in(0,1)$ by $\widetilde m(u,dy)=p(F_\mu^{-1}(u),dy)$. Let $(m(x,dy))_{x\in\R}$ be the Markov kernel defined by \eqref{defGrandM}. Let $x\in\R$ be such that $F_\mu(x_-)>0$ and $F_\mu(x)<1$. If $\mu(\{x\})>0$, then for all $u\in(F_\mu(x_-),F_\mu(x)]$, $F_\mu^{-1}(u)=x$. Hence $m(x,dy)=\frac{1}{\mu(\{x\})}\int_{u=F_\mu(x_-)}^{F_\mu(x)}\widetilde m(u,dy)\,du=\frac{1}{\mu(\{x\})}\int_{u=F_\mu(x_-)}^{F_\mu(x)}p(x,dy)\,du=p(x,dy)$. By Lemma \ref{FmoinsUnrondF}, $F_\mu^{-1}(F_\mu(x))=x$, $\mu(dx)$-almost everywhere. So for $\mu(dx)$-almost all $x\in\R$ such that $F_\mu(x_-)>0$, $F_\mu(x)<1$ and $\mu(\{x\})=0$, $m(x,dy)=p(F_\mu^{-1}(F_\mu(x)),dy)=p(x,dy)$. Therefore, for $\mu(dx)$-almost all $x\in\R$, $p(x,dy)=m(x,dy)$.

In all the paper, for any $Q\in\mathcal Q$, $(m^Q(x,dy))_{x\in\R}$ and $M^Q$ will respectively denote the Markov kernel given by \eqref{defGrandM} when $(\widetilde m(u,dy))_{u\in(0,1)}=(\widetilde m^Q(u,dy))_{u\in(0,1)}$ and the probability measure on $\R^2$ defined by $M^Q(dx,dy)=\mu(dx)\,m^Q(x,dy)$.

\begin{prop2}\label{muMcouplageMartingale} Let $\mu$ and $\nu$ be two distinct probability measures on $\R$ with finite first moment and equal means such that $\mathcal Q$ is non-empty. Let $Q\in\mathcal Q$. Then the  probability measure $M^Q$ is a martingale coupling between $\mu$ and $\nu$.
\end{prop2}

One can easily check thanks to Jensen's inequality that the existence of a martingale coupling between $\mu$ and $\nu$ implies that $\mu\le_{cx}\nu$ (see Remark \ref{rqPreuveConstructiveStrassen} for a proof). A direct consequence of the latter fact and the two last propositions is an easy characterisation of the emptiness of $\mathcal Q$.

\begin{cor2}\label{caracterisationQNonVide} Let $\mu$ and $\nu$ be two distinct probability measures on $\R$ with finite first moment and equal means. Then $\mathcal Q\neq\emptyset$ iff $\mu\le_{cx}\nu$.
\end{cor2}

The proof of Proposition \ref{muMcouplageMartingale} relies on the two following lemmas.

\begin{lemma}\label{Petitmdupp} Let $Q\in\mathcal Q$. For $du$-almost all $u\in(0,1)$,
	\[
	\left\{
	\begin{array}{rcl}
	u\in\mathcal U_+&\implies&F_\nu^{-1}(v)>F_\mu^{-1}(u),\ \pi^Q_+(u,dv)\text{-a.e;}\jump\\
	u\in\mathcal U_-&\implies&F_\nu^{-1}(v)<F_\mu^{-1}(u),\ \pi^Q_-(u,dv)\text{-a.e.}
	\end{array}
	\right.
	\]
\end{lemma}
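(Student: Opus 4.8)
The plan is to recover the claim from the defining relations between $Q$, its disintegration kernels $\pi_+^Q,\pi_-^Q$, and the functions $\Psi_+,\Psi_-$ together with property $(iii)$. First I would fix attention on the $+$ case; the $-$ case is symmetric after exchanging the roles of $\mu$ and $\nu$ and of the positive and negative parts. Since $Q$ is carried by $\{(u,v)\mid u<v\}$ (property $(iii)$) and has first marginal $\frac1\gamma\,d\Psi_+$, we get that for $d\Psi_+(u)$-almost every $u$, $\pi_+^Q(u,\cdot)$ is carried by $(u,1)$; equivalently, for $du$-almost every $u\in\mathcal U_+$ (recall $d\Psi_+(u)$-a.e.\ point lies in $\mathcal U_+$, and on $\mathcal U_+$ the density $(F_\mu^{-1}-F_\nu^{-1})^+$ is strictly positive) we have $v>u$, $\pi_+^Q(u,dv)$-a.e.

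The key point is then to promote $v>u$ to $F_\nu^{-1}(v)>F_\mu^{-1}(u)$, and this is where the structure of $\Psi_+,\Psi_-$ enters — I expect this to be the main obstacle. I would argue as follows. By property $(ii)$, $Q$ has second marginal $\frac1\gamma\,d\Psi_-$, hence for $d\Psi_+(u)$-a.e.\ $u$, $\pi_+^Q(u,\cdot)$ is carried by the support of $d\Psi_-$, so $dv$-a.e.\ such $v$ lies in $\mathcal U_-$, where $F_\nu^{-1}(v)>F_\mu^{-1}(v)$. It therefore suffices to show that $du\otimes\pi_+^Q(u,dv)$-a.e.\ one has $F_\mu^{-1}(v)\ge F_\mu^{-1}(u)$, i.e.\ the monotonicity of $F_\mu^{-1}$ can be invoked. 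Since $v>u$ for a.e.\ pair, and $F_\mu^{-1}$ is nondecreasing, $F_\mu^{-1}(v)\ge F_\mu^{-1}(u)$ holds automatically. Chaining: $F_\nu^{-1}(v)>F_\mu^{-1}(v)\ge F_\mu^{-1}(u)$, which is exactly the asserted inequality on $\mathcal U_+$.

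For the $-$ case I would run the mirror argument: for $du$-a.e.\ $u\in\mathcal U_-$, the disintegration identity $Q(du,dv)=\frac1\gamma\,d\Psi_-(u)\,\pi_-^Q(u,dv)$ combined with property $(i)$ shows $\pi_-^Q(u,\cdot)$ is carried by $\mathcal U_+$ and, via property $(iii)$ read as $Q(\{u<v\})=1$ together with the fact that $\pi_-^Q$ disintegrates $Q$ along the \emph{second} variable, that $v<u$ $\pi_-^Q(u,dv)$-a.e.; then on $\mathcal U_+$, $F_\nu^{-1}(v)<F_\mu^{-1}(v)\le F_\mu^{-1}(u)$ by monotonicity of $F_\mu^{-1}$, giving the claim. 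The only subtlety to handle carefully is the bookkeeping of null sets: one must intersect finitely many $du$-full-measure sets (the set where $\pi_+^Q(u,(0,1))=1$, the set where $v>u$ $\pi_+^Q(u,dv)$-a.e., and the set where $u\in\mathcal U_+$), and use that a countable intersection of $du$-conull sets is $du$-conull, so that the final statement holds for $du$-a.e.\ $u\in(0,1)$ as stated. I do not expect any genuine difficulty beyond this measure-theoretic care; the geometric content is entirely in the monotonicity of the quantile function and the sign conventions built into $\mathcal U_+,\mathcal U_-$.
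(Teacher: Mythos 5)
Your argument is correct and uses exactly the same ingredients as the paper's proof: property $(iii)$ to get $v>u$ for $Q$-almost every pair, the vanishing of the second marginal density $(F_\mu^{-1}-F_\nu^{-1})^-$ off $\mathcal U_-$ to place $v$ in $\mathcal U_-$, and the monotonicity of $F_\mu^{-1}$ to chain $F_\nu^{-1}(v)>F_\mu^{-1}(v)\ge F_\mu^{-1}(u)$. The paper merely packages this as a single integral inequality for the indicator of the bad event $\{F_\nu^{-1}(v)\le F_\mu^{-1}(u)\}$ rather than as a pointwise disintegration argument, so the two proofs are essentially identical.
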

\begin{proof}[Proof of Lemma \ref{Petitmdupp}] We have
	\begin{align*}
	&\int_{(0,1)}\left(\int_{(0,1)}\1_{\{F_\nu^{-1}(v)\le F_\mu^{-1}(u)\}}\pi^Q_+(u,dv)\right)(F_\mu^{-1}-F_\nu^{-1})^+(u)\,du=\gamma\int_{(0,1)^2}\1_{\{F_\nu^{-1}(v)\le F_\mu^{-1}(u)\}}\,Q(du,dv)\\
	&\le\gamma\int_{(0,1)^2}\1_{\{F_\nu^{-1}(v)\le F_\mu^{-1}(v)\}}\,Q(du,dv)=\int_{(0,1)^2}\1_{\{F_\mu^{-1}(v)-F_\nu^{-1}(v)\ge0\}}(F_\mu^{-1}-F_\nu^{-1})^-(v)\,dv\,\pi^Q_-(v,du)\\
	&=0,
	\end{align*}
	where we used for the inequality that $u<v$, $Q(du,dv)$-almost everywhere and that $F_\mu^{-1}$ is nondecreasing. So for $du$-almost all $u\in\mathcal U_+$, $\pi^Q_+(u,dv)$-a.e., $F_\nu^{-1}(v)>F_\mu^{-1}(u)$. With a symmetric reasoning, we obtain that for $du$-almost all $u\in\mathcal U_-$, $\pi^Q_-(u,dv)$-a.e., $F_\nu^{-1}(v)<F_\mu^{-1}(u)$.
\end{proof}

\begin{lemma}\label{lienPetitmGrandM} Let $(\widetilde m(u,dy))_{u\in(0,1)}$ be a Markov kernel and let $(m(x,dy))_{x\in\R}$ be given by \eqref{defGrandM}. Then
	\[
	\mu(dx)\,m(x,dy)=(F_\mu^{-1}(u),y)_\sharp\left(\1_{(0,1)}(u)\,du\,\widetilde m(u,dy)\right),
	\]
	where $\sharp$ denotes the pushforward operation.
\end{lemma}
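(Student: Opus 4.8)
The plan is to test both sides of the claimed identity against an arbitrary bounded measurable function $g:\R^2\to\R$ and show the two integrals coincide. On the right-hand side, by definition of the pushforward, $\int_{\R^2}g(x,y)\,(F_\mu^{-1}(u),y)_\sharp(\1_{(0,1)}(u)\,du\,\widetilde m(u,dy)) = \int_0^1\int_\R g(F_\mu^{-1}(u),y)\,\widetilde m(u,dy)\,du$. On the left-hand side it is $\int_\R\int_\R g(x,y)\,m(x,dy)\,\mu(dx)$. So the lemma reduces to the identity $\int_\R\left(\int_\R g(x,y)\,m(x,dy)\right)\mu(dx) = \int_0^1\left(\int_\R g(F_\mu^{-1}(u),y)\,\widetilde m(u,dy)\right)du$, which is exactly the statement that the disintegration formula \eqref{defGrandM} ``undoes'' the change of variables $u\mapsto F_\mu^{-1}(u)$.

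The key step is to decompose the real line according to the three cases in \eqref{defGrandM}. First I would split $\mu$ into the part carried by atoms and the atomless part. For an atom $x$ with $\mu(\{x\})>0$ one has $F_\mu^{-1}(u)=x$ for every $u\in(F_\mu(x_-),F_\mu(x)]$, an interval of length exactly $\mu(\{x\})$; so $\int_\R g(x,y)\,m(x,dy)\,\mu(\{x\}) = \int_{F_\mu(x_-)}^{F_\mu(x)}\int_\R g(x,y)\,\widetilde m(u,dy)\,du = \int_{F_\mu(x_-)}^{F_\mu(x)}\int_\R g(F_\mu^{-1}(u),y)\,\widetilde m(u,dy)\,du$ by the second line of \eqref{defGrandM}, and summing over the countably many atoms matches the corresponding part of the $u$-integral. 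For the atomless part, I would invoke Lemma \ref{FmoinsUnrondF}, which gives $F_\mu^{-1}(F_\mu(x))=x$ for $\mu(dx)$-a.e.\ $x$; combined with the classical fact that the pushforward of Lebesgue measure on $(0,1)$ by $F_\mu^{-1}$ is $\mu$, the change of variables $u=F_\mu(x)$ turns $\int g(x,y)\,\widetilde m(F_\mu(x),dy)\,\mu(dx)$ (the third line of \eqref{defGrandM}) into $\int_{\text{non-atomic part of }(0,1)} g(F_\mu^{-1}(u),y)\,\widetilde m(u,dy)\,du$. The first line of \eqref{defGrandM} concerns $x$ with $F_\mu(x)=0$ or $F_\mu(x_-)=1$, which is a $\mu$-null set, and the corresponding set of $u$'s in $(0,1)$ is Lebesgue-null, so it contributes nothing on either side.

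The main obstacle is the bookkeeping: verifying that the union of the $u$-intervals $(F_\mu(x_-),F_\mu(x)]$ over all atoms $x$, together with the set $\{u\in(0,1):F_\mu^{-1}(u)\text{ is not an atom}\}$, covers $(0,1)$ up to a Lebesgue-null set and that these pieces are disjoint — in other words, that the case distinction in \eqref{defGrandM} is exhaustive and the change of variables does not double-count. This is a standard but slightly delicate fact about quantile functions; it is precisely what Lemma \ref{FmoinsUnrondF} (together with the elementary observation that $u\mapsto F_\mu^{-1}(u)$ lands in an atom iff $u$ belongs to one of those half-open intervals) is there to supply, so once those auxiliary facts are granted the argument is a routine Fubini-and-change-of-variables computation.
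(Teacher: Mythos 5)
Your proof is correct, but it follows a genuinely different route from the paper's. You partition the problem according to the case distinction in \eqref{defGrandM}: atoms of $\mu$ are matched with the half-open intervals $(F_\mu(x_-),F_\mu(x)]$ on which $F_\mu^{-1}$ is constant, the diffuse part is handled by the substitution $u=F_\mu(x)$ together with $F_\mu^{-1}(F_\mu(x))=x$ $\mu$-a.e.\ (Lemma \ref{FmoinsUnrondF}) and the inverse transform sampling, and the degenerate first case is dismissed as $\mu$-null via Lemma \ref{FcomprisEntre0et1}. The paper instead avoids any case split: it uses the unified rewriting \eqref{defAlternativeGrandM}, $m(x,dy)=\int_0^1\widetilde m\bigl(F_\mu(x_-)+v(F_\mu(x)-F_\mu(x_-)),dy\bigr)\,dv$, introduces the map $\theta(x,v)=F_\mu(x_-)+v(F_\mu(x)-F_\mu(x_-))$, and applies Lemma \ref{FmuXCorrigeUniforme} once, which says precisely that $\theta_\sharp(\mu\otimes\1_{(0,1)}(v)\,dv)$ is Lebesgue measure on $(0,1)$ and that $F_\mu^{-1}(\theta(x,v))=x$ almost everywhere; the whole lemma is then a single pushforward computation. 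What the paper's route buys is that all the bookkeeping you flag as the ``main obstacle'' --- disjointness and exhaustiveness of the atom intervals, the identity $F_\mu(F_\mu^{-1}(u))=u$ off the atom intervals, null-set matching --- is absorbed into the one auxiliary Lemma \ref{FmuXCorrigeUniforme}. What your route buys is that it needs no extra randomisation $v$ and rests only on the standard quantile-function facts; the price is that you must carry out that bookkeeping explicitly (in particular, you should note that the left endpoints $F_\mu(x_-)$ of the atom intervals form a countable, hence Lebesgue-null, set, and that for $u$ outside all atom intervals one indeed has $F_\mu(F_\mu^{-1}(u))=u$, which is what lets you match the third line of \eqref{defGrandM}). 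Both arguments are complete once these points are written out.
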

\begin{proof}[Proof of Lemma \ref{lienPetitmGrandM}] Let $h:\R^2\to\R$ be a measurable and nonnegative function. By Lemma \ref{FcomprisEntre0et1} below, $F_\mu(x)>0$ and $F_\mu(x_-)<1$, $\mu(dx)$-almost everywhere. So using \eqref{defAlternativeGrandM}, we have
	\begin{multline*}
	\int_{\R\times\R}h(x,y)\,\mu(dx)\,m(x,dy)\\
	=\int_{\R\times\R\times(0,1)}h(x,y)\1_{\{0<F_\mu(x),F_\mu(x_-)<1\}}\,\mu(dx)\,\widetilde m(F_\mu(x_-)+v(F_\mu(x)-F_\mu(x_-)),dy)\,dv.
	\end{multline*}
	
	Let $\theta:(x,v)\mapsto F_\mu(x_-)+v(F_\mu(x)-F_\mu(x_-))$. By Lemma \ref{FmuXCorrigeUniforme} below, $x=F_\mu^{-1}(\theta(x,v))$, $\mu(dx)\otimes dv$-almost everywhere on $\R\times(0,1)$ and $\theta(x,v)_\sharp(\mu(dx)\otimes\1_{(0,1)}(v)\,dv)=\1_{(0,1)}(u)\,du$. So
	\begin{multline*}
	\int_{\R\times\R}h(x,y)\,\mu(dx)\,m(x,dy)\\
	=\int_{\R\times\R\times(0,1)}h(F_\mu^{-1}(\theta(x,v)),y)\1_{\{0<F_\mu(F_\mu^{-1}(\theta(x,v))),F_\mu(F_\mu^{-1}(\theta(x,v)_-))<1\}}\,\mu(dx)\,\widetilde m(\theta(x,v),dy)\,dv\\
	=\int_{\R\times(0,1)}h(F_\mu^{-1}(u),y)\1_{\{0<F_\mu(F_\mu^{-1}(u)),F_\mu(F_\mu^{-1}(u)_-)<1\}}\,\widetilde m(u,dy)\,du.
	\end{multline*}
	
	By Lemma \ref{FcomprisEntre0et1} below and the inverse transform sampling, $F_\mu(F_\mu^{-1}(u))>0$ and $F_\mu(F_\mu^{-1}(u)_-)<1$, $du$-almost everywhere on $(0,1)$, hence
	\[
	\int_{\R\times\R}h(x,y)\,\mu(dx)\,m(x,dy)=\int_{\R\times(0,1)}h(F_\mu^{-1}(u),y)\,\widetilde m(u,dy)\,du.
	\]

\end{proof}

\begin{proof}[Proof of Proposition \ref{muMcouplageMartingale}] Let us show that $M^Q$ defines a coupling between $\mu$ and $\nu$. Let $h:\R\to\R$ be a measurable and nonnegative (or bounded) function. We want to show that
	\[
	\int_{\R\times\R}h(y)\,\mu(dx)m^Q(x,dy)=\int_\R h(y)\,\nu(dy),
	\]
	
	which by Lemma \ref{lienPetitmGrandM} and the inverse transform sampling is equivalent to
	\begin{equation}\label{equivalentPreuveMartingale}
	\int_0^1\int_\R h(y)\,\widetilde m^Q(u,dy)\,du=\int_0^1h(F_\nu^{-1}(u))\,du.
	\end{equation}
	
Thanks to Lemma \ref{Petitmdupp}, we get for $du$-almost all $u\in(0,1)$,
	\begin{align*}
	&\int_\R h(y)\,\widetilde m^Q(u,dy)\\
	&=\int_{(0,1)}\left(1-\frac{F_\mu^{-1}(u)-F_\nu^{-1}(u)}{F_\nu^{-1}(v)-F_\nu^{-1}(u)}\right)h(F_\nu^{-1}(u))(\pi^Q_+(u,dv)\1_{\{F_\mu^{-1}(u)>F_\nu^{-1}(u)\}}+\pi^Q_-(u,dv)\1_{\{F_\mu^{-1}(u)<F_\nu^{-1}(u)\}})\\
	&\phantom{=}+\int_{(0,1)}\left(\frac{F_\mu^{-1}(u)-F_\nu^{-1}(u)}{F_\nu^{-1}(v)-F_\nu^{-1}(u)}\right)h(F_\nu^{-1}(v))(\pi^Q_+(u,dv)\1_{\{F_\mu^{-1}(u)>F_\nu^{-1}(u)\}}+\pi^Q_-(u,dv)\1_{\{F_\mu^{-1}(u)<F_\nu^{-1}(u)\}})\\
	&\phantom{=}+h(F_\nu^{-1}(u))\1_{\{F_\mu^{-1}(u)=F_\nu^{-1}(u)\}}\\
	&=h(F_\nu^{-1}(u))+\int_{(0,1)}\frac{(F_\mu^{-1}-F_\nu^{-1})^+(u)}{F_\nu^{-1}(v)-F_\nu^{-1}(u)}(h(F_\nu^{-1}(v))-h(F_\nu^{-1}(u)))\,\pi^Q_+(u,dv)\\
	&\phantom{=}+\int_{(0,1)}\frac{(F_\mu^{-1}-F_\nu^{-1})^-(u)}{F_\nu^{-1}(u)-F_\nu^{-1}(v)}(h(F_\nu^{-1}(v))-h(F_\nu^{-1}(u)))\,\pi^Q_-(u,dv).\numberthis\label{calculIntegralehMtilde}
	\end{align*}
	
	Since
	\begin{align*}
	&\int_{(0,1)^2}\frac{(F_\mu^{-1}-F_\nu^{-1})^+(u)}{F_\nu^{-1}(v)-F_\nu^{-1}(u)}(h(F_\nu^{-1}(v)-h(F_\nu^{-1}(u)))\,\pi^Q_+(u,dv)\,du\\
	&=\gamma\int_{(0,1)^2}\frac{h(F_\nu^{-1}(v))-h(F_\nu^{-1}(u))}{F_\nu^{-1}(v)-F_\nu^{-1}(u)}Q(du,dv)\\
	&=\int_{(0,1)^2}\frac{h(F_\nu^{-1}(v))-h(F_\nu^{-1}(u))}{F_\nu^{-1}(v)-F_\nu^{-1}(u)}(F_\mu^{-1}-F_\nu^{-1})^-(v)\,\pi^Q_-(v,du)\,dv\\
	&=-\int_{(0,1)^2}\frac{(F_\mu^{-1}-F_\nu^{-1})^-(u)}{F_\nu^{-1}(u)-F_\nu^{-1}(v)}(h(F_\nu^{-1}(v))-h(F_\nu^{-1}(u)))\,\pi^Q_-(u,dv)\,du,
	\end{align*}
	we deduce that $\int_0^1\int_\R h(y)\,\widetilde m^Q(u,dy)\,du=\int_0^1h(F_\nu^{-1}(u))\,du$. We conclude that $M^Q$ is a coupling between $\mu$ and $\nu$. In particular for $h:y\mapsto\vert y\vert$, using the inverse transform sampling, we have
	\[
	\int_0^1\int_\R\vert y\vert\,\widetilde m^Q(u,dy)\,du=\int_0^1\vert F_\nu^{-1}(u)\vert\,du=\int_\R\vert y\vert\,\nu(dy)<+\infty.
	\]
	
	So $\int_\R y\,\widetilde m^Q(u,dy)$ is well defined $du$-almost everywhere on $(0,1)$.
	
	Let us show now that $M^Q$ defines a martingale coupling between $\mu$ and $\nu$. By Lemma \ref{Petitmdupp}, for $du$-almost all $u\in\mathcal U_+$,
	\begin{align}\label{calculMartingaleUmoins}
	\int_\R y\,\widetilde m^Q(u,dy)&=\int_{(0,1)}\left(1-\frac{F_\mu^{-1}(u)-F_\nu^{-1}(u)}{F_\nu^{-1}(v)-F_\nu^{-1}(u)}\right)F_\nu^{-1}(u)\,\pi^Q_+(u,dv)\nonumber\\
	&+\int_{(0,1)}\left(\frac{F_\mu^{-1}(u)-F_\nu^{-1}(u)}{F_\nu^{-1}(v)-F_\nu^{-1}(u)}\right)F_\nu^{-1}(v)\,\pi^Q_+(u,dv)\nonumber\\
	&=\int_{(0,1)}(F_\nu^{-1}(u)+F_\mu^{-1}(u)-F_\nu^{-1}(u))\,\pi^Q_+(u,dv)\nonumber\\
	&=F_\mu^{-1}(u).
	\end{align}
	
	In the same way, for $du$-almost all $u\in\mathcal U_-$,
	\begin{align}\label{calculMartingaleUplus}
	\int_\R y\,\widetilde m^Q(u,dy)&=F_\mu^{-1}(u).
	\end{align}
	
	Else if $u\in\mathcal U_0$, then by definition of $\widetilde m^Q(u,dy)$,
	\[
	\int_\R y\,\widetilde m^Q(u,dy)=F_\nu^{-1}(u)=F_\mu^{-1}(u),
	\]
	
	so for $du$-almost all $u\in(0,1)$, $\int_\R y\,\widetilde m^Q(u,dy)=F_\mu^{-1}(u)$.
	
	Let $h:\R\to\R$ be a measurable and bounded function. By Lemma \ref{lienPetitmGrandM},
	\[
	\int_{\R\times\R}h(x)(y-x)\,\mu(dx)\,m^Q(x,dy)=\int_0^1h(F_\mu^{-1}(u))\left(\int_\R(y-F_\mu^{-1}(u))\,\widetilde m^Q(u,dy)\right)\,du=0.
	\]
	
	%
	%
	
	So $\mu(dx)\,m^Q(x,dy)$ is a martingale coupling between $\mu$ and $\nu$.
\end{proof}

According to Theorem A.4 \cite{BeiglbockJuillet}, there exist $N\in\N^*\cup\{+\infty\}$ and a sequence of disjoint open intervals $((\underline t_n,\overline t_n))_{1\le n\le N}$ such that
\begin{equation}\label{defComposantesIrreductibles}
\left\{t\in\R\mid\int_{-\infty}^tF_\mu(x)\,dx<\int_{-\infty}^tF_\nu(x)\,dx\right\}=\bigcup_{n=1}^N(\underline t_n,\overline t_n).
\end{equation}

These intervals are called the irreducible components of the couple $(\mu,\nu)$. Moreover, there exists a unique decomposition of probability measures $(\mu_n,\nu_n)_{1\le n\le N}$, such that the choice of any martingale coupling $M$ between $\mu$ and $\nu$ reduces to the choice of a sequence of martingale couplings $(M_n)_{1\le n\le N}$. More precisely, for all $1\le n\le N$, 
\begin{equation}\label{comparaisonRepartitionsComposantesIrreductibles}
F_\mu(\underline t_n)\le F_\nu(\underline t_n)\le F_\nu((\overline t_n)_-)\le F_\mu((\overline t_n)_-),\quad F_\mu(\underline t_n)<F_\mu((\overline t_n)_-),
\end{equation}
and $\mu_n$ and $\nu_n$ are given by
\begin{equation}\label{defNuNComposantesIrreductibles}
\left\{
\begin{array}{rcl}
\mu_n(dx)&=&\frac{1}{F_\mu((\overline t_n)_-)-F_\mu(\underline t_n)}\1_{(\underline t_n,\overline t_n)}(x)\,\mu(dx);\\
\nu_n(dy)&=&\frac{1}{F_\mu((\overline t_n)_-)-F_\mu(\underline t_n)}\left(\1_{(\underline t_n,\overline t_n)}(y)\,\nu(dy)+(F_\nu(\underline t_n)-F_\mu(\underline t_n))\,\delta_{\underline t_n}(dy)+(F_\mu((\overline t_n)_-)-F_\nu((\overline t_n)_-))\,\delta_{\overline t_n}(dy)\right).
\end{array}
\right.
\end{equation}

Then a probability measure $M$ on $\R^2$ is a martingale coupling between $\mu$ and $\nu$ if and only if there exists a sequence $(M_n)_{1\le n\le N}$ such that for all $1\le n\le N$, $M_n$ is a martingale coupling between $\mu_n$ and $\nu_n$ and
	\[
	M(dx,dy)=\1_{\R\backslash\bigcup_{n=1}^N(\underline t_n,\overline t_n)}(x)\,\mu(dx)\,\delta_x(dy)+\sum_{n=1}^N\mu((\underline t_n,\overline t_n))\,M_n(dx,dy).
	\]
	
	We can establish a strong connection between the support of any probability measure $Q\in\mathcal Q$ and the irreducible components of $(\mu,\nu)$.

\begin{prop2}\label{ComposantesIrreductiblesSupportMesuresQ} Let $\mu,\nu\in\mathcal P_1(\R)$ be such that $\mu<_{cx}\nu$. Let $(\underline t_n,\overline t_n)_{1\le n\le N}$ denote the irreducible components of $(\mu,\nu)$. Then for all $Q\in\mathcal Q$, we have
	\[
	Q\left(\bigcup_{1\le n\le N}\left(F_\mu(\underline t_n),F_\mu((\overline t_n)_-)\right)^2\right)=1.
	\]
\end{prop2}
\begin{proof} Let $Q\in\mathcal Q$. By Lemma A.8 \cite{ApproxMOTJourdain2}, we have
	\[
	\mathcal W:=\bigcup_{n=1}^N\left(F_\mu(\underline t_n),F_\mu((\overline t_n)_-)\right)=\left\{u\in(0,1)\mid\int_0^uF_\mu^{-1}(v)\,dv>\int_0^uF_\nu^{-1}(v)\,dv\right\}.
	\]
	
	Let $u\in(0,1)$ be such that $F_\mu^{-1}(u)>F_\nu^{-1}(u)$, that is $u\in\mathcal U_+$. Since $\mu\le_{cx}\nu$, according to the necessary condition of Theorem 3.A.5 Chapter 3 \cite{ShakedShanthikumar} (see also Remark \ref{rqPreuveConstructiveStrassen} for a proof), for all $q\in[0,1]$, $\int_0^qF_\mu^{-1}(v)\,dv\ge\int_0^qF_\nu^{-1}(v)\,dv$. By left-continuity of $F_\mu^{-1}$ and $F_\nu^{-1}$, we deduce that $\int_0^uF_\mu^{-1}(v)\,dv>\int_0^uF_\nu^{-1}(v)\,dv$, that is $u\in\mathcal W$. So $\mathcal U_+\subset\mathcal W$.
	
	Let $1\le n\le N$. Then $M^Q$ transports $(\underline t_n,\overline t_n)$ to $[\underline t_n,\overline t_n]$, namely for $\mu(dx)$-almost all $x\in(\underline t_n,\overline t_n)$, $m^Q(x,[\underline t_n,\overline t_n])=1$. So using Lemma \ref{lienPetitmGrandM} for the last equality, we have
	\begin{align*}
	\int_{F_\mu(\underline t_n)}^{F_\mu((\overline t_n)_-)}du&=\mu((\underline t_n,\overline t_n))=\int_{\R}\1_{\{\underline t_n<x<\overline t_n\}}\mu(dx)\\
	&=\int_{\R^2}\1_{\{\underline t_n<x<\overline t_n\}}\1_{\{\underline t_n\le y\le\overline t_n\}}\,\mu(dx)\,m^Q(x,dy)\\
	&=\int_{(0,1)\times\R}\1_{\{\underline t_n<F_\mu^{-1}(u)<\overline t_n\}}\1_{\{\underline t_n\le y\le\overline t_n\}}\,du\,\widetilde m^Q(u,dy).
	\end{align*}
	
	Using Lemma \ref{FmoinsUnrondF}, one can easily see that for all $u\in(0,1)$, $\1_{\{F_\mu(\underline t_n)<u<F_\mu((\overline t_n)_-)\}}\le\1_{\{\underline t_n<F_\mu^{-1}(u)<\overline t_n\}}\le\1_{\{F_\mu(\underline t_n)<u\le F_\mu((\overline t_n)_-)\}}$. So	
	\begin{align*}
	\int_{F_\mu(\underline t_n)}^{F_\mu((\overline t_n)_-)}du&=\int_{(0,1)\times\R}\1_{\{F_\mu(\underline t_n)<u<F_\mu((\overline t_n)_-)\}}\1_{\{\underline t_n\le y\le\overline t_n\}}\,du\,\widetilde m^Q(u,dy)\\
	&=\int_{F_\mu(\underline t_n)}^{F_\mu((\overline t_n)_-)}\widetilde m^Q(u,[\underline t_n,\overline t_n])\,du.
	\end{align*}
	
	So for $du$-almost all $u\in(F_\mu(\underline t_n),F_\mu((\overline t_n)_-))$, $\widetilde m^Q(u,[\underline t_n,\overline t_n])=1$. By Lemma \ref{Petitmdupp}, $d\Psi_+(u)$-almost everywhere on $(F_\mu(\underline t_n),F_\mu((\overline t_n)_-))$,
	\begin{align*}
	1&=\pi^Q_+(u,\{v\in(0,1)\mid F_\nu^{-1}(v)\in[\underline t_n,\overline t_n]\})\\
	&=\pi^Q_+(u,\mathcal U_-\cap(u,1)\cap\{v\in(0,1)\mid F_\nu^{-1}(v)\in[\underline t_n,\overline t_n]\}),
	\end{align*}
	where the last equality derives from conditions $(ii)$ and $(iii)$ satisfied by $Q$. Let $u\in(F_\mu(\underline t_n),F_\mu((\overline t_n)_-))$. Let us check that
	\begin{equation}\label{EgaliteEnsembleComposantesIrreductibles}
	\mathcal U_-\cap(u,1)\cap\{v\in(0,1)\mid F_\nu^{-1}(v)\in[\underline t_n,\overline t_n]\}\subset\mathcal U_-\cap(u,1)\cap(F_\mu(\underline t_n),F_\mu((\overline t_n)_-)].
	\end{equation}
	
	Let $v\in(0,1)$ be such that $F_\nu^{-1}(v)\in[\underline t_n,\overline t_n]$. First of all, if $v>u$ then $v>F_\mu(\underline t_n)$. Second, if $v>F_\mu((\overline t_n)_-)$, then according to \eqref{comparaisonRepartitionsComposantesIrreductibles} and Lemma \ref{FmoinsUnrondF}, we have $F_\nu((\overline t_n)_-)\le F_\mu((\overline t_n)_-)<v\le F_\nu(\overline t_n)$. In that case, if $v\le F_\mu(\overline t_n)$, then $v\in(F_\nu((\overline t_n)_-),F_\nu(\overline t_n)]\cap(F_\mu((\overline t_n)_-),F_\mu(\overline t_n)]$, so $F_\nu^{-1}(v)=F_\mu^{-1}(v)=\overline t_n$ and $v\in\mathcal U_0$. Else if $v>F_\mu(\overline t_n)$, then $v\in(F_\mu(\overline t_n),F_\nu(\overline t_n)]$ so $F_\nu^{-1}(v)\le\overline t_n< F_\mu^{-1}(v)$ and $v\in\mathcal U_+$. This proves \eqref{EgaliteEnsembleComposantesIrreductibles}.
	
	Using conditions $(ii)$ and $(iii)$ satisfied by $Q$ again and the fact that the second marginal of $Q$ has a density, we get that $d\Psi_+(u)$-almost everywhere on $(F_\mu(\underline t_n),F_\mu((\overline t_n)_-))$,
	\begin{align*}
	1&=\pi^Q_+(u,\mathcal U_-\cap(u,1)\cap\{v\in(0,1)\mid F_\nu^{-1}(v)\in[\underline t_n,\overline t_n]\})\\
	&\le\pi^Q_+(u,\mathcal U_-\cap(u,1)\cap(F_\mu(\underline t_n),F_\mu((\overline t_n)_-)])\\
	&=\pi^Q_+(u,(F_\mu(\underline t_n),F_\mu((\overline t_n)_-))).
	\end{align*}
	
	We deduce that
	\begin{align*}
	Q\left(\bigcup_{1\le n\le N}\left(F_\mu(\underline t_n),F_\mu((\overline t_n)_-)\right)^2\right)&=\sum_{n=1}^NQ\left(\left(F_\mu(\underline t_n),F_\mu((\overline t_n)_-)\right)^2\right)\\
	&=\frac1\gamma\sum_{n=1}^N\int_{F_\mu(\underline t_n)}^{F_\mu((\overline t_n)_-)}d\Psi_+(u)\,\pi^Q_+(u,(F_\mu(\underline t_n),F_\mu((\overline t_n)_-)))\\
	&=\frac1\gamma\sum_{n=1}^N\int_{F_\mu(\underline t_n)}^{F_\mu((\overline t_n)_-)}d\Psi_+(u)=\frac1\gamma\,d\Psi_+(\mathcal W)\\
	&\ge\frac1\gamma\,d\Psi_+(\mathcal U_+)\\
	&=1,
	\end{align*}
	where we used the fact that $\mathcal U_+\subset\mathcal W$ for the inequality.
\end{proof}

The next proposition clarifies the structure of the set of martingale couplings deriving from $\mathcal Q$ and states a linearity property of the map $Q\in\mathcal Q\mapsto M^Q$. In particular, it ensures that the set of martingale couplings deriving from $\mathcal Q$ is either a singleton, or uncountably infinite.

\begin{prop2}\label{couplagesQconvexes} Let $\mu,\nu\in\mathcal P_1(\R)$ be such that $\mu<_{cx}\nu$. Then for all $Q,Q'\in\mathcal Q$ and $\lambda\in[0,1]$,
	\[
	M^{\lambda Q+(1-\lambda)Q'}=\lambda M^Q+(1-\lambda)M^{Q'}.
	\]
	
	In particular, the set $\{M^Q\mid Q\in\mathcal Q\}$ is convex.
\end{prop2}
\begin{proof} Let $Q,Q'\in\mathcal Q$ and let $\lambda\in[0,1]$. It is straightforward that for $du$-almost all $u\in\mathcal U_+$ and $dv$-almost all $v\in\mathcal U_-$,
	\begin{align*}
	\pi^{\lambda Q+(1-\lambda)Q'}_+(u,dy)&=\lambda\pi^Q_+(u,dy)+(1-\lambda)\pi^{Q'}_+(u,dy);\\
	\pi^{\lambda Q+(1-\lambda)Q'}_-(v,dy)&=\lambda\pi^Q_-(v,dy)+(1-\lambda)\pi^{Q'}_-(v,dy).
	\end{align*}
	
	Using Lemma \ref{Petitmdupp}, we get that for $du$-almost all $u\in(0,1)$,
	\[
	\widetilde m^{\lambda Q+(1-\lambda)Q'}(u,dy)=\lambda\widetilde m^Q(u,dy)+(1-\lambda)\widetilde m^{Q'}(u,dy).
	\]
	
	Let $h:\R^2\to\R$ be a measurable and bounded function. By Lemma \ref{lienPetitmGrandM},
	\begin{align*}
	&\int_{\R\times\R}h(x,y)\,M^{\lambda Q+(1-\lambda)Q'}(dx,dy)\\
	&=\int_{\R\times\R}h(x,y)\,\mu(dx)\,m^{\lambda Q+(1-\lambda)Q'}(x,dy)=\int_0^1\left(\int_\R h(F_\mu^{-1}(u),y)\,\widetilde m^{\lambda Q+(1-\lambda)Q'}(u,dy)\right)\,du\\
	&=\lambda\int_0^1\left(\int_\R h(F_\mu^{-1}(u),y)\,\widetilde m^Q(u,dy)\right)\,du+(1-\lambda)\int_0^1\left(\int_\R h(F_\mu^{-1}(u),y)\,\widetilde m^{Q'}(u,dy)\right)\,du\\
	&=\lambda\int_{\R\times\R}h(x,y)\,\mu(dx)\,m^Q(x,dy)+(1-\lambda)\int_{\R\times\R}h(x,y)\,\mu(dx)\,m^{Q'}(x,dy)\\
	&=\int_{\R\times\R}h(x,y)\,(\lambda M^Q+(1-\lambda)M^{Q'})(dx,dy).
	\end{align*}
	
	So $M^{\lambda Q+(1-\lambda)Q'}=\lambda M^Q+(1-\lambda)M^{Q'}$.
\end{proof}

We deduce that if $Q,Q'\in\mathcal Q$ are such that $M^Q\neq M^{Q'}$, then there exists a whole segment of martingale couplings between $\mu$ and $\nu$, all parametrised by $\mathcal Q$. More details are given in Section \ref{subsec:InfiniteAmountMartingaleCouplings}. Let us complete this section by revisiting the example given in Section \ref{sousSec:aSimpleExample}.

\begin{example} Suppose now that $\mu,\nu\in\mathcal P_1(\R)$ are symmetric with common mean $\alpha\in\R$, that is $(x-\alpha)_\sharp\mu(dx)=(\alpha-x)_\sharp\mu(dx)$ and $(y-\alpha)_\sharp\nu(dy)=(\alpha-y)_\sharp\nu(dy)$ where $\sharp$ denotes the pushforward operation. Suppose in addition that their respective quantile functions satisfy $F_\mu^{-1}\ge F_\nu^{-1}$ on $(0,1/2]$ and $F_\mu^{-1}\le F_\nu^{-1}$ on $(1/2,1)$. We saw in Section \ref{sousSec:aSimpleExample} that when $U$ is a random variable uniformly distributed on $[0,1]$ and $Z$ is given by \eqref{defCouplageCasSymetrique}, $(F_\mu^{-1}(U),Z)$ is an explicit coupling between $\mu$ and $\nu$ in the case $\alpha=0$. Let us show here that this coupling is in fact associated to a particular element of $\mathcal Q$. According to Lemma \ref{caracterisationLoiSymetriqueFonctionQuantile} below, we have $F_\mu^{-1}(u)=2\alpha-F_\mu^{-1}(1-u)$ and $F_\nu^{-1}(u)=2\alpha-F_\nu^{-1}(1-u)$ for $du$-almost all $u\in(0,1)$, which is helpful in order to see that the probability measure $Q_2$ defined on $(0,1)^2$ by
\begin{equation}
\label{exempleMesureQ2}
Q_2(du,dv)=\frac1\gamma(F_\mu^{-1}-F_\nu^{-1})^+(u)\,du\,\delta_{1-u}(dv)
\end{equation}
is an element of $\mathcal Q$ (in particular to check that it satisfies $(ii)$). For that element $Q_2$, using \eqref{defPetitm}, Lemma \ref{Petitmdupp} and Lemma \ref{caracterisationLoiSymetriqueFonctionQuantile}, we have for $du$-almost all $u\in\mathcal U_+\cup\mathcal U_-$,
\begin{equation}\label{mTildeCasSymetrique}
\widetilde m^{Q_2}(u,dy)=\frac{F_\mu^{-1}(u)+F_\nu^{-1}(u)-2\alpha}{2(F_\nu^{-1}(u)-\alpha)}\delta_{F_\nu^{-1}(u)}(dy)+\frac{F_\nu^{-1}(u)-F_\mu^{-1}(u)}{2(F_\nu^{-1}(u)-\alpha)}\delta_{2\alpha-F_\nu^{-1}(u)}(dy),
\end{equation}
and $\widetilde m^{Q_2}(u,dy)=\delta_{F_\nu^{-1}(u)}(dy)$ if $u\in\mathcal U_0$. Let $u\in(0,1)$. If $F_\mu^{-1}(u)=F_\nu^{-1}(u)\neq\alpha$, then $\delta_{F_\nu^{-1}(u)}(dy)$ coincides with the right-hand side of \eqref{mTildeCasSymetrique}. Furthermore if $F_\nu^{-1}(u)=\alpha$, since $\alpha\ge F_\mu^{-1}(u)\ge F_\nu^{-1}(u)$ or $\alpha\le F_\mu^{-1}(u)\le F_\nu^{-1}(u)$ by an easy generalisation of \eqref{inegaliteCasSymetrique}, then $F_\mu^{-1}(u)=\alpha$. Therefore \eqref{mTildeCasSymetrique} holds for $du$-almost all $u\in(0,1)$ such that $F_\nu^{-1}(u)\neq\alpha$ and $\widetilde m^{Q_2}(u,dy)=\delta_{F_\nu^{-1}(u)}(dy)$ for $du$-almost all $u\in(0,1)$ such that $F_\nu^{-1}(u)=\alpha$.


Let $U$ and $V$ be two independent random variables uniformly distributed on $(0,1)$ and let $Y$ be defined as in \eqref{defCouplageCasSymetrique} but with the mean $\alpha$ taken into account, that is
\[
Y=F_\nu^{-1}(U)\1_{\{F_\nu^{-1}(U)\neq \alpha,V\le\frac{F_\mu^{-1}(U)+F_\nu^{-1}(U)-2\alpha}{2(F_\nu^{-1}(U)-\alpha)}\}}+(2\alpha-F_\nu^{-1}(U))\1_{\{F_\nu^{-1}(U)\neq \alpha,V>\frac{F_\mu^{-1}(U)+F_\nu^{-1}(U)-2\alpha}{2(F_\nu^{-1}(U)-\alpha)}\}} +\alpha\1_{\{F_\nu^{-1}(U)=\alpha\}}.
\]

Then $(U,Y)$ is distributed according to $\1_{(0,1)}(u)\,du\,\widetilde m^{Q_2}(u,dy)$. By Lemma \ref{lienPetitmGrandM}, $(F_\mu^{-1}(U),Y)$ is distributed according to $\mu(dx)\,m(x,dy)$.

\end{example}

\subsection{Optimality property}

Let $\mu,\nu\in\mathcal P_1(\R)$. It is well known that $F_\nu^{-1}$ is constant on the jumps of $F_\mu$, that is $F_\nu^{-1}$ is constant on the intervals of the form $(F_\mu(x_-),F_\mu(x)]$, iff the comonotonous coupling between $\mu$ and $\nu$ is concentrated on the graph of a map $T:\R\to\R$, and then 
\begin{equation}\label{defTransportMonge}
T=F_\nu^{-1}\circ F_\mu.
\end{equation}

We will refer to $T$ as the Monge transport map.

\begin{prop2}\label{minimisationTransportMonge} Let $\mu,\nu\in\mathcal P_1(\R)$ be such that $\mu<_{cx}\nu$. Suppose in addition that $F_\nu^{-1}$ is constant on the intervals of the form $(F_\mu(x_-),F_\mu(x)]$. Let $T$ be the Monge transport map. Let $Q\in\mathcal Q$. Then
	\[
	\inf_{M\in\Pi^{\mathrm M}(\mu,\nu)}\int_{\R\times\R}\vert y-T(x)\vert\,M(dx,dy)=\int_{\R\times\R}\vert y-T(x)\vert\,M^Q(dx,dy)=W_1(\mu,\nu).
	\]
\end{prop2}
\begin{proof} This is a particular case of Proposition \ref{mMinimiseCritere} below. Indeed, let $M(dx,dy)=\mu(dx)\,m(x,dy)$ be a martingale coupling between $\mu$ and $\nu$. Let $(\widetilde m(u,dy))_{u\in(0,1)}$ be the kernel defined for all $u\in(0,1)$ by $\widetilde m(u,dy)=m(F_\mu^{-1}(u),dy)$. Using the inverse transform sampling, we have
	\begin{align*}
	\int_{\R\times\R}\vert y-T(x)\vert\,\mu(dx)\,m(x,dy)&=\int_{(0,1)\times\R}\vert y-T(F_\mu^{-1}(u))\vert\,du\,m(F_\mu^{-1}(u),dy)\\
	&=\int_{(0,1)\times\R}\vert y-F_\nu^{-1}(F_\mu((F_\mu^{-1}(u))))\vert\,du\,\widetilde m(u,dy),
	\end{align*}
	where we used for the last equality that $T=F_\nu^{-1}\circ F_\mu$. Let $u\in(0,1)$. If there exists $x\in\R$ such that $u=F_\mu(x)$, then $F_\mu(F_\mu^{-1}(u))=F_\mu(F_\mu^{-1}(F_\mu(x)))=F_\mu(x)=u$, so $F_\nu^{-1}(F_\mu(F_\mu^{-1}(u)))=F_\nu^{-1}(u)$. Else there exists $x$ in the set of discontinuities of $F_\mu$ such that $F_\mu(x_-)\le u<F_\mu(x)$. In that case, if $u>F_\mu(x_-)$ then $x=F_\mu^{-1}(u)$, so $F_\nu^{-1}(F_\mu(F_\mu^{-1}(u)))=F_\nu^{-1}(F_\mu(x))=F_\nu^{-1}(u)$ since $F_\nu^{-1}$ is constant on the jumps of $F_\mu$. Hence 
	\begin{equation}\label{LemmeMongeTransportMap}
	du\textrm{-a.e. on }(0,1),\quad F_\nu^{-1}(F_\mu(F_\mu^{-1}(u)))=F_\nu^{-1}(u).
	\end{equation}
	
	We deduce that
	\[
	\int_{(0,1)\times\R}\vert y-F_\nu^{-1}(F_\mu((F_\mu^{-1}(u))))\vert\,du\,\widetilde m(u,dy)=\int_{(0,1)\times\R}\vert y-F_\nu^{-1}(u)\vert\,du\,\widetilde m(u,dy).
	\]
	
	With a similar reasoning, we have $\int_{\R\times\R}\vert y-T(x)\vert\,\mu(dx)\,m^Q(x,dy)=\int_{(0,1)\times\R}\vert y-F_\nu^{-1}(u)\vert\,du\,\widetilde m^Q(u,dy)$. Therefore, using Proposition \ref{mMinimiseCritere} combined with Remark \ref{mMinimiseCritereRemarque} below, we get that $\int_{\R\times\R}\vert y-T(x)\vert\,M(dx,dy)$ is minimised when $M=M^Q$, for which we have $\int_{\R\times\R}\vert y-T(x)\vert\,M^Q(dx,dy)=W_1(\mu,\nu)$.
\end{proof}

\subsection{Stability inequality}

We can now state our main result. In the minimisation of the cost function $(x,y)\mapsto\vert x-y\vert$ with respect to the couplings between $\mu$ and $\nu$, the addition of the martingale constraint does not cost more than a factor $2$.


\begin{thm2}\label{GrandMMinimiseCritere} For all $\mu,\nu\in\mathcal P_1(\R)$ such that $\mu<_{cx}\nu$ and for all $Q$ in the non-empty set $\mathcal Q$,
	\begin{equation}
	\label{inegaliteControleNouvelleFamille}
	\int_{\R\times\R}\vert x-y\vert\,M^Q(dx,dy)\le 2W_1(\mu,\nu).
	\end{equation}
	
	Consequently,
	\begin{equation}
	\label{inegaliteControle}
	\mathcal M_1(\mu,\nu)\le2W_1(\mu,\nu).
	\end{equation}
	
	Moreover, the constant $2$ is sharp.	
\end{thm2}

The proof of Theorem \ref{GrandMMinimiseCritere} relies on Proposition \ref{mMinimiseCritere} below. Note that since $\Pi^{\mathrm{M}}(\mu,\nu)\subset\Pi(\mu,\nu)$, we always have $W_1(\mu,\nu)\le\mathcal M_1(\mu,\nu)$. Moreover, the stability inequality \eqref{inegaliteControle} can be tensorised: it holds in greater dimension when the marginals are independent, as the next corollary states.

\begin{cor2}\label{TensorisationInegaliteStabilite} Let $d\in\N^*$ and $\mu_1,\cdots,\mu_d,\nu_1,\cdots,\nu_d\in\mathcal P_1(\R)$ be such that for all $1\le i\le d$, $\mu_i\le_{cx}\nu_i$. Let $\mu=\mu_1\otimes\cdots\otimes\mu_d$ and $\nu=\nu_1\otimes\cdots\otimes\nu_d$. Then $\mu\le_{cx}\nu$ and
	\[
	\mathcal M_1(\mu,\nu)\le2W_1(\mu,\nu),
	\]
	when $\R^d$ is endowed with the $L^1$-norm.
\end{cor2}
\begin{proof}[Proof of Corollary \ref{TensorisationInegaliteStabilite}] For all $1\le i\le d$, since $\mu_i\le_{cx}\nu_i$, Strassen's theorem or Proposition \ref{muMcouplageMartingale} and Corollary \ref{caracterisationQNonVide} ensure the existence of a martingale coupling $M_i(dx_i,dy_i)=\mu_i(dx_i)\,m_i(x_i,dy_i)$ between $\mu_i$ and $\nu_i$. Let then $M$ be the probability measure on $\R^d\times\R^d$ defined by $M(dx,dy)=\mu(dx)\,m_1(x_1,dy_1)\cdots\,m_d(x_d,dy_d)$. Then it is clear that $M$ is a martingale coupling between $\mu$ and $\nu$, which shows that $\mu\le_{cx}\nu$, and
	\[
	\mathcal M_1(\mu,\nu)\le\int_{\R^d\times\R^d}\vert x-y\vert\,M(dx,dy)=\sum_{i=1}^d\int_{\R^d\times\R^d}\vert x_i-y_i\vert\,M(dx,dy)=\sum_{i=1}^d\int_{\R\times\R}\vert x_i-y_i\vert\,M_i(dx_i,dy_i).
	\]
	
	For all $1\le i\le d$, let $\mathcal Q_i$ denote the set $\mathcal Q$ with respect to $\mu=\mu_i$ and $\nu=\nu_i$ and let $Q_i\in\mathcal Q_i$. Then for $M_1=M^{Q_1},\cdots,M_d=M^{Q_d}$, we deduce from Theorem \ref{GrandMMinimiseCritere} that
	\[
	\mathcal M_1(\mu,\nu)\le\sum_{i=1}^d\int_{\R\times\R}\vert x_i-y_i\vert\,M^{Q_i}(dx_i,dy_i)\le2\sum_{i=1}^dW_1(\mu_i,\nu_i).
	\]
	
	Let $P\in\Pi(\mu,\nu)$ be a coupling between $\mu$ and $\nu$. For all $1\le i\le d$, let $P_i$ be the marginals of $P$ with respect to the coordinates $i$ and $i+d$, so that $P_i$ is a coupling between $\mu_i$ and $\nu_i$. Then
	\[
	\sum_{i=1}^dW_1(\mu_i,\nu_i)\le\sum_{i=1}^d\int_{\R\times\R}\vert x_i-y_i\vert\,P_i(dx_i,dy_i)=\int_{\R^d\times\R^d}\sum_{i=1}^d\vert x_i-y_i\vert\,P(dx,dy)=\int_{\R^d\times\R^d}\vert x-y\vert\,P(dx,dy).
	\]
	
	Since the inequality above is true for any coupling $P$ between $\mu$ and $\nu$, we deduce that $\sum_{i=1}^dW_1(\mu_i,\nu_i)\le W_1(\mu,\nu)$, which proves the assertion.
\end{proof}

 In the following remarks, we first look in which case the minimiser of \eqref{inegaliteControle}, studied by Hobson and Klimmek \cite{HobsonKlimmek}, derives from $\mathcal Q$. Second, we see that the left-curtain martingale coupling introduced by Beiglböck and Juillet \cite{BeiglbockJuillet} does not always satisfy \eqref{inegaliteControle}.

\begin{rk} The optimal martingale coupling $M\in\Pi^{\mathrm{M}}(\mu,\nu)$ which minimises $\int_{\R\times\R}\vert x-y\vert\,M(dx,dy)$ was actually characterised by Hobson and Klimmek \cite{HobsonKlimmek} under the dispersion assumption that there exists a bounded interval $E$ of positive length such that $(\mu-\nu)^+(E^\complement)=(\nu-\mu)^+(E)=0$. They show that the optimal coupling $M^{HK}$ is unique. Moreover, in the simpler case where $\mu\wedge\nu=0$, if $a<b$ denote the endpoints of $E$, then there exist two nonincreasing functions $R:(0,1)\to(-\infty,a]$ and $S:(0,1)\to[b,+\infty)$ such that for all $u\in(0,1)$, denoting $\widetilde m^{HK}(u,dy)=m^{HK}(F_\mu^{-1}(u),dy)$ where $m^{HK}(x,dy)\,\mu(dx)=M^{HK}(dx,dy)$, one has
\[
\widetilde m^{HK}(u,dy)=\frac{S(u)-F_\mu^{-1}(u)}{S(u)-R(u)}\delta_{R(u)}(dy)+\frac{F_\mu^{-1}(u)-R(u)}{S(u)-R(u)}\delta_{S(u)}(dy).
\]

We can discuss in which case $M^{HK}$ derives from $\mathcal Q$. Suppose first that $F_\nu^{-1}$ takes at least three different values, that is there exist $u,v,w\in(0,1)$ such that $F_\nu^{-1}(u)<F_\nu^{-1}(v)<F_\nu^{-1}(w)$. By left-continuity of $F_\nu^{-1}$, there exists $\varepsilon>0$ such that $F_\nu^{-1}(u)<F_\nu^{-1}(v-\varepsilon)$ and $F_\nu^{-1}(v)<F_\nu^{-1}(w-\varepsilon)$. Let $I_1=(0,u]$, $I_2=(v-\varepsilon,v]$ and $I_3=(w-\varepsilon,1]$. Those three intervals are such that for all $s\in I_1$ (resp. $s\in I_2$) and $t\in I_2$ (resp. $t\in I_3$), we have $F_\nu^{-1}(s)<F_\nu^{-1}(t)$. Since $R$ is nonincreasing, if the graph of $R$ meets the graph of $F_\nu^{-1}$ on one of those three intervals, then they cannot meet on the two others. We can assert the same with the graph of $S$ since $S$ is nonincreasing as well. Therefore, there exists $k\in{\{1,2,3\}}$ such that the intersection of $F_\nu^{-1}(I_k)$ and $R(I_k)\cup S(I_k)$ is empty. In particular, for all $t\in I_k$, $\widetilde m^{HK}(t,\{F_\nu^{-1}(t)\})=0$. However, thanks to Lemma \ref{Petitmdupp}, we can see that for all $Q\in\mathcal Q$, the Markov kernel $\widetilde m^Q$ is such that $\widetilde m^Q(u,\{F_\nu^{-1}(u)\})>0$ for $du$-almost all $u\in(0,1)$. Therefore, $M^{HK}$ does not derive from $\mathcal Q$.

If $F_\nu^{-1}$ does not take more than two different values, that is if $\nu$ is reduced to two atoms at most, then there exists a unique martingale coupling between $\mu$ and $\nu$, so $M^{HK}$ derives of course from $\mathcal Q$.

Note that the maximisation problem $\sup_{M\in\Pi^{\mathrm{M}}(\mu,\nu)}\int_{\R\times\R}\vert x-y\vert\,M(dx,dy)$ is discussed by Hobson and Neuberger \cite{HobsonNeuberger}.
\end{rk}
\begin{example} For instance, if $\mu(dx)=\frac12\1_{[-1,1]}(x)\,dx$ and $\nu(dx)=\frac12(\1_{[-2,-1)}+\1_{(1,2]})(x)\,dx$, then (see Example 6.1 \cite{HobsonKlimmek} for an equivalent calculation)
\[
m^{HK}(x,dy)=\left(\frac12-\frac{3x}{2\sqrt{12-3x^2}}\right)\delta_{-\frac12(x+\sqrt{12-3x^2})}(dy)+\left(\frac12+\frac{3x}{2\sqrt{12-3x^2}}\right)\delta_{\frac12(-x+\sqrt{12-3x^2})}(dy),
\]
which satisfies $m^{HK}(x,\{F_\nu^{-1}(F_\mu(x))\})>0$ iff $x\in\{(3-\sqrt{33})/6,(\sqrt{33}-3)/6\}$. On the other hand, for all $Q\in\mathcal Q$, the Markov kernel $m^Q$ is such that $m^Q(x,\{F_\nu^{-1}(F_\mu(x))\})>0$ for $dx$-almost all $x\in(-1,1)$.
\end{example}

\begin{rk} We investigate an example where the left-curtain martingale coupling introduced by Beiglböck and Juillet \cite{BeiglbockJuillet} does not satisfy \eqref{inegaliteControleNouvelleFamille}. Let $\mu\in\mathcal P_1(\R)$ be with density $f_\mu$ and let $u>1$ and $d>0$. Let $M^{LC}$ be defined by
	\[
	M^{LC}(dx,dy)=\mu(dx)\,\left(\1_{\{x>0\}}\left(q\,\delta_{ux}(dy)+(1-q)\,\delta_{-dx}(dy)\right)+\1_{\{x\le0\}}\delta_{x}(dy)\right),
	\]
	where $q=\frac{1+d}{u+d}$. Let $\nu$ denote the second marginal of $M^{LC}$. So $\nu$ has density $f_\nu$ defined by $f_\nu(x)=\frac quf_\mu(\frac xu)$ for all $x>0$ and $f_\nu(x)=f_\mu(x)+\frac{1-q}{d}f_\mu(-\frac{x}{d})$ for all $x\le0$. Then $M^{LC}$ is the left-curtain martingale coupling between $\mu$ and $\nu$. One can easily compute $\int_{\R^d}\vert y-x\vert\,M^{LC}(dx,dy)=2\frac{(u-1)(1+d)}{u+d}\int_{\R_+}xf_\mu(x)\,dx$. On the other hand, $W_1(\mu,\nu)=\int_\R\vert F_\mu(t)-F_\nu(t)\vert\,dt$ (see for instance Remark 2.19 (iii) Chapter 2 \cite{VillaniOT2}). From the relation between $f_\nu$ and $f_\mu$, one can deduce that for all $x\ge0$, $F_\nu(x)=1-q+qF_\mu(x/u)$, and for all $x\le0$, $F_\nu(x)=F_\mu(x)+(1-q)\overline F_\mu(-x/d)$, where $\overline F_\mu:x\mapsto\mu((x,+\infty))=1-F_\mu(x)$. Using $\vert x\vert=x+2x^-$, we have
	\begin{align*}
	W_1(\mu,\nu)&=\int_{\R_-}(1-q)\overline F_\mu(-x/d)\,dx+\int_{\R_+}\vert\overline F_\mu(x)-q\overline F_\mu(x/u)\vert\,dx\\
	&=\int_{\R_-}(1-q)\overline F_\mu(-x/d)\,dx+\int_{\R_+}(\overline F_\mu(x)-q\overline F_\mu(x/u))\,dx+2\int_{\R_+}(\overline F_\mu(x)-q\overline F_\mu(x/u))^-\,dx\\
	&=d(1-q)\int_{\R+}xf_\mu(x)\,dx+(1-qu)\int_{\R_+}xf_\mu(x)\,dx+2\int_{\R_+}(\overline F_\mu(x)-q\overline F_\mu(x/u))^-\,dx\\
	&=2\int_{\R_+}(\overline F_\mu(x)-q\overline F_\mu(x/u))^-\,dx.
	\end{align*}
	
	Then $M^{LC}$ satisfies \eqref{inegaliteControleNouvelleFamille} iff
	\begin{equation}\label{contreExempleLCInegaliteControle}
	\frac{(u-1)(1+d)}{u+d}\int_{\R_+}xf_\mu(x)\,dx\le2\int_{\R_+}\left(\bar F_\mu(x)-q\bar F_\mu(x/u)\right)^-\,dx.
	\end{equation}

	%
\end{rk}

The next example illustrates a contradiction of \eqref{contreExempleLCInegaliteControle} and therefore \eqref{inegaliteControleNouvelleFamille} for $M^{LC}$.

\begin{example} Let $\mu(dx)=\lambda\exp(-\lambda x)\1_{\{x>0\}}\,dx$, where $\lambda>0$, and let $\nu$ be the probability distribution with density $f_\nu$ given by $f_\nu(x)=\frac quf_\mu(x/u)$ for $x>0$ and $f_\nu(x)=\frac{1-q}{d}f_\mu(-x/d)$ for $x\le0$. Then for all $x\in\R$, $\overline F_\mu(x)=\exp(-\lambda x)$, and \eqref{contreExempleLCInegaliteControle} is equivalent to 
	\begin{align*}
	&\frac{(u-1)(1+d)}{u+d}\times\frac1\lambda>2\int_{\R_+}\left(\exp(-\lambda x)-q\exp(-\lambda x/u)\right)^-\,dx=2\int_{\frac{\ln q}{\lambda(\frac1u-1)}}^{+\infty}(q\exp(-\lambda x/u)-\exp(-\lambda x))\,dx\\
	&\iff\frac{(u-1)q}{\lambda}>2\left(\frac{qu}{\lambda}\exp\left(-\frac{\ln q}{1-u}\right)-\frac1\lambda\exp\left(-\frac{\ln q}{\frac 1u-1}\right)\right)=2\frac q\lambda(u-1)q^{-1/(1-u)}\\
	&\iff2^{1-u}>q=\frac{1+d}{u+d},
	\end{align*}
	
	which can be satisfied for example with $u=\frac54$ and $d=\frac14$. Note that this condition does not depend on the value of $\lambda$. Therefore, the left-curtain martingale coupling
	\[
	M^{LC}(dx,dy)=\lambda\exp(-\lambda x)\1_{\{x>0\}}\,dx\left(\frac56\,\delta_{\frac{5x}{4}}(dy)+\frac16\,\delta_{-\frac{x}{4}}(dy)\right)
	\]
	does not satisfy \eqref{inegaliteControleNouvelleFamille}, for any $\lambda>0$.
\end{example}

\begin{prop2}\label{mMinimiseCritere} Let $\mu,\nu\in\mathcal P_1(\R)$ be such that $\mu<_{cx}\nu$. Let $Q\in\mathcal Q$. Then the Markov kernel $(\widetilde m^Q(u,dy))_{u\in(0,1)}$ minimises
	\[
	\int_0^1\int_\R\vert F_\nu^{-1}(u)-y\vert\,\widetilde m(u,dy)\,du
	\]
	among all Markov kernels $(\widetilde m(u,dy))_{u\in(0,1)}$ such that
	\begin{equation}\label{defPetitCouplage}
	\left\{
	\begin{array}{rcl}
	\int_{u\in(0,1)}\widetilde m(u,dy)\,du&=&\nu(dy)\\
	\int_\R\vert y\vert\,\widetilde m(u,dy)<+\infty\quad\textrm{and}\quad\int_\R y\,\widetilde m(u,dy)&=&F_\mu^{-1}(u)\text{, $du$-almost everywhere on $(0,1)$}
	\end{array}
	\right.
	\end{equation}
	
	Moreover, $\int_0^1\int_\R\vert F_\nu^{-1}(u)-y\vert\,\widetilde m^Q(u,dy)\,du=W_1(\mu,\nu)$.
\end{prop2}
\begin{rk}\label{mMinimiseCritereRemarque} If $(\widetilde m(u,dy))_{u\in(0,1)}$ is a Markov kernel satisfying \eqref{defPetitCouplage}, then using Lemma \ref{lienPetitmGrandM}, we get that $\mu(dx)\,m(x,dy)$ with $(m(x,dy))_{x\in\R}$ denoting the Markov kernel given by \eqref{defGrandM} is a martingale coupling between $\mu$ and $\nu$.
	
	Conversely, if $\mu(dx)\,m(x,dy)$ is a martingale coupling between $\mu$ and $\nu$, then using the inverse transform sampling, we get that the Markov kernel $(m(F_\mu^{-1}(u),dy))_{u\in(0,1)}$ satisfies \eqref{defPetitCouplage}.	
	%
\end{rk}
\begin{rk} The martingale couplings parametrised by $Q\in\mathcal Q$ are not the only ones to minimise $\int_0^1\int_\R\vert F_\nu^{-1}(u)-y\vert\,\widetilde m(u,dy)\,du$ among all Markov kernels $(\widetilde m(u,dy))_{u\in(0,1)}$ which satisfy \eqref{defPetitCouplage}. Indeed, let $\mu=\frac12\delta_{-1}+\frac12\delta_1$, $\nu=\frac18\delta_{-8}+\frac14\delta_{-6}+\frac58\delta_4$ and
	\[
	M=\frac18\left(2\delta_{(-1,-6)}+2\delta_{(-1,4)}+\delta_{(1,-8)}+3\delta_{(1,4)}\right).
	\]
	
	 For $m(-1,dy)=\frac12\delta_{-6}+\frac12\delta_4$ and $m(1,dy)=\frac14\delta_{-8}+\frac34\delta_4$, we have $M(dx,dy)=\mu(dx)\,m(x,dy)$. Let $(\widetilde m(u,dy))_{u\in(0,1)}$ be defined for all $u\in(0,1)$ by $\widetilde m(u,dy)=m(F_\mu^{-1}(u),dy)$. It is easy to see that $M$ is a martingale coupling between $\mu$ and $\nu$, so $(\widetilde m(u,dy))_{u\in(0,1)}$ satisfies \eqref{defPetitCouplage}. For all $u\in(0,1)$, we have $F_\mu^{-1}(u)=\1_{\{u\le1/2\}}(-1)+\1_{\{u>1/2\}}$ and $F_\nu^{-1}(u)=\1_{\{u\le1/8\}}(-8)+\1_{\{1/8<u\le3/8\}}(-6)+\1_{\{u>3/8\}}\times4$. So for all $u\in(0,1)$, we have
	\[
	\widetilde m(u,dy)=\1_{\{u\le\frac12\}}\left(\frac12\delta_{-6}+\frac12\delta_4\right)+\1_{\{u>\frac12\}}\left(\frac14\delta_{-8}+\frac34\delta_4\right).
	\]
	
	We can compute $\int_0^1\int_\R\vert F_\nu^{-1}(u)-y\vert\,\widetilde m(u,dy)\,du=\frac{17}{4}=\int_0^1\vert F_\mu^{-1}(u)-F_\nu^{-1}(u)\vert\,du=W_1(\mu,\nu)$, so $(\widetilde m(u,dy))_{u\in(0,1)}$ is optimal.
	
	Thanks to Lemma \ref{Petitmdupp}, we can see that for all $Q\in\mathcal Q$, the Markov kernel $\widetilde m^Q$ is such that $\widetilde m^Q(u,\{F_\nu^{-1}(u)\})>0$ for $du$-almost all $u\in(0,1)$. However for all $u\in(0,1/8]$, we have $\widetilde m(u,\{F_\nu^{-1}(u)\})=\widetilde m(u,\{-8\})=0$. Therefore, $\widetilde m$ does not derive from $\mathcal Q$.
\end{rk}


\begin{proof}[Proof of Proposition \ref{mMinimiseCritere}] Let $\widetilde m$ be a Markov kernel satisfying \eqref{defPetitCouplage}. By Jensen's inequality, for $du$-almost every $u\in(0,1)$,
	\[
	\vert F_\nu^{-1}(u)-F_\mu^{-1}(u)\vert=\left\vert\int_\R (F_\nu^{-1}(u)-y)\,\widetilde m(u,dy)\right\vert\le\int_\R\vert F_\nu^{-1}(u)-y\vert\,\widetilde m(u,dy).
	\]
	
	So $\int_0^1\vert F_\nu^{-1}(u)-F_\mu^{-1}(u)\vert\,du\le\int_0^1\int_\R\vert F_\nu^{-1}(u)-y\vert\,\widetilde m(u,dy)\,du$.
	
	Therefore, to conclude, it is sufficient to prove that $\int_\R\vert F_\nu^{-1}(u)-y\vert\,\widetilde m^Q(u,dy)=\vert F_\nu^{-1}(u)-F_\mu^{-1}(u)\vert$, $du$-almost everywhere on $(0,1)$.
	
	Applying \eqref{calculIntegralehMtilde} to the measurable and nonnegative function $h:y\mapsto\vert F_\nu^{-1}(u)-y\vert$ yields for $du$-almost all $u\in(0,1)$
	\begin{align*}
	\int_\R\vert F_\nu^{-1}(u)-y\vert\,\widetilde m^Q(u,dy)&=\int_{(0,1)}\frac{(F_\mu^{-1}-F_\nu^{-1})^+(u)}{F_\nu^{-1}(v)-F_\nu^{-1}(u)}\vert F_\nu^{-1}(u)-F_\nu^{-1}(v)\vert\pi^Q_+(u,dv)\\
	&\phantom{=}+\int_{(0,1)}\frac{(F_\mu^{-1}-F_\nu^{-1})^-(u)}{F_\nu^{-1}(u)-F_\nu^{-1}(v)}\vert F_\nu^{-1}(u)-F_\nu^{-1}(v)\vert\pi^Q_-(u,dv).
	\end{align*}
	
	Using Lemma \ref{Petitmdupp}, we deduce that for $du$-almost all $u\in(0,1)$
	\begin{align*}
	\int_\R\vert F_\nu^{-1}(u)-y\vert\,\widetilde m^Q(u,dy)&=\int_{(0,1)}(F_\mu^{-1}-F_\nu^{-1})^+(u)\pi^Q_+(u,dv)+\int_{(0,1)}(F_\mu^{-1}-F_\nu^{-1})^-(u)\pi^Q_-(u,dv)\\
	&=\vert F_\nu^{-1}(u)-F_\mu^{-1}(u)\vert.
	\end{align*}
\end{proof}

\begin{proof}[Proof of Theorem \ref{GrandMMinimiseCritere}] Let $Q\in\mathcal Q$ and let $\widetilde m^Q$ be the Markov kernel defined by \eqref{defPetitm}. By Lemma \ref{lienPetitmGrandM} and Proposition \ref{mMinimiseCritere},
	\begin{align*}
	\int_{\R\times\R}\vert y-x\vert\,\mu(dx)\,m^Q(x,dy)&=\int_0^1\int_\R\vert y-F_\mu^{-1}(u)\vert\,\widetilde m^Q(u,dy)\,du\\
	&\le \int_0^1\int_\R\vert y-F_\nu^{-1}(u)\vert\,\widetilde m^Q(u,dy)\,du\\
	&\phantom{=}+\int_0^1\int_\R\vert F_\nu^{-1}(u)-F_\mu^{-1}(u)\vert\,\widetilde m^Q(u,dy)\,du\\
	&=2 W_1(\mu,\nu).
	\end{align*}

Since $M^Q(dx,dy)=\mu(dx)\,m^Q(x,dy)$ is a martingale coupling between $\mu$ and $\nu$ (Proposition \ref{muMcouplageMartingale}), we get \eqref{inegaliteControle}.

Let us show now that the constant $2$ in sharp, that is
\[
	\sup_{\substack{\mu,\nu\in\mathcal P_1(\R)\\\mu<_{cx}\nu}}\frac{\mathcal M_1(\mu,\nu)}{W_1(\mu,\nu)}=2.
	\]
	\label{constante2Optimale}

Let $a,b\in\R$ such that $0<a<b$. Let $\mu=\frac12\delta_{-a}+\frac12\delta_a$ and $\nu=\frac12\delta_{-b}+\frac12\delta_b$. Since $\mu$ and $\nu$ are two probability measures with equal means such that $\mu$ is concentrated on $[-a,a]$ and $\nu$ on $\R\backslash[-a,a]$, then $\mu<_{cx}\nu$. Any coupling $H$ between $\mu$ and $\nu$ is of the form
	\[
	H=r\delta_{(-a,-b)}+r'\delta_{(-a,b)}+p\delta_{(a,b)}+p'\delta_{(a,-b)},
	\]
	where $r,r',p,p'\ge0$ and $p+p'=r+r'=p+r'=p'+r=1/2$. One can easily see that $H$ is a martingale coupling iff $b(p-p')=a/2$ and $b(r'-r)=-a/2$, that is
	\begin{equation}\label{uniqueCouplageMartRademacher}
	H=\frac{(b+a)}{4b}\delta_{(-a,-b)}+\frac{(b-a)}{4b}\delta_{(-a,b)}+\frac{(b+a)}{4b}\delta_{(a,b)}+\frac{(b-a)}{4b}\delta_{(a,-b)}.
	\end{equation}
	
	Since there is only one martingale coupling, we trivially have 
	\[
	\mathcal M_1(\mu,\nu)=\int_{\R\times\R}\vert x-y\vert\,H(dx,dy)=\frac{b^2-a^2}{b}\cdot
	\]
	
	On the other hand, since $W_1(\mu,\nu)=\int_\R\vert F_\mu(t)-F_\nu(t)\vert\,dt$ (see for instance Remark 2.19 (iii) Chapter 2 \cite{VillaniOT2}),
	\[
	W_1(\mu,\nu)=\int_{-\infty}^{-b}0\,dt+\int_{-b}^{-a}\frac12\,dt+\int_{-a}^a0\,dt+\int_a^b\frac12\,dt+\int_b^{+\infty}0\,dt=b-a.
	\]

So, we have
\[
\frac{\mathcal M_1(\mu,\nu)}{W_1(\mu,\nu)}=1+\frac{a}{b},
\]
which tends to $2$ as $b$ tends to $a$.
\end{proof}

Also, the stability inequality \eqref{inegaliteControle} does not generalise with $\mathcal M_1(\mu,\nu)$ and $W_1(\mu,\nu)$ replaced by $\mathcal M_\rho(\mu,\nu)$ and $W_\rho(\mu,\nu)$ for $\rho>1$, as shown in the next proposition in general dimension.

\begin{prop2} Let $d\ge1$ and $\rho>1$. Then
	\[
	\sup_{\substack{\mu,\nu\in\mathcal P_\rho(\R^d)\\\mu<_{cx}\nu}}\frac{\mathcal M_\rho(\mu,\nu)}{W_\rho(\mu,\nu)}=+\infty.
	\]
	\label{rho1optimal}
\end{prop2}

The proof of Proposition \ref{rho1optimal} will use the following lemma for the case $1<\rho<2$.
\begin{lemma}\label{minorationXmoinsYpuissanceRho} Let $d\ge1$ and $\rho\in(1,2)$. Let $\vert\cdot\vert$ denote the Euclidean norm on $\R^d$. Then there exists $C_\rho>0$ such that
	\begin{equation}\label{inegaliteProduitScalaire}
	\forall (x,y)\in\R^d\times\R^d,\quad\vert x-y\vert^\rho\ge C_\rho\left(\vert x\vert^\rho-\frac{\rho}{\rho-1}\vert x\vert^{\rho-2}\langle x,y\rangle_{\R^d}+\frac1{\rho-1}\vert y\vert^\rho\right),
	\end{equation}
	where, by convention, for all $y\in\R^d$ and for $x=0$ we choose $\vert x\vert^{\rho-2}\langle x,y\rangle_{\R^d}$ equal to its limit $0$ as $x\to0$.
	
	When $\rho=2$, both sides of the inequality are equal with $C_2=1$.
\end{lemma}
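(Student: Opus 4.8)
The plan is to exploit homogeneity and rotational invariance to reduce \eqref{inegaliteProduitScalaire} to a one-parameter family of scalar estimates, and then run a soft compactness argument. Both sides of \eqref{inegaliteProduitScalaire} are positively homogeneous of degree $\rho$ under $(x,y)\mapsto(\lambda x,\lambda y)$, $\lambda>0$, and invariant under $(x,y)\mapsto(Ox,Oy)$ for $O$ orthogonal. The case $x=0$ reads $|y|^\rho\ge\frac{C_\rho}{\rho-1}|y|^\rho$ and merely forces $C_\rho\le\rho-1$; since the right-hand side of \eqref{inegaliteProduitScalaire} will be seen to be nonnegative, it is enough to obtain the inequality for $x\neq 0$ and then shrink $C_\rho$ if necessary. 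For $x\neq 0$, after rescaling to $|x|=1$ and rotating so that $x=e_1$, and noting that $|x-y|$, $\langle x,y\rangle_{\R^d}$ and $|y|$ depend on $y$ only through $y_1$ and $|(y_2,\dots,y_d)|$ (so in fact the constant does not depend on $d$), the claim becomes: there is $C_\rho>0$ with $|e_1-y|^\rho\ge C_\rho\,R(y)$ for all $y\in\R^d$, where $R(y):=1-\tfrac{\rho}{\rho-1}y_1+\tfrac1{\rho-1}|y|^\rho$.

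The key observation is that, writing $g(z)=|z|^\rho$ (which is $C^1$ on $\R^d$, $C^2$ away from the origin, convex because $\rho\ge 1$, and strictly convex because $\rho>1$), one has the identity $(\rho-1)R(y)=g(y)-g(e_1)-\langle\nabla g(e_1),y-e_1\rangle_{\R^d}$, i.e. $(\rho-1)R(y)$ is the tangent-plane deficit of $g$ at $e_1$. Hence $R(y)\ge 0$, with $R(y)=0$ iff $y=e_1$; and both $R$ and $y\mapsto|e_1-y|^\rho$ vanish exactly at $y=e_1$. Thus $\phi(y):=|e_1-y|^\rho/R(y)$ is well defined, continuous and strictly positive on $\R^d\setminus\{e_1\}$, and it remains to show $\inf_{y\neq e_1}\phi(y)>0$.

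I would prove this by splitting $\R^d\setminus\{e_1\}$ into three regions. As $|y|\to\infty$ one checks $|e_1-y|^\rho/|y|^\rho\to 1$ and $R(y)/|y|^\rho\to\tfrac1{\rho-1}$, so $\phi(y)\to\rho-1>0$; hence $\phi\ge\tfrac{\rho-1}{2}$ outside some ball $\{|y|\le M\}$. Near $e_1$, on $B(e_1,\tfrac12)$ the function $g$ is $C^2$, so Taylor's formula with integral remainder gives $(\rho-1)R(y)\le\tfrac{L}{2}|y-e_1|^2$ with $L:=\sup_{B(e_1,1/2)}\|\nabla^2 g\|$, whence $\phi(y)\ge\tfrac{2(\rho-1)}{L}|y-e_1|^{\rho-2}\to+\infty$ as $y\to e_1$ because $\rho-2<0$; hence $\phi\ge 1$ on some punctured ball $B(e_1,\delta)\setminus\{e_1\}$. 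Finally, on the compact set $\{\delta\le|y-e_1|,\ |y|\le M\}$, which avoids $e_1$, the continuous strictly positive function $\phi$ attains a positive minimum $m_0$. Taking $C_\rho:=\min\{m_0,1,\tfrac{\rho-1}{2}\}$ (which is $\le\rho-1$) yields the reduced inequality; combined with the rescaling/rotation reduction and the already-treated case $x=0$, this gives \eqref{inegaliteProduitScalaire}.

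The only delicate point is the behaviour near the common zero $y=e_1$: it is exactly there that the hypothesis $\rho<2$ enters, through the competition between the order-$2$ vanishing of $R$ and the order-$\rho$ vanishing of $|e_1-y|^\rho$. This is also why the value $C_2=1$ in the borderline case $\rho=2$ cannot come from this argument but is instead immediate from expanding $|x-y|^2=|x|^2-2\langle x,y\rangle_{\R^d}+|y|^2$, which coincides with the right-hand side of \eqref{inegaliteProduitScalaire} when $C_2=1$. Everything else is the routine verification that $\phi$ is bounded below on the three pieces.
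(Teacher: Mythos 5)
Your proof is correct and follows essentially the same route as the paper's: reduce by homogeneity to $\vert x\vert=1$, then bound the ratio of the two sides by treating separately a neighbourhood of the common zero $y=e_1$ (where the hypothesis $\rho<2$ enters, exactly as you say), the regime $\vert y\vert\to\infty$, and the compact region in between. Your identification of $(\rho-1)R(y)$ as the tangent-plane deficit of $z\mapsto\vert z\vert^\rho$ at $e_1$ is a nice conceptual shortcut for seeing that $R\ge0$ vanishes only at $e_1$, but it does not change the substance of the argument.
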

\begin{proof}[Proof of Lemma \ref{minorationXmoinsYpuissanceRho}] If $x=0$, any $C_\rho\le\rho-1$ suits. Else, dividing by $\vert x\vert^\rho$ and using that $y/\vert x\vert$ explores $\R^d$ when $y$ explores $\R^d$, we see that the statement reduces to show that for all $x,y\in\R^d$ such that $\vert x\vert=1$,
\[
\vert x-y\vert^\rho\ge C_\rho\left(1-\frac{\rho}{\rho-1}\langle x,y\rangle_{\R^d}+\frac1{\rho-1}\vert y\vert^\rho\right).
\]

For all $x,y\in\R^d$ such that $\vert x\vert=1$, there exist $y_1,y_2\in\R$ such that $y=y_1x+y_2x^\perp$, where $x^\perp$ is an element of $\operatorname{span}(x)^\perp$ such that $\vert x^\perp\vert=1$. The inequality to prove becomes
\begin{equation}\label{inegaliteProduitScalairePreuve}
\forall (y_1,y_2)\in\R^2,\quad\left((1-y_1)^2+y_2^2\right)^{\rho/2}\ge C_\rho\left(1-\frac{\rho}{\rho-1}y_1+\frac{1}{\rho-1}(y_1^2+y_2^2)^{\rho/2}\right).
\end{equation}

Let $L:(y_1,y_2)\mapsto\left((1-y_1)^2+y_2^2\right)^{\rho/2}$ and $R:(y_1,y_2)\mapsto1-\frac{\rho}{\rho-1}y_1+\frac{1}{\rho-1}(y_1^2+y_2^2)^{\rho/2}$. When $(y_1,y_2)\to(1,0)$, we have
\begin{align*}
	R(y_1,y_2)&=\frac{1}{\rho-1}\left(\rho-1-\rho(y_1-1+1)+(1+2(y_1-1)+(y_1-1)^2+y_2^2)^{\rho/2}\right)\\
	&=\frac{1}{\rho-1}\left(-1-\rho(y_1-1)+1+\rho(y_1-1)+\frac{\rho}{2}(y_1-1)^2+\frac{\rho}{2}y_2^2\right.\\
	&\phantom{=\frac{1}{\rho-1}(}\left.+\rho(\frac{\rho}{2}-1)(y_1-1)^2+o((y_1-1)^2+y_2^2)\right)\\
	&=\frac{1}{\rho-1}\left(\frac{\rho}{2}(y_1-1)^2+\frac{\rho}{2}y_2^2-\rho(1-\frac{\rho}{2})(y_1-1)^2+o((y_1-1)^2+y_2^2)\right).
\end{align*}

Since $\rho<2$, then $L(y_1,y_2)\ge (1-y_1)^2+y_2^2$ for any $(y_1,y_2)$ in the ball centered at $(1,0)$ with radius $1$. So
\begin{align*}
	\limsup_{\substack{(y_1,y_2)\to(1,0)\\(y_1,y_2)\neq(1,0)}}\frac{R(y_1,y_2)}{L(y_1,y_2)}&\le\frac{\rho}{2(\rho-1)},
\end{align*}

On the other hand, when $y_1^2+y_2^2\to+\infty$,
\begin{align*}
	\frac{R(y_1,y_2)}{L(y_1,y_2)}&\sim\frac{(y_1^2+y_2^2)^{\rho/2}}{(\rho-1)(y_1^2+y_2^2)^{\rho/2}}=\frac{1}{\rho-1}\cdot
\end{align*}

So $(y_1,y_2)\mapsto R(y_1,y_2)/L(y_1,y_2)$ is defined and continuous on $(\R^d\times\R^d)\backslash\{(1,0)\}$, bounded from above in the ball centered at $(1,0)$ with radius $1$ and has a finite limit when the norm of $(y_1,y_2)$ tends to $+\infty$. Therefore this function is bounded from above on $(\R^d\times\R^d)\backslash\{(1,0)\}$ by a certain constant $K\ge\frac{1}{\rho-1}$. Since both sides of \eqref{inegaliteProduitScalairePreuve} vanish for $(y_1,y_2)=(1,0)$, we conclude that this inequality holds with constant $C_\rho=\frac1K$ and \eqref{inegaliteProduitScalaire} with constant $C_\rho=\frac1K$.
\end{proof}

\begin{proof}[Proof of Proposition \ref{rho1optimal}] Since all norms on $\R^d$ are equivalent, we can suppose that $\R^d$ is endowed with the Euclidean norm. The case $\rho\ge2$ was addressed in the introduction in the one dimensional case. Its extension to dimension $d$ is immediate. Indeed, for all $n\in\N^*$, let $\mu_n=\mathcal N_1(0,n^2)$ and $\mu'_n(dx_1,\cdots,dx_d)=(x_1,0,\cdots,0)_\sharp\mu_n(dx_1)$ where $\sharp$ denotes the pushforward operation. By reduction to the one dimensional case, we have
	\begin{align*}
	\frac{\mathcal M_\rho(\mu'_n,\mu'_{n+1})}{W_\rho(\mu'_n,\mu'_{n+1})}&=\frac{\mathcal M_\rho(\mu_n,\mu_{n+1})}{W_\rho(\mu_n,\mu_{n+1})}\underset{n\to+\infty}{\longrightarrow}+\infty\cdot
	\end{align*}
	
	We now consider the case $1<\rho<2$. Let $\mu,\nu\in\mathcal P_\rho(\R^d)$ be such that $\mu<_{cx}\nu$, and let $M$ be a martingale coupling between $\mu$ and $\nu$, which exists according to Strassen's theorem or Proposition \ref{muMcouplageMartingale} and Corollary \ref{caracterisationQNonVide}. Thanks to Lemma \ref{minorationXmoinsYpuissanceRho}, there exists $C_\rho>0$ such that
	\begin{align*}
	\int_{\R^d\times\R^d}\vert x-y\vert^\rho\,M(dx,dy)&\ge C_\rho\left(\int_{\R^d}\vert x\vert^\rho\,\mu(dx)-\frac{\rho}{\rho-1}\int_{\R^d\times\R^d}\vert x\vert^{\rho-2}\langle x,y\rangle_{\R^d}\,M(dx,dy)\right.\\
	&\phantom{\ge C_\rho(}\left.+\frac{1}{\rho-1}\int_{\R^d}\vert y\vert^\rho\,\nu(dx)\right).
	\end{align*}
	
	Since $M(dx,dy)=\mu(dx)\,m(x,dy)$ is a martingale coupling, we have for $\mu(dx)$-almost all $x\in\R^d$, $\int_{\R^d}\vert x\vert^{\rho-2}\langle x,y\rangle_{\R^d}\,m(x,dy)=\vert x\vert^\rho$, where both sides are equal to $0$ when $x=0$. So we get
	\[
	\int_{\R^d\times\R^d}\vert x-y\vert^\rho\,M(dx,dy)\ge\frac{C_\rho}{\rho-1}\left(\int_{\R^d}\vert y\vert^\rho\,\nu(dx)-\int_{\R^d}\vert x\vert^\rho\,\mu(dx)\right).
	\]
	
	For all $n\in\N$, let $\mu_n=\mathcal N_d(0,n^2I_d)$. Let $G\sim\mathcal N_d(0,I_d)$. Then for all $n\in\N$, $W_\rho^\rho(\mu_n,\mu_{n+1})\le\E[\vert G\vert^\rho]$ and
	\begin{align*}
	\frac{\mathcal M_\rho^\rho(\mu_n,\mu_{n+1})}{W_\rho^\rho(\mu_n,\mu_{n+1})}&\ge\frac{C_\rho}{\rho-1}\frac{(\E[\vert (n+1)G\vert^\rho]-\E[\vert nG\vert^\rho])}{\E[\vert G\vert^\rho]}\\
	&=\frac{((n+1)^\rho-n^\rho)C_\rho}{\rho-1}\\
	&\sim_{n\to+\infty}\frac{\rho C_\rho}{\rho-1}n^{\rho-1}\underset{n\to+\infty}{\longrightarrow}+\infty.
	\end{align*}

\end{proof}

\section{The inverse transform martingale coupling}
\label{sec:ITMC}

\subsection{Definition of the inverse transform martingale coupling}
\label{sousSec:DefITMC}

Let $\mu,\nu\in\mathcal P_1(\R)$ be such that $\mu<_{cx}\nu$.  We recall that $\Psi_+$ and $\Psi_-$ are defined for all $u\in[0,1]$ by $\Psi_+(u)=\int_0^u(F_\mu^{-1}-F_\nu^{-1})^+(v)\,dv$ and $\Psi_-(u)=\int_0^u(F_\mu^{-1}-F_\nu^{-1})^-(v)\,dv$. Let $\Psi_-^{-1}$ (resp. $\Psi_+^{-1}$) denote the left continuous generalised inverse of $\Psi_-$ (resp. $\Psi_+$). Let $\varphi:[0,1]\to[0,1]$ and $\widetilde\varphi:[0,1]\to[0,1]$ be defined for all $u\in(0,1)$ by
\begin{align*}
\varphi(u)&=\Psi_-^{-1}(\Psi_+(u))=\inf\{r\in[0,1]\mid\Psi_-(r)\ge\Psi_+(u)\};\\
\widetilde\varphi(u)&=\Psi_+^{-1}(\Psi_-(u))=\inf\{r\in[0,1]\mid\Psi_+(r)\ge\Psi_-(u)\},
\end{align*}
which are well defined thanks to the equality $\Psi_-(1)=\Psi_+(1)$, consequence of the equality of the means. 

Let $Q^{IT}$ be the measure defined on $(0,1)^2$ by
\begin{equation}
Q^{IT}(du,dv)=\frac{1}{\gamma}(F_\mu^{-1}-F_\nu^{-1})^+(u)\,du\,\pi^{IT}_+(u,dv)\quad\textrm{where}\ \pi^{IT}_+(u,dv)=\1_{\{0<\varphi(u)<1\}}\,\delta_{\varphi(u)}(dv),
\end{equation}
with $\gamma=\Psi_-(1)=\Psi_+(1)$. According to the next proposition, this measure belongs to $\mathcal Q$.

\begin{prop2}\label{ITMCcasParticulier} Let $\mu,\nu\in\mathcal P_1(\R)$ be such that $\mu<_{cx}\nu$. The measure $Q^{IT}$ is an element of $\mathcal Q$ as defined in Section \ref{MesuresQ}. Moreover,
	\[
	Q^{IT}(du,dv)=\frac{1}{\gamma}(F_\mu^{-1}-F_\nu^{-1})^-(v)\,dv\,\pi^{IT}_-(v,du)\quad\textrm{where $\pi^{IT}_-(v,du)=\1_{\{0<\widetildelow{\varphi}(v)<1\}}\,\delta_{\widetildelow\varphi(v)}(du)$}.
	\]
\end{prop2}

Let us then write $(\widetilde m^{IT}(u,dy))_{u\in(0,1)}$ instead of $(\widetilde m^{Q^{IT}}(u,dy))_{u\in(0,1)}$ and $(m^{IT}(x,dy))_{x\in\R}$ instead of $(m^{Q^{IT}}(x,dy))_{x\in\R}$. By Proposition \ref{muMcouplageMartingale}, the probability measure $M^{IT}(dx,dy)=\mu(dx)\,m^{IT}(x,dy)$ is a martingale coupling between $\mu$ and $\nu$, which we call the inverse transform martingale coupling.

We deduce from the expression of $\pi^{IT}_-$ given in Proposition \ref{ITMCcasParticulier} that the definition of $(\widetilde m^{IT}(u,dy))_{u\in(0,1)}$ reduces to
\begin{equation}
\left\{\begin{array}{r}
\displaystyle\frac{F_\mu^{-1}(u)-F_\nu^{-1}(u)}{F_\nu^{-1}(\varphi(u))-F_\nu^{-1}(u)}\delta_{F_\nu^{-1}(\varphi(u))}(dy)+\left(1-\frac{F_\mu^{-1}(u)-F_\nu^{-1}(u)}{F_\nu^{-1}(\varphi(u))-F_\nu^{-1}(u)}\right)\delta_{F_\nu^{-1}(u)}(dy)\jump\\
\mathrm{if}\ F_\nu^{-1}(\varphi(u))>F_\mu^{-1}(u)>F_\nu^{-1}(u)\text{ and }\varphi(u)<1;\\
\\
\displaystyle\frac{F_\nu^{-1}(u)-F_\mu^{-1}(u)}{F_\nu^{-1}(u)-F_\nu^{-1}(\widetilde\varphi(u))}\delta_{F_\nu^{-1}(\widetildelow\varphi(u))}(dy)+\left(1-\frac{F_\nu^{-1}(u)-F_\mu^{-1}(u)}{F_\nu^{-1}(u)-F_\nu^{-1}(\widetilde\varphi(u))}\right)\delta_{F_\nu^{-1}(u)}(dy)\jump\\
\mathrm{if}\ F_\nu^{-1}(\widetilde\varphi(u))<F_\mu^{-1}(u)<F_\nu^{-1}(u)\text{ and }\widetilde\varphi(u)<1;\\
\\
\delta_{F_\nu^{-1}(u)}(dy)\quad\quad\quad\quad \mathrm{otherwise.}\\
\end{array}
\right.
\label{defPetitmITMC}
\end{equation}

Note that if $F_\mu^{-1}(u)>F_\nu^{-1}(u)$, then by left-continuity of $F_\mu^{-1}$ and $F_\nu^{-1}$, $\Psi_+(u)>0$, which implies $\varphi(u)>0$. Therefore $F_\mu^{-1}(u)>F_\nu^{-1}(u)$ implies $\varphi(u)>0$ so that with the condition $\varphi(u)<1$, $F_\nu^{-1}(\varphi(u))$ makes sense. For similar reasons, if $F_\mu^{-1}(u)<F_\nu^{-1}(u)$ and $\widetilde\varphi(u)<1$ then $F_\nu^{-1}(\widetilde\varphi(u))$ makes sense.

\begin{rk}\label{rqPreuveConstructiveStrassen} We recall the celebrated Strassen theorem: if $\mu,\nu\in\mathcal P_1(\R)$, then $\mu\le_{cx}\nu$ iff there exists a martingale coupling between $\mu$ and $\nu$. The sufficient condition is a straightforward consequence of Jensen's inequality. Indeed, if $M(dx,dy)=\mu(dx)\,m(x,dy)$ is a martingale coupling between $\mu,\nu\in\mathcal P_1(\R)$, then for all convex functions $f:\R\to\R$,
	\[
	\int_\R f(x)\,\mu(dx)=\int_\R f\left(\int_\R y\,m(x,dy)\right)\,\mu(dx)\le\int_{\R^2}f(y)\,m(x,dy)\,\mu(dx)=\int_\R f(y)\,\nu(dy).
	\]
	
	Conversely, suppose that $\mu,\nu\in\mathcal P_1(\R)$ are such that $\mu\le_{cx}\nu$. For $t\in\R$, $\int_\R(t-x)^+\,\mu(dx)\le\int_\R(t-x)^+\,\nu(dx)$ by convexity of $x\in\R\mapsto(t-x)^+$. By the Fubini-Tonelli theorem, $\int_\R(t-x)^+\,\mu(dx)=\int_{-\infty}^tF_\mu(x)\,dx$. Hence $\varphi_\mu(t)=\int_{-\infty}^tF_\mu(x)\,dx\le\varphi_\nu(t)=\int_{-\infty}^tF_\nu(x)\,dx$ for all $t\in\R$. Hence the respective Fenchel-Legendre transforms $\varphi_\mu^*$ and $\varphi_\nu^*$ of $\varphi_\mu$ and $\varphi_\nu$ satisfy $\varphi_\mu^*\ge \varphi_\nu^*$. For all $u\in(0,1)$ and for all $t\in\R$, $F_\mu^{-1}(u)\le t\iff u\le F_\mu(t)$, so
	\[
	\sup_{q\in[0,1]}\left(qt-\int_0^qF_\mu^{-1}(u)\,du\right)=\int_0^{F_\mu(t)}(t-F_\mu^{-1}(u))\,du=\int_0^1(t-F_\mu^{-1}(u))^+\,du=\varphi_\mu(t).
	\]
	
	Since $q\mapsto(\int_0^qF_\mu^{-1}(u)\,du)$ is convex on $[0,1]$, we get the well known fact (see for instance Lemma A.22 \cite{FollmerSchied}) that for all $q\in\R$, $\varphi_\mu^*(q)=(\int_0^qF_\mu^{-1}(u)\,du)\1_{[0,1]}(q)+(+\infty)\1_{[0,1]^\complement}(q)$. Hence
	\begin{equation}\label{inegalitesIntegralesFonctionsQuantiles}
	\int_0^qF_\mu^{-1}(u)\,du\ge\int_0^qF_\nu^{-1}(u)\,du\quad\textrm{for all $q\in[0,1]$, with equality for $q=1$}.
	\end{equation}
	
	We will see in the proof of Proposition \ref{ITMCcasParticulier} that if $\mu\neq\nu$, then \eqref{inegalitesIntegralesFonctionsQuantiles} implies that $Q^{IT}$ belongs to $\mathcal Q$, which ensures that the inverse transform martingale coupling $M^{IT}$ exists. If $\mu=\nu$, the existence of a martingale coupling is straightforward. Therefore, the construction of the inverse transform martingale coupling gives a constructive proof of the necessary condition in Strassen's theorem in dimension $1$.
	
%
   
\end{rk}

\begin{proof}[Proof of Proposition \ref{ITMCcasParticulier}] By Lemma \ref{chgtVariablesf+f-} below,
	\begin{align*}
	Q^{IT}((0,1)^2)=\frac1\gamma\int_0^1(F_\mu^{-1}-F_\nu^{-1})^+(u)\1_{\{0<\varphi(u)<1\}}\,du=\frac1\gamma\int_0^1(F_\mu^{-1}-F_\nu^{-1})^-(u)\1_{\{0<u<1\}}\,du=1,
	\end{align*}
	so $Q^{IT}$ is a probability measure on $(0,1)^2$. Let $h:(0,1)^2\to\R$ be a measurable and bounded function. We have
	\begin{equation}\label{demoITMCcasParticulier1}
		\int_{(0,1)^2}h(u,v)\,Q^{IT}(du,dv=\frac{1}{\gamma}\int_{(0,1)}h(u,\varphi(u))(F_\mu^{-1}-F_\nu^{-1})^+(u)\1_{\{0<\varphi(u)<1\}}\,du.
	\end{equation}
	
	Since $\Psi_-$ is continuous, one has $\Psi_-(\Psi_-^{-1}(u))=u$ for all $u\in(0,1)$. By Lemma \ref{FmoinsUnrondF} below, we deduce that $\widetilde\varphi(\varphi(u))=u$, $(F_\mu^{-1}-F_\nu^{-1})^+(u)\,du$-almost everywhere on $(0,1)$. Therefore, by Lemma \ref{chgtVariablesf+f-} below,
	\begin{align*}
		&\int_{(0,1)}h(u,\varphi(u))(F_\mu^{-1}-F_\nu^{-1})^+(u)\1_{\{0<\varphi(u)<1\}}\,du\\
		&=\int_{(0,1)}h(\widetilde\varphi(\varphi(u)),\varphi(u))(F_\mu^{-1}-F_\nu^{-1})^+(u)\1_{\{0<\varphi(u)<1\}}\1_{\{0<\widetildelow{\varphi}(\varphi(u))<1\}}\,du\\
		&=\int_{(0,1)}h(\widetilde\varphi(v),v)(F_\mu^{-1}-F_\nu^{-1})^-(v)\1_{\{0<v<1\}}\1_{\{0<\widetildelow\varphi(v)<1\}}\,dv\\
		&=\int_{(0,1)^2}h(u,v)(F_\mu^{-1}-F_\nu^{-1})^-(v)\1_{\{0<\widetildelow\varphi(v)<1\}}\,\delta_{\widetildelow\varphi(v)}(du)\,dv.
	\end{align*}

%

So
\begin{align*}
	\int_{(0,1)^2}h(u,v)\,Q^{IT}(du,dv)&=\frac{1}{\gamma}\int_{(0,1)^2}h(u,v)(F_\mu^{-1}-F_\nu^{-1})^-(v)\1_{\{0<\widetildelow\varphi(v)<1\}}\,\delta_{\widetildelow\varphi(v)}(du)\,dv.
\end{align*}
Hence $Q^{IT}(du,dv)=\frac{1}{\gamma}(F_\mu^{-1}-F_\nu^{-1})^-(v)\,dv\,\pi^{IT}_-(v,du)$, where $\pi^{IT}_-(v,du)=\1_{\{0<\widetildelow\varphi(v)<1\}}\,\delta_{\widetildelow\varphi(v)}(du)$. Moreover, since $Q^{IT}$ is a probability measure on $(0,1)^2$, it proves that
\begin{equation}\label{phiEntre0et1}
d\Psi_+(u)\textrm{-a.e.}\ (\textrm{resp. $d\Psi_-(u)$-a.e.)},\quad0<\varphi(u)<1\ \textrm{(resp. $0<\widetilde\varphi(u)<1$)}.
\end{equation}

Therefore, it is clear that $Q^{IT}$ has first marginal $\frac{1}{\gamma}(F_\mu^{-1}-F_\nu^{-1})^+(u)\,du$ and second marginal $\frac{1}{\gamma}(F_\mu^{-1}-F_\nu^{-1})^-(v)\,dv$. For $h:(u,v)\mapsto\1_{\{u<v\}}$, \eqref{demoITMCcasParticulier1} writes
\begin{align*}
	Q^{IT}\left(\{(u,v)\in(0,1)^2\mid u<v\}\right)&=\frac{1}{\gamma}\int_0^1\1_{\{u<\varphi(u)\}}(F_\mu^{-1}-F_\nu^{-1})^+(u)\1_{\{0<\varphi(u)<1\}}\,du.
\end{align*}

Let us show that $u<\varphi(u)$, $(F_\mu^{-1}-F_\nu^{-1})^+(u)\,du$-almost everywhere on $(0,1)$. By the definition of $\varphi$ and Lemma \ref{FmoinsUnrondF}, for all $u\in(0,1)$, $\varphi(u)\le u\iff\Psi_-^{-1}(\Psi_+(u))\le u\iff\Psi_+(u)\le\Psi_-(u)$. Recall that since $\mu\le_{cx}\nu$, according \eqref{inegalitesIntegralesFonctionsQuantiles}, for all $u\in(0,1)$, $\int_0^uF_\mu^{-1}(v)\,dv\ge\int_0^uF_\nu^{-1}(v)\,dv$, so $\Psi_+(u)\ge\Psi_-(u)$. Therefore, we get that
\begin{equation}\label{phiPlusPetitQueU}
\forall u\in(0,1),\quad\varphi(u)\le u\iff \Psi_+(u)=\Psi_-(u). 
\end{equation}

Suppose $F_\mu^{-1}(u)>F_\nu^{-1}(u)$. Since $F_\mu^{-1}$ and $F_\nu^{-1}$ are left continuous, this implies $F_\mu^{-1}(u-\varepsilon)>F_\nu^{-1}(u-\varepsilon)$ for $\varepsilon>0$ small enough. So, for $\varepsilon>0$ small enough, $\Psi_-(u)=\Psi_-(u-\varepsilon)\le\Psi_+(u-\varepsilon)<\Psi_+(u)$, which implies 
\begin{equation}\label{phiPlusGrandQueU}
u<\varphi(u),\quad(F_\mu^{-1}-F_\nu^{-1})^+(u)\,du\textrm{-almost everywhere on $(0,1)$}.
\end{equation}

So
\begin{align*}
	Q^{IT}\left(\{(u,v)\in(0,1)^2\mid u<v\}\right)&=\frac{1}{\gamma}\int_0^1(F_\mu^{-1}-F_\nu^{-1})^+(u)\1_{\{0<\varphi(u)<1\}}\,du=Q^{IT}((0,1)^2)=1,
\end{align*}
since $Q^{IT}$ is a probability measure on $(0,1)^2$.
\end{proof}

Let us now suppose that $\mu,\nu\in\mathcal P_1(\R)$ are such that $\mu<_{cx}\nu$ and there exists $p\in(0,1)$ such that $u\mapsto\int_0^u(F_\mu^{-1}(v)-F_\nu^{-1}(v))\,dv$ is nondecreasing on $[0,p]$ and nonincreasing on $[p,1]$. We saw in Example \ref{exempleQ1} a concrete example of an element $Q_1\in\mathcal Q$. Any probability measure $Q$ defined on $(0,1)$ satisfying properties $(i)$ and $(ii)$ of the definition of $\mathcal Q$ is concentrated on $(0,p)\times(p,1)$ and therefore satisfies $(iii)$. The probability measure $Q_1$ is a simple example that comes to mind. The inverse transform martingale coupling presented in this section is a valid example as well and inspires another coupling which is sort of the nonincreasing twin of the inverse transform martingale coupling.

Let $\chi_-:u\in[0,1]\mapsto\int_{1-u}^1(F_\mu^{-1}-F_\nu^{-1})^-(v)\,dv=\int_0^u(F_\mu^{-1}-F_\nu^{-1})^-(1-v)\,dv$, $\chi_+:u\in[0,1]\mapsto\int_0^{u}(F_\mu^{-1}-F_\nu^{-1})^+(v)\,dv$ and $\Gamma=\chi_-^{-1}\circ\chi_+$ where $\chi_-^{-1}$ denotes the left continuous generalised inverse of $\chi_-$, that is
\[
\Gamma:u\in[0,1]\mapsto\inf\{r\in[0,1]\mid\chi_-(r)\ge\chi_+(u)\},
\]
which is well defined since $\chi_+(1)=\chi_+(p)=\chi_-(1-p)=\gamma$, consequence of the equality of the means. Let $Q^{NIT}$ be the probability measure defined on $(0,1)^2$ by
\begin{equation}
\label{QnonincreasingITMC}
Q^{NIT}(du,dv)=\frac{1}{\gamma}(F_\mu^{-1}-F_\nu^{-1})^+(u)\,du\,\pi^{NIT}_+(u,dv)\quad\textrm{where $\pi^{NIT}_+(u,dv)=\1_{\{\Gamma(u)>0\}}\,\delta_{1-\Gamma(u)}(dv)$}.
\end{equation}

\begin{prop2}\label{QNITappartientAQ} Let $\mu,\nu\in\mathcal P_1(\R)$ be such that $\mu<_{cx}\nu$. Assume that there exists $p\in(0,1)$ such that $u\mapsto\int_0^u(F_\mu^{-1}(v)-F_\nu^{-1}(v))\,dv$ is nondecreasing on $[0,p]$ and nonincreasing on $[p,1]$. Then $Q^{NIT}\in\mathcal Q$.
\end{prop2}

In the symmetric case, that is when $\mu$ and $\nu$ are symmetric and $p=1/2$, we have $\Gamma(u)=u$ and therefore $Q^{NIT}=Q_2$ (see \eqref{exempleMesureQ2}). Hence $Q^{NIT}$ is a generalisation of the symmetric coupling.

\begin{proof}[Proof of Proposition \ref{QNITappartientAQ}] Note that $\Gamma(1)\le1-p$, hence $\Gamma(u)<1$ for all $u\in(0,1)$. It is clear that $Q^{NIT}$ satisfies property $(i)$ of the definition of $\mathcal Q$. By Lemma \ref{chgtVariablesf+f-} below applied with the functions $f_1:u\in(0,1)\mapsto(F_\mu^{-1}-F_\nu^{-1})^+(u)$ and $f_2:u\in(0,1)\mapsto(F_\mu^{-1}-F_\nu^{-1})^-(1-u)$, we have
\begin{align*}
\frac{1}{\gamma}\int_0^1(F_\mu^{-1}-F_\nu^{-1})^+(u)h(1-\Gamma(u))\1_{\{\Gamma(u)>0\}}\,du&=\frac{1}{\gamma}\int_0^1(F_\mu^{-1}-F_\nu^{-1})^-(1-v)h(1-v)\,dv\\
&=\frac1\gamma\int_0^1(F_\mu^{-1}-F_\nu^{-1})^{-1}(v)h(v)\,dv,
\end{align*}
for any measurable and bounded function $h:(0,1)\to\R$. So $Q^{NIT}$ satisfies $(ii)$ as well, and therefore $(iii)$.
\end{proof}

We saw with Proposition \ref{mMinimiseCritere} that for all $Q\in\mathcal Q$, $\int_0^1\int_\R\vert F_\nu^{-1}(u)-y\vert\,\widetilde m^Q(u,dy)\,du=W_1(\mu,\nu)$. The next proposition shows that the inverse transform martingale coupling and its nonincreasing twin, when it exists, play particular roles among the martingale couplings which derive from $\mathcal Q$ when $\vert F_\nu^{-1}(u)-y\vert$ is replaced by $\vert F_\nu^{-1}(u)-y\vert^\rho$ with $\rho\in\R$.

\begin{prop2}\label{thmpropertyMaxMinITMC} Let $\mu,\nu\in\mathcal P_1(\R)$ be such that $\mu<_{cx}\nu$. For all $\rho\in\R$ and for any Markov kernel $(\widetilde m(u,dy))_{u\in(0,1)}$, let $\mathcal C_\rho(\widetilde m)$ be defined by
\begin{equation}
\mathcal C_\rho(\widetilde m)=\int_{\R\times(0,1)}\vert F_\nu^{-1}(u)-y\vert^\rho\1_{\{y\neq F_\nu^{-1}(u)\}}\,\widetilde m(u,dy)\,du.
\end{equation}

Then, for all $Q\in\mathcal Q$,
\begin{align}\label{ITminimiseOuMaximise}
\begin{split}
\forall\rho\in(-\infty,1]\cup[2,+\infty),&\quad\mathcal C_\rho(\widetilde m^{IT})\le\mathcal C_\rho(\widetilde m^Q);\\
\forall\rho\in[1,2],&\quad\mathcal C_\rho(\widetilde m^Q)\le\mathcal C_\rho(\widetilde m^{IT});\\
\forall\rho\in\{1,2\},&\quad\mathcal C_\rho(\widetilde m^{IT})=\mathcal C_\rho(\widetilde m^Q).
\end{split}
\end{align}

Let us now assume that there exists $p\in(0,1)$ such that $u\mapsto\int_0^u(F_\mu^{-1}(v)-F_\nu^{-1}(v))\,dv$ is nondecreasing on $[0,p]$ and nonincreasing on $[p,1]$ and denote $(\widetilde m^{NIT}(u,dy))_{u\in(0,1)}$ for $(\widetilde m^{Q^{NIT}}(u,dy))_{u\in(0,1)}$. Then, for all $Q\in\mathcal Q$,
\begin{align}\label{NITminimiseOuMaximise}
\begin{split}
\forall\rho\in(-\infty,1]\cup[2,+\infty),&\quad\mathcal C_\rho(\widetilde m^Q)\le\mathcal C_\rho(\widetilde m^{NIT});\\
\forall\rho\in[1,2],&\quad\mathcal C_\rho(\widetilde m^{NIT})\le\mathcal C_\rho(\widetilde m^Q);\\
\forall\rho\in\{1,2\},&\quad\mathcal C_\rho(\widetilde m^{NIT})=\mathcal C_\rho(\widetilde m^Q).
\end{split}
\end{align}
\end{prop2}
\begin{rk} Let $\mu,\nu\in\mathcal P_1(\R)$ be such that $\mu<_{cx}\nu$. By Proposition \ref{thmpropertyMaxMinITMC} for $\rho=0$, we deduce that
	\[
	\sup_{Q\in\mathcal Q}\{\PP(Y=F_\nu^{-1}(U))\mid(U,Y)\sim\1_{(0,1)}(u)\,du\,\widetilde m^Q(u,dy)\}
	\]
	is attained for the inverse transform martingale coupling.
	
	Suppose in addition that $F_\nu^{-1}$ is constant on the intervals of the form $(F_\mu(x_-),F_\mu(x)]$. Let $M(dx,dy)=\mu(dx)\,m(x,dy)$ be a martingale coupling between $\mu$ and $\nu$. Let $(\widetilde m(u,dy))_{u\in(0,1)}$ be the kernel defined for all $u\in(0,1)$ by $\widetilde m(u,dy)=m(F_\mu^{-1}(u),dy)$.  Let $T$ be the Monge transport map. According to \eqref{LemmeMongeTransportMap}, $F_\nu^{-1}(u)=F_\nu^{-1}(F_\mu(F_\mu^{-1}(u)))$ for $du$-almost all $u\in(0,1)$. So by Lemma \ref{lienPetitmGrandM}, for all $\rho\in\R$,
	\begin{align*}
	\int_{\R\times\R}\vert y-T(x)\vert^\rho\1_{\{y\neq T(x)\}}\,\mu(dx)\,m(x,dy)&=\int_0^1\int_\R\vert y-T(F_\mu^{-1}(u))\vert^\rho\1_{\{y\neq T(F_\mu^{-1}(u))\}}\,\widetilde m(u,dy)\,du\\
	&=\int_0^1\int_\R\vert y-F_\nu^{-1}(u)\vert^\rho\1_{\{y\neq F_\nu^{-1}(u)\}}\,\widetilde m(u,dy)\,du.
	\end{align*}
	
	We deduce that the supremum of $\PP(Y=T(X))$ among all random variables $X$ and $Y$ such that $(X,Y)\sim M^Q$ for $Q\in\mathcal Q$ is attained for the inverse transform martingale coupling.
\end{rk}
\begin{proof}[Proof of Proposition \ref{thmpropertyMaxMinITMC}] Let $\rho\in\R$ and $Q\in\mathcal Q$. Let $\varepsilon>0$ and $f_\varepsilon:\R\to\R$ be defined for all $x\in\R$ by
	\[
	f_\varepsilon(x)=\varepsilon^{\rho-2}\left((\rho-1)x+(2-\rho)\varepsilon\right)\1_{\{x\le\varepsilon\}}+x^{\rho-1}\1_{\{x>\varepsilon\}}.
	\]
	
	It is clear that $f_\varepsilon$ is convex for $\rho\in(-\infty,1]\cup[2,+\infty)$ and concave for $\rho\in[1,2]$. Let $c_\varepsilon:(0,1)^2\to\R$ be the right-continuous function defined for all $(u,v)\in(0,1)^2$ by $c_\varepsilon(u,v)=f_\varepsilon(\vert F_\nu^{-1}(u_+)-F_\nu^{-1}(v_+)\vert)$.
	
	If $\rho\in(-\infty,1]\cup[2,+\infty)$, then $c_\varepsilon$ satisfies the Monge condition, that is for all $u,u',v,v'\in(0,1)$ such that $u\le u'$ and $v\le v'$,
	\[
	c_\varepsilon(u',v')-c_\varepsilon(u,v')-c_\varepsilon(u',v)+c_\varepsilon(u,v)\le0,
	\]
	which follows from the monotonicity of $F_\nu^{-1}$ and the fact that $(x,y)\mapsto f_\varepsilon(\vert x-y\vert)$ is convex and therefore satisfies the Monge condition. Since $Q$ has marginals $d\Psi_+/\gamma$ and $d\Psi_-/\gamma$, by Theorem 3.1.2 Chapter 3 \cite{RachevRuschendorf}, we have
	\[
	\int_0^1c_\varepsilon(\Psi_+^{-1}(\gamma u),\Psi_-^{-1}(\gamma u))\,du\le\int_{(0,1)^2}c_\varepsilon(u,v)\,Q(du,dv)\le\int_0^1c_\varepsilon(\Psi_+^{-1}(\gamma u),\Psi_-^{-1}(\gamma(1-u)))\,du.
	\]
	
	It is easy to check that for all $u,v\in(0,1)$, the map $(0,1)\ni\varepsilon\mapsto c_\varepsilon(u,v)$ is nonincreasing, bounded from below by $2-\rho$ and converges to $\vert F_\nu^{-1}(u_+)-F_\nu^{-1}(v_+)\vert^{\rho-1}$ when $\varepsilon\to0$ where by convention, we choose $0^0=1$ and for all $\alpha<0$ and $x=0$, we choose $x^\alpha$ equal to its limit $+\infty$ as $x\to0_+$. Therefore, by the monotone convergence theorem for $\varepsilon\to0$, we have
	\begin{multline}\label{doubleInegaliteConditionDeMonge}
	\forall\rho\in(-\infty,1]\cup[2,\infty),\quad\int_0^1\vert F_\nu^{-1}(\Psi_+^{-1}(\gamma u)_+)-F_\nu^{-1}(\Psi_-^{-1}(\gamma u)_+)\vert^{\rho-1}\,du\\
	\le \int_{(0,1)^2}\vert F_\nu^{-1}(u_+)-F_\nu^{-1}(v_+)\vert^{\rho-1}\,Q(du,dv)\\
	\le \int_0^1\vert F_\nu^{-1}(\Psi_+^{-1}(\gamma u)_+)-F_\nu^{-1}(\Psi_-^{-1}(\gamma(1-u))_+)\vert^{\rho-1}\,du.
	\end{multline}
	
	If $1\le\rho\le2$, then $(x,y)\mapsto f_\varepsilon(\vert x-y\vert)$ is concave so $-c_\varepsilon$ satisfies the Monge conditions and a symmetric reasoning shows that
	\begin{equation}\label{doubleEncadrementCrhoEpsilon}
	\int_0^1c_\varepsilon(\Psi_+^{-1}(\gamma u),\Psi_-^{-1}(\gamma(1-u)))\,du\le\int_{(0,1)^2}c_\varepsilon(u,v)\,Q(du,dv)\le\int_0^1c_\varepsilon(\Psi_+^{-1}(\gamma u),\Psi_-^{-1}(\gamma u))\,du.
	\end{equation}
	
	It is easy to check that for all $u,v\in(0,1)$, the map $(0,1)\ni\varepsilon\mapsto c_\varepsilon(u,v)$ is bounded from above by $1+\vert F_\nu^{-1}(u_+)-F_\nu^{-1}(v_+)\vert^{\rho-1}$ and converges to its lower bound $\vert F_\nu^{-1}(u_+)-F_\nu^{-1}(v_+)\vert^{\rho-1}$ when $\varepsilon\to0$. Consider one of the three integrals in \eqref{doubleEncadrementCrhoEpsilon}. If the pointwise limit for $\varepsilon\to0$ of its integrand is integrable, then we can apply the dominated convergence theorem. Otherwise, the integral is infinite for all $\varepsilon\in(0,1)$. Therefore, for $\varepsilon\to0$, we have
	\begin{multline}\label{doubleInegaliteConditionDeMonge2}
	\forall 1\le\rho\le2,\quad\int_0^1\vert F_\nu^{-1}(\Psi_+^{-1}(\gamma u)_+)-F_\nu^{-1}(\Psi_-^{-1}(\gamma(1-u))_+)\vert^{\rho-1}\,du\\
	\le \int_{(0,1)^2}\vert F_\nu^{-1}(u_+)-F_\nu^{-1}(v_+)\vert^{\rho-1}\,Q(du,dv)\\
	\le\int_0^1\vert F_\nu^{-1}(\Psi_+^{-1}(\gamma u)_+)-F_\nu^{-1}(\Psi_-^{-1}(\gamma u)_+)\vert^{\rho-1}\,du.
	\end{multline}
	
	For all $\rho\in\R$, applying \eqref{calculIntegralehMtilde} to the measurable and nonnegative function $h:y\mapsto\vert F_\nu^{-1}(u)-y\vert^\rho\1_{\{y\neq F_\nu^{-1}(u)\}}$ yields $du$-almost everywhere on $(0,1)$,
	\begin{align*}
	&\int_\R\vert F_\nu^{-1}(u)-y\vert^\rho\1_{\{y\neq F_\nu^{-1}(u)\}}\,\widetilde m^Q(u,dy)&\\
	&=\int_{(0,1)}\frac{(F_\mu^{-1}-F_\nu^{-1})^+(u)}{F_\nu^{-1}(v)-F_\nu^{-1}(u)}\vert F_\nu^{-1}(u)-F_\nu^{-1}(v)\vert^\rho\1_{\{F_\nu^{-1}(v)\neq F_\nu^{-1}(u)\}}\,\pi^Q_+(u,dv)\\
	&\phantom{=\ }+\int_{(0,1)}\frac{(F_\mu^{-1}-F_\nu^{-1})^-(u)}{F_\nu^{-1}(u)-F_\nu^{-1}(v)}\vert F_\nu^{-1}(u)-F_\nu^{-1}(v)\vert^\rho\1_{\{F_\nu^{-1}(v)\neq F_\nu^{-1}(u)\}}\,\pi^Q_-(u,dv),
	\end{align*}
	
	where according to Lemma \ref{Petitmdupp}, for $(F_\mu^{-1}-F_\nu^{-1})^+(u)\,du$-almost all $u\in(0,1)$, $\pi^Q_+(u,dv)$-a.e., $F_\nu^{-1}(v)>F_\nu^{-1}(u)$ and for $(F_\mu^{-1}-F_\nu^{-1})^-(u)\,du$-almost all $u\in(0,1)$, $\pi^Q_-(u,dv)$-a.e, $F_\nu^{-1}(v)<F_\nu^{-1}(u)$. We deduce that
	\begin{align}\label{expressionCrho}
	\begin{split}
	C_\rho(\widetilde m^Q)&=\int_{(0,1)^2}(F_\mu^{-1}-F_\nu^{-1})^+(u)\vert F_\nu^{-1}(u)-F_\nu^{-1}(v)\vert^{\rho-1}\,du\,\pi^Q_+(u,dv)\\
	&\phantom{=\ }+\int_{(0,1)^2}(F_\mu^{-1}-F_\nu^{-1})^-(u)\vert F_\nu^{-1}(u)-F_\nu^{-1}(v)\vert^{\rho-1}\,du\,\pi^Q_-(u,dv)\\
	&=2\gamma\int_{(0,1)^2}\vert F_\nu^{-1}(u)-F_\nu^{-1}(v)\vert^{\rho-1}\,Q(du,dv).
	\end{split}
	\end{align}
	
	Since the set of discontinuities of $F_\nu^{-1}$ is at most countable and since the marginals of $Q$ have densities, we have
	\begin{equation}\label{CrhoExpression}
	C_\rho(\widetilde m^Q)=2\gamma\int_{(0,1)^2}\vert F_\nu^{-1}(u_+)-F_\nu^{-1}(v_+)\vert^{\rho-1}\,Q(du,dv).
	\end{equation}
	
	Let us show that 
	\begin{equation}\label{CrhoIT}
	\mathcal C_\rho(\widetilde m^{IT})=2\gamma\int_0^1\vert F_\nu^{-1}(\Psi_+^{-1}(\gamma u)_+)-F_\nu^{-1}(\Psi_-^{-1}(\gamma u)_+)\vert^{\rho-1}\,du.
	\end{equation}
	
	By Lemma \ref{FmoinsUnrondF}, $\Psi_+^{-1}(\Psi_+(u))=u$, $d\Psi_+(u)$-almost everywhere on $(0,1)$, so using \eqref{expressionCrho}, Proposition \ref{prop410RevuzYor} and the fact that $0<\Psi_\pm^{-1}(u)<1$ for all $u\in(0,\gamma)$, we have
	\begin{align*}
	\mathcal C_\rho(\widetilde m^{IT})&=2\int_0^1(F_\mu^{-1}-F_\nu^{-1})^+(u)\vert F_\nu^{-1}(u)-F_\nu^{-1}(\varphi(u))\vert^{\rho-1}\1_{\{0<\varphi(u)<1\}}\,du\\
	&=2\int_0^1\vert F_\nu^{-1}(\Psi_+^{-1}(\Psi_+(u)))-F_\nu^{-1}(\Psi_-^{-1}(\Psi_+(u)))\vert^{\rho-1}\1_{\{0<\Psi_-^{-1}(\Psi_+(u))<1\}}\,d\Psi_+(u)\\
	&=2\int_0^\gamma \vert F_\nu^{-1}(\Psi_+^{-1}(u))-F_\nu^{-1}(\Psi_-^{-1}(u))\vert^{\rho-1}\1_{\{0<\Psi_-^{-1}(u)<1\}}\,du\\
	&=2\gamma\int_0^1\vert F_\nu^{-1}(\Psi_+^{-1}(\gamma u))-F_\nu^{-1}(\Psi_-^{-1}(\gamma u))\vert^{\rho-1}\,du.
	\end{align*}
	
	Since the set of discontinuities of $\Psi_+^{-1}$, $\Psi_-^{-1}$, $(\Psi_+\circ F_\nu)^{-1}=F_\nu^{-1}\circ\Psi_+^{-1}$ and $(\Psi_-\circ F_\nu)^{-1}=F_\nu^{-1}\circ\Psi_-^{-1}$ are at most countable, we get that for $du$-almost all $u\in(0,1)$, $F_\nu^{-1}(\Psi_+^{-1}(\gamma u))=F_\nu^{-1}\circ\Psi_+^{-1}(\gamma u_+)=F_\nu^{-1}(\Psi_+^{-1}(\gamma u)_+)$ and $F_\nu^{-1}(\Psi_-^{-1}(\gamma u))=F_\nu^{-1}(\Psi^{-1}(\gamma u)_+)$, which proves \eqref{CrhoIT}. Then \eqref{ITminimiseOuMaximise} is deduced from \eqref{doubleInegaliteConditionDeMonge}, \eqref{doubleInegaliteConditionDeMonge2}, \eqref{CrhoExpression} and \eqref{CrhoIT}. 
	
	Assume now that there exists $p\in(0,1)$ such that $u\mapsto\int_0^u(F_\mu^{-1}(v)-F_\nu^{-1}(v))\,dv$ is nondecreasing on $[0,p]$ and nonincreasing on $[p,1]$. For all $u\in(0,1)$, $\chi_+(u)=\Psi_+(u)$ and $\chi_-(u)=\gamma-\Psi_-(1-u)$. If $U$ is a random variable uniformly distributed on $(0,1)$, one can easily check that $1-\Psi_-^{-1}(\gamma(1-U))$ has distribution $d\chi_-/\gamma$. Since $u\mapsto 1-\Psi_-^{-1}(\gamma(1-u))$ is nondecreasing, it is shown in Lemma A.3 \cite{ApproxMOTJourdain2} that $1-\Psi_-^{-1}(\gamma(1-u))=\chi_-^{-1}(\gamma u)$, $du$-almost everywhere on $(0,1)$. So we show with similar arguments as above that
	\begin{equation}\label{CrhoNIT}
	\mathcal C_\rho(\widetilde m^{NIT})=2\gamma\int_0^1\vert F_\nu^{-1}(\Psi_+^{-1}(\gamma u)_+)-F_\nu^{-1}(\Psi_-^{-1}(\gamma(1-u))_+)\vert^{\rho-1}\,du.
	\end{equation}
	
	Then \eqref{NITminimiseOuMaximise} is deduced from \eqref{doubleInegaliteConditionDeMonge}, \eqref{doubleInegaliteConditionDeMonge2}, \eqref{CrhoExpression} and \eqref{CrhoNIT}.
\end{proof}

\subsection{Stability of the inverse transform martingale coupling with respect to the marginal laws}

In this section, we show that the inverse transform martingale coupling is stable with respect to its marginals $\mu$ and $\nu$ for the Wasserstein distance topology.

\begin{prop2} Let $\mu,\nu\in\mathcal P_1(\R)$ be such that $\mu<_{cx}\nu$. Let $(\mu_n)_{n\in\N}$ and $(\nu_n)_{n\in\N}$ be two sequences of probability measures on $\R$ with finite first moments such that for all $n\in\N$, $\mu_n<_{cx}\nu_n$. For all $n\in\N$, let $M^{IT}_n$ (resp. $M^{IT}$) be the inverse transform martingale coupling between $\mu_n$ and $\nu_n$ (resp. between $\mu$ and $\nu$).
	
	If $W_1(\mu_n,\mu)\underset{n\to+\infty}{\longrightarrow}0$ and $W_1(\nu_n,\nu)\underset{n\to+\infty}{\longrightarrow}0$, then
	\[
	W_1(M^{IT}_n,M^{IT})\underset{n\to+\infty}{\longrightarrow}0.
	\]
\end{prop2}
\begin{proof} For all $n\in\N$, let $\Psi_{n+}:u\in[0,1]\mapsto\int_0^u(F_{\mu_n}^{-1}-F_{\nu_n}^{-1})^+(v)\,dv$, $\Psi_{n-}:u\in[0,1]\mapsto\int_0^u(F_{\mu_n}^{-1}-F_{\nu_n}^{-1})^-(v)\,dv$, $\varphi_n=\Psi_{n-}^{-1}\circ\Psi_{n+}$ and $\widetilde\varphi_n=\Psi_{n+}^{-1}\circ\Psi_{n-}$. Let $(\widetilde m^{IT}_n(u,dy))_{u\in(0,1)}$ be the Markov kernel defined as in \eqref{defPetitmITMC} with $F_\mu^{-1},F_\nu^{-1},\varphi$ and $\widetilde\varphi$ respectively replaced by $F_{\mu_n}^{-1},F_{\nu_n}^{-1},\varphi_n$ and $\widetilde\varphi_n$, and let $(m^{IT}_n(u,dy))_{u\in(0,1)}$ be defined as in \eqref{defGrandM} with $\widetilde m$ replaced by $\widetilde m^{IT}_n$. Let $h:\R^2\to\R$ be a bounded and continuous function such that $h$ is Lipschitz continuous with respect to its second variable. Then by Lemma \ref{lienPetitmGrandM},
	\begin{align*}
	\int_{\R\times\R}h(x,y)\,\mu_n(dx)\,m^{IT}_n(x,dy)=\int_{(0,1)}\int_\R h(F_{\mu_n}^{-1}(u),y)\,\widetilde m^{IT}_n(u,dy)\,du.
	\end{align*}
	
	According to \eqref{calculIntegralehMtilde}, for any measurable and bounded function $\widetilde h:\R\to\R$ and for $du$-almost all $u\in(0,1)$,
	\begin{align*}
	\int_\R\widetilde h(y)\,\widetilde m^{IT}_n(u,dy)&=\widetilde h(F_{\nu_n}^{-1}(u))+\int_{(0,1)}\frac{(F_{\mu_n}^{-1}(u)-F_{\nu_n}^{-1}(u))^+}{F_{\nu_n}^{-1}(v)-F_{\nu_n}^{-1}(u)}(\widetilde h(F_{\nu_n}^{-1}(v))-\widetilde h(F_{\nu_n}^{-1}(u)))\,\pi^{IT}_{n+}(u,dv)\\
	&\phantom{=}+\int_{(0,1)}\frac{(F_{\mu_n}^{-1}(u)-F_{\nu_n}^{-1}(u))^-}{F_{\nu_n}^{-1}(u)-F_{\nu_n}^{-1}(v)}(\widetilde h(F_{\nu_n}^{-1}(v))-\widetilde h(F_{\nu_n}^{-1}(u)))\,\pi^{IT}_{n-}(u,dv),
	\end{align*}
	where $\pi^{IT}_{n+}(u,dv)=\1_{\{0<\varphi_n(u)<1\}}\,\delta_{\varphi_n(u)}(dv)$ and $\pi^{IT}_{n-}(v,du)=\1_{\{0<\widetildelow\varphi_n(v)<1\}}\,\delta_{\widetildelow\varphi_n(v)}(du)$, and all the integrands are well defined thanks to Lemma \ref{Petitmdupp}. According to \eqref{phiEntre0et1}, $0<\varphi_n(u)<1$, $(F_{\mu_n}^{-1}(u)-F_{\nu_n}^{-1}(u))^+\,du$-almost everywhere and $0<\widetilde\varphi_n(u)<1$, $(F_{\mu_n}^{-1}(u)-F_{\nu_n}^{-1}(u))^-\,du$ almost-everywhere. So
	\begin{align*}
	&\int_{(0,1)}\int_\R h(F_{\mu_n}^{-1}(u),y)\,\widetilde m^{IT}_n(u,dy)\,du\\
	&=\int_{(0,1)}h(F_{\mu_n}^{-1}(u),F_{\nu_n}^{-1}(u))\,du\\
	&\phantom{=}+\int_{(0,1)}\frac{(F_{\mu_n}^{-1}(u)-F_{\nu_n}^{-1}(u))^+}{F_{\nu_n}^{-1}(\varphi_n(u))-F_{\nu_n}^{-1}(u)}(h(F_{\mu_n}^{-1}(u),F_{\nu_n}^{-1}(\varphi_n(u)))-h(F_{\mu_n}^{-1}(u),F_{\nu_n}^{-1}(u)))\,du\\
	&\phantom{=}+\int_{(0,1)}\frac{(F_{\mu_n}^{-1}(u)-F_{\nu_n}^{-1}(u))^-}{F_{\nu_n}^{-1}(u)-F_{\nu_n}^{-1}(\widetilde\varphi_n(u))}(h(F_{\mu_n}^{-1}(u),F_{\nu_n}^{-1}(\widetilde\varphi_n(u)))-h(F_{\mu_n}^{-1}(u),F_{\nu_n}^{-1}(u)))\,du.
	\end{align*}
	
	Since $\mu_n$ converges weakly towards $\mu$, then $F_{\mu_n}^{-1}(u)$ (resp. $F_{\nu_n}^{-1}(u)$) converges towards $F_\mu^{-1}(u)$ (resp. $F_\nu^{-1}(u)$) $du$-almost everywhere on $(0,1)$. Since $h$ is continuous and bounded, by the dominated convergence theorem,
	\[
	\int_{(0,1)}h(F_{\mu_n}^{-1}(u),F_{\nu_n}^{-1}(u))\,du\underset{n\to+\infty}{\longrightarrow}\int_{(0,1)}h(F_\mu^{-1}(u),F_\nu^{-1}(u))\,du.
	\]
	
	On the other hand, using Lemma \ref{FmoinsUnrondF} for the first equality, then Proposition \ref{prop410RevuzYor} for the second equality and the change of variables $u=\Psi_{n+}(1)v$ with the equality $\Psi_{n+}(1)=\Psi_{n-}(1)$ for the last equality, we have
	\begin{align*}
	&\int_{(0,1)}\frac{(F_{\mu_n}^{-1}(u)-F_{\nu_n}^{-1}(u))^+}{F_{\nu_n}^{-1}(\varphi_n(u))-F_{\nu_n}^{-1}(u)}(h(F_{\mu_n}^{-1}(u),F_{\nu_n}^{-1}(\varphi_n(u)))-h(F_{\mu_n}^{-1}(u),F_{\nu_n}^{-1}(u)))\,du\\
	&=\int_{(0,1)}\frac{h(F_{\mu_n}^{-1}(\Psi_{n+}^{-1}(\Psi_{n+}(u))),F_{\nu_n}^{-1}(\Psi_{n-}^{-1}(\Psi_{n+}(u))))-h(F_{\mu_n}^{-1}(\Psi_{n+}^{-1}(\Psi_{n+}(u))),F_{\nu_n}^{-1}(\Psi_{n+}^{-1}(\Psi_{n+}(u))))}{F_{\nu_n}^{-1}(\Psi_{n-}^{-1}(\Psi_{n+}(u)))-F_{\nu_n}^{-1}(\Psi_{n+}^{-1}(\Psi_{n+}(u)))}\,d\Psi_{n+}(u)\\
	&=\int_{(0,\Psi_{n+}(1))}\frac{h(F_{\mu_n}^{-1}(\Psi_{n+}^{-1}(u)),F_{\nu_n}^{-1}(\Psi_{n-}^{-1}(u)))-h(F_{\mu_n}^{-1}(\Psi_{n+}^{-1}(u)),F_{\nu_n}^{-1}(\Psi_{n+}^{-1}(u)))}{F_{\nu_n}^{-1}(\Psi_{n-}^{-1}(u))-F_{\nu_n}^{-1}(\Psi_{n+}^{-1}(u))}\,du\\
	&=\Psi_{n+}(1)\int_{(0,1)}\frac{h(F_{\mu_n}^{-1}(\Psi_{n+}^{-1}(\Psi_{n+}(1)v)),F_{\nu_n}^{-1}(\Psi_{n-}^{-1}(\Psi_{n-}(1)v)))-h(F_{\mu_n}^{-1}(\Psi_{n+}^{-1}(\Psi_{n+}(1)v)),F_{\nu_n}^{-1}(\Psi_{n+}^{-1}(\Psi_{n+}(1)v)))}{(F_{\nu_n}^{-1}(\Psi_{n-}^{-1}(\Psi_{n-}(1)v))-F_{\nu_n}^{-1}(\Psi_{n+}^{-1}(\Psi_{n+}(1)v)))}\,dv.
	\end{align*}
	
	Since $h$ is Lipschitz continuous with respect to its second variable, then the integrand above is bounded. Since for all $u\in[0,1]$, $x\mapsto x^+$ is Lipschitz continuous with constant $1$,
	\begin{align*}
	\vert\Psi_{n+}(u)-\Psi_+(u)\vert&\le\int_0^u\vert(F_{\mu_n}^{-1}-F_{\nu_n}^{-1})^+(v)-(F_\mu^{-1}-F_\nu^{-1})^+(v)\vert\,dv\\
	&\le\int_0^u\vert F_{\mu_n}^{-1}(v)-F_\mu^{-1}(v)\vert\,dv+\int_0^u\vert F_{\nu_n}^{-1}(v)-F_\nu^{-1}(v)\vert\,dv\\
	&\le W_1(\mu_n,\mu)+W_1(\nu_n,\nu),
	\end{align*} 
	so $\Psi_{n+}$ converges uniformly to $\Psi_+$ on $[0,1]$. Moreover, for all $x\in\R$, $\vert\Psi_{n+}(F_{\mu_n}(x))-\Psi_+(F_\mu(x))\vert\le\sup_{[0,1]}\vert\Psi_{n+}-\Psi_+\vert+\vert\Psi_+(F_{\mu_n}(x))-\Psi_+(F_\mu(x))\vert$, so $\Psi_{n+}(F_{\mu_n}(x))/\Psi_{n+}(1)\underset{n\to+\infty}{\to}\Psi_+(F_\mu(x))/\Psi_+(1)$ for all $x\in\R$ outside the at most countable set of discontinuities of $F_\mu$. This implies that $d(\Psi_{n+}(F_{\mu_n}(x))/\Psi_{n+}(1))$ converges to $d(\Psi_+(F_\mu(x))/\Psi_+(1))$ for the weak convergence topology. We deduce the pointwise convergence of the left continuous pseudo-inverses $du$-almost everywhere on $(0,1)$, that is $F_{\mu_n}^{-1}(\Psi_{n+}^{-1}(\Psi_{n+}(1)u))\underset{n\to+\infty}{\longrightarrow}F_\mu^{-1}(\Psi_+^{-1}(\Psi_+(1)u))$ for $du$-almost all $u\in(0,1)$. In the same way, $F_{\nu_n}^{-1}(\Psi_{n+}^{-1}(\Psi_{n+}(1)u))\underset{n\to+\infty}{\longrightarrow}F_\nu^{-1}(\Psi_+^{-1}(\Psi_+(1)u))$ and $F_{\nu_n}^{-1}(\Psi_{n-}^{-1}(\Psi_{n-}(1)u))\underset{n\to+\infty}{\longrightarrow}F_\nu^{-1}(\Psi_-^{-1}(\Psi_-(1)u))$ for $du$-almost all $u\in(0,1)$. Therefore, by the dominated convergence theorem,
	\begin{align*}
	&\int_{(0,1)}\frac{(F_{\mu_n}^{-1}(u)-F_{\nu_n}^{-1}(u))^+}{F_{\nu_n}^{-1}(\varphi_n(u))-F_{\nu_n}^{-1}(u)}(h(F_{\mu_n}^{-1}(u),F_{\nu_n}^{-1}(\varphi_n(u)))-h(F_{\mu_n}^{-1}(u),F_{\nu_n}^{-1}(u)))\,du\\
	&\underset{n\to+\infty}{\longrightarrow}\Psi_{+}(1)\int_{(0,1)}\frac{h(F_{\mu}^{-1}(\Psi_{+}^{-1}(\Psi_{+}(1)v)),F_{\nu}^{-1}(\Psi_{-}^{-1}(\Psi_{-}(1)v)))-h(F_{\mu}^{-1}(\Psi_{+}^{-1}(\Psi_{+}(1)v)),F_{\nu}^{-1}(\Psi_{+}^{-1}(\Psi_{+}(1)v)))}{(F_{\nu}^{-1}(\Psi_{-}^{-1}(\Psi_{-}(1)v))-F_{\nu}^{-1}(\Psi_{+}^{-1}(\Psi_{+}(1)v)))}\,dv\\
	&=\int_{(0,1)}\frac{(F_{\mu}^{-1}(u)-F_{\nu}^{-1}(u))^+}{F_{\nu}^{-1}(\varphi(u))-F_{\nu}^{-1}(u)}(h(F_{\mu}^{-1}(u),F_{\nu}^{-1}(\varphi(u)))-h(F_{\mu}^{-1}(u),F_{\nu}^{-1}(u)))\,du.
	\end{align*}
	
	We can show in the same way that
	\begin{align*}
	&\int_{(0,1)}\frac{(F_{\mu_n}^{-1}(u)-F_{\nu_n}^{-1}(u))^-}{F_{\nu_n}^{-1}(u)-F_{\nu_n}^{-1}(\widetilde\varphi_n(u))}(h(F_{\mu_n}^{-1}(u),F_{\nu_n}^{-1}(\widetilde\varphi_n(u)))-h(F_{\mu_n}^{-1}(u),F_{\nu_n}^{-1}(u)))\,du\\
	&\underset{n\to+\infty}{\longrightarrow}\int_{(0,1)}\frac{(F_{\mu}^{-1}(u)-F_{\nu}^{-1}(u))^-}{F_{\nu}^{-1}(u)-F_{\nu}^{-1}(\widetilde\varphi(u))}(h(F_{\mu}^{-1}(u),F_{\nu}^{-1}(\widetilde\varphi(u)))-h(F_{\mu}^{-1}(u),F_{\nu}^{-1}(u)))\,du.
	\end{align*}
	
	Finally, we showed that
	\[
	\int_{\R\times\R}h(x,y)\,\mu_n(dx)\,m^{IT}_n(x,dy)\underset{n\to+\infty}{\longrightarrow}\int_{\R\times\R}h(x,y)\,\mu(dx)\,m^{IT}(x,dy),
	\]
	for any bounded and continuous function $h:\R^2\to\R$ which is Lipschitz continuous with respect to its second variable, that is $M^{IT}_n\underset{n\to+\infty}{\longrightarrow}M^{IT}$ for the weak convergence topology. Since the convergence for the Wasserstein distance topology is equivalent to the convergence for the weak convergence topology and the convergence of the first order moments (see for instance Theorem 6.9 Chapter 6 \cite{VillaniOT}), $\int_\R\vert x\vert\,\mu_n(dx)\underset{n\to+\infty}{\longrightarrow}\int_\R\vert x\vert\,\mu(dx)$ and $\int_\R\vert y\vert\,\nu_n(dy)\underset{n\to+\infty}{\longrightarrow}\int_\R\vert y\vert\,\nu(dy)$.
	
	Therefore, $W_1(M^{IT}_n,M^{IT})\underset{n\to+\infty}{\longrightarrow}0$ when $\R^2$ is endowed with the $L^1$-norm. Since all norms on $\R^2$ are equivalent, $W_1(M^{IT}_n,M^{IT})\underset{n\to+\infty}{\longrightarrow}0$ when $\R^2$ is endowed with any norm.
\end{proof}

\section{On the uniqueness of martingale couplings parametrised by \texorpdfstring{$\mathcal Q$}{Q}}
\label{subsec:InfiniteAmountMartingaleCouplings}

Let $\mu,\nu\in\mathcal P_1(\R)$ be such that $\mu<_{cx}\nu$. A direct consequence of Proposition \ref{couplagesQconvexes} is that the set of martingale couplings between $\mu$ and $\nu$ parametrised by $\mathcal Q$ is either a singleton, or uncountably infinite. Since $\mathcal Q$ is convex, we deduce from Proposition \ref{QZeta} below that $\mathcal Q$ is infinite as soon as $\mu<_{cx}\nu$. When $\mu$ and $\nu$ are such that $F_\mu$ and $F_\nu$ are continuous, Corollary \ref{corInfiniteCouplagesMartingalesQ2} below ensures that there exist uncountably many martingale couplings between $\mu$ and $\nu$ parametrised by $\mathcal Q$. However this does not necessarily hold in the general case. We saw that when $\nu$ is reduced to two atoms only, there exists a unique martingale coupling between $\mu$ and $\nu$. Suppose now that the comonotonous coupling is a martingale coupling between $\mu$ and $\nu$, and $\mu,\nu\in\mathcal P_2(\R)$. For any martingale coupling $M\in\Pi^{\mathrm M}(\mu,\nu)$, we have $\int_{\R\times\R}\vert x-y\vert^2\,M(dx,dy)=\int_\R y^2\,\nu(dy)-\int_\R x^2\,\mu(dx)$. So all the martingale couplings between $\mu$ and $\nu$ yield the same quadratic cost. In particular, they yield the same quadratic cost as the comonotonous coupling, which is the only minimiser of the quadratic cost among $\Pi(\mu,\nu)$. So the comonotonous coupling is the only martingale coupling between $\mu$ and $\nu$. The next proposition states that this conclusion still holds when $\mu$ and $\nu$ only have finite first order moments.

\begin{prop2}\label{CouplageComonotoneMartImpliqueUniqueMart} Let $\mu,\nu\in\mathcal P_1(\R)$ be such that $\mu<_{cx}\nu$. If the comonotonous coupling between $\mu$ and $\nu$ is a martingale coupling, that is for $U$ a random variable uniformly distributed on $(0,1)$,
	\[
	\E[F_\nu^{-1}(U)\vert F_\mu^{-1}(U)]=F_\mu^{-1}(U)\quad\textrm{almost surely},
	\]
	then it is the only martingale coupling between $\mu$ and $\nu$.
\end{prop2}
\begin{proof} Let $U$ be a random variable uniformly distributed on $(0,1)$. The couple $(U,F_\nu^{-1}(U))$ is distributed according to $\1_{(0,1)}(u)\,du\,\delta_{F_\nu^{-1}(u)}(dy)$. By Lemma \ref{lienPetitmGrandM} applied with the Markov kernel $(\widetilde m(u,dy))_{u\in(0,1)}=(\delta_{F_\nu^{-1}(u)}(dy))_{u\in(0,1)}$, we get that $(F_\mu^{-1}(U),F_\nu^{-1}(U))$ is distributed according to $\mu(dx)\,m(x,dy)$ where $(m(x,dy))_{x\in\R}$ is given by \eqref{defGrandM}. By Lemma \ref{FcomprisEntre0et1} combined with the inverse transform sampling and \eqref{defAlternativeGrandM}, we get that $(F_\mu^{-1}(U),F_\nu^{-1}(U))$ is distributed according to $\mu(dx)\,\int_{v=0}^1\delta_{F_\nu^{-1}(F_\mu(x_-)+v(F_\mu(x)-F_\mu(x_-)))}(dy)\,dv$. So almost surely,
	\begin{align*}
	F_\mu^{-1}(U)&=\E\left[F_\nu^{-1}(U)\vert F_\mu^{-1}(U)\right]\\
	&=\int_{v=0}^1\left(\int_{y\in\R}y\,\delta_{F_\nu^{-1}(F_\mu(F_\mu^{-1}(U)_-)+v(F_\mu(F_\mu^{-1}(U))-F_\mu(F_\mu^{-1}(U)_-)))}(dy)\right)\,dv\\
	&=\int_0^1F_\nu^{-1}(F_\mu(F_\mu^{-1}(U)_-)+v(F_\mu(F_\mu^{-1}(U))-F_\mu(F_\mu^{-1}(U)_-)))\,dv.
	\end{align*}
	
	By the inverse transform sampling, we deduce that for $\mu(dx)$-almost all $x\in\R$,
	\begin{equation}\label{contrainteMartingaleComonotonousCoupling}
	\int_0^1F_\nu^{-1}(F_\mu(x_-)+v(F_\mu(x)-F_\mu(x_-)))\,dv=x.
	\end{equation}
	
	Let $(\underline t_n,\overline t_n)_{1\le n\le N}$ denote the irreducible components of $(\mu,\nu)$, whose definition is given by \eqref{defComposantesIrreductibles}. We recall that the choice of any martingale coupling $M$ between $\mu$ and $\nu$ reduces to the choice of a sequence of martingale couplings $(M_n)_{1\le n\le N}$ such that for all $1\le n\le N$, $M_n$ is a martingale coupling between the probability measures $\mu_n$ and $\nu_n$ defined by \eqref{defNuNComposantesIrreductibles}. If for each $n$, $\mu_n$ reduces to a single atom, then we necessarily have $M_n(dx,dy)=\mu_n(dx)\,\nu_n(dy)$, so there is a unique choice of the sequence $(M_n)_{1\le n\le N}$ and therefore $M$, which is the comonotonous coupling. Let us then prove that $\mu_n$ reduces to a single atom.
	
	Let $I$ be the at most countable set of $x\in\R$ such that $\mu(\{x\})>0$ and $F_\nu^{-1}$ is nonconstant on $(F_\mu(x_-),F_\mu(x)]$. Let us show that
	\begin{equation}\label{composantesIrreductiblesComonotonousCoupling}
	\bigcup_{x\in I}(F_\mu(x_-),F_\mu(x))=\bigcup_{n=1}^N\left(F_\mu(\underline t_n),F_\mu((\overline t_n)_-)\right).
	\end{equation}
	
	By Lemma A.8 \cite{ApproxMOTJourdain2}, we have
	\begin{equation}\label{imageComposantesIrreductiblesFmu}
	\bigcup_{n=1}^N\left(F_\mu(\underline t_n),F_\mu((\overline t_n)_-)\right)=\left\{u\in(0,1)\mid\int_0^uF_\mu^{-1}(v)\,dv>\int_0^uF_\nu^{-1}(v)\,dv\right\}.
	\end{equation}
	
	Let $u\in(0,1)$. Suppose first that there exists $t\in\R$ such that $u=F_\mu(t)$. We recall that $(F_\mu^{-1}(U),F_\nu^{-1}(U))$ is distributed according to $\mu(dx)\,\int_{v=0}^1\delta_{F_\nu^{-1}(F_\mu(x_-)+v(F_\mu(x)-F_\mu(x_-)))}(dy)\,dv$. So
	\begin{align*}
	\int_0^{F_\mu(t)}F_\nu^{-1}(v)\,dv&=\int_0^1\1_{\{v\le F_\mu(t)\}}F_\nu^{-1}(v)\,dv=\int_0^1\1_{\{F_\mu^{-1}(v)\le t\}}F_\nu^{-1}(v)\,dv\\
	&=\int_{x\in\R}\left(\int_{v=0}^1\1_{\{x\le t\}}\left(\int_{y\in\R}y\,\delta_{F_\nu^{-1}(F_\mu(x_-)+v(F_\mu(x)-F_\mu(x_-)))}(dy)\right)\,dv\right)\,\mu(dx)\\
	&=\int_{x\in\R}\1_{\{x\le t\}}\left(\int_{v=0}^1F_\nu^{-1}(F_\mu(x_-)+v(F_\mu(x)-F_\mu(x_-)))\,dv\right)\,\mu(dx)\\
	&=\int_{x\in\R}\1_{\{x\le t\}}x\,\mu(dx)=\int_0^1\1_{\{F_\mu^{-1}(v)\le t\}}F_\mu^{-1}(v)\,dv=\int_0^{F_\mu(t)}F_\mu^{-1}(v)\,dv,
	\end{align*}
	where we used \eqref{contrainteMartingaleComonotonousCoupling} for the fifth equality and the inverse transform sampling for the sixth equality. By continuity, we also deduce that for all $t\in\R$, $\int_0^{F_\mu(t_-)}F_\nu^{-1}(v)\,dv=\int_0^{F_\mu(t_-)}F_\mu^{-1}(v)\,dv$.
	
	Suppose now that there exists $x\in\R$ in the set of discontinuities of $F_\mu$ such that $F_\mu(x_-)<u<F_\mu(x)$. According to \eqref{contrainteMartingaleComonotonousCoupling}, we have $\int_{F_\mu(x_-)}^{F_\mu(x)}F_\nu^{-1}(v)\,dv=\mu(\{x\})x=\int_{F_\mu(x_-)}^{F_\mu(x)}x\,dv=\int_{F_\mu(x_-)}^{F_\mu(x)}F_\mu^{-1}(v)\,dv$. 
	
	If $F_\nu^{-1}$ is constant on $(F_\mu(x_-),F_\mu(x)]$, then for all $v\in(F_\mu(x_-),F_\mu(x)]$, $F_\nu^{-1}(v)=x=F_\mu^{-1}(v)$, so
	\begin{align*}
	\int_0^uF_\nu^{-1}(v)\,dv&=\int_0^{F_\mu(x_-)}F_\nu^{-1}(v)\,dv+\int_{F_\mu(x_-)}^uF_\nu^{-1}(v)\,dv=\int_0^{F_\mu(x_-)}F_\mu^{-1}(v)\,dv+\int_{F_\mu(x_-)}^{u}F_\mu^{-1}(v)\,dv\\
	&=\int_0^{u}F_\mu^{-1}(v)\,dv.
	\end{align*}
	
	If $F_\nu^{-1}$ is nonconstant on $(F_\mu(x_-),F_\mu(x)]$, then using the monotonicity of $F_\nu^{-1}$, one can easily show that for all $u\in(F_\mu(x_-),F_\mu(x))$,
	\[
	\frac{1}{u-F_\mu(x_-)}\int_{F_\mu(x_-)}^{u}F_\nu^{-1}(v)\,dv<\frac{1}{F_\mu(x)-F_\mu(x_-)}\int_{F_\mu(x_-)}^{F_\mu(x)}F_\nu^{-1}(v)\,dv.
	\]
	
	We deduce that for all $u\in(F_\mu(x_-),F_\mu(x))$,
	\[
	\int_{F_\mu(x_-)}^{u}F_\nu^{-1}(v)\,dv<\frac{u-F_\mu(x_-)}{F_\mu(x)-F_\mu(x_-)}x\mu(\{x\})=(u-F_\mu(x_-))x=\int_{F_\mu(x_-)}^{u}x\,dv=\int_{F_\mu(x_-)}^{u}F_\mu^{-1}(v)\,dv,
	\]
	and $\int_0^uF_\mu^{-1}(v)\,dv>\int_0^uF_\nu^{-1}(v)\,dv$. With \eqref{imageComposantesIrreductiblesFmu}, we deduce \eqref{composantesIrreductiblesComonotonousCoupling}. Since the intervals $((\underline t_n,\overline t_n))_{1\le n\le N}$ are disjoint, the intervals $((F_\mu(\underline t_n),F_\mu((\overline t_n)_-)))_{1\le n\le N}$ are disjoint as well. By equality of unions of disjoint intervals, we proved that for all $1\le n\le N$, there exists $x\in I$ such that $(F_\mu(\underline t_n),F_\mu((\overline t_n)_-))=(F_\mu(x_-),F_\mu(x))$. So $x\in(\underline t_n,\overline t_n)$ and $\mu((\underline t_n,\overline t_n))=F_\mu((\overline t_n)_-)-F_\mu(\underline t_n)=F_\mu(x)-F_\mu(x_-)=\mu(\{x\})$. So $\mu_n=\delta_x$, and the discussion above concludes that there exists only one martingale coupling between $\mu$ and $\nu$, namely the comonotonous coupling.	
\end{proof}
 We saw in Section \ref{sousSec:DefITMC} that we can build a nonincreasing twin of the inverse transform martingale coupling (see \eqref{QnonincreasingITMC}) as soon as the two marginals satisfy the assumption in Proposition \ref{QNITappartientAQ}. This corresponds to a general inversion of the monoticity of $\varphi$ on $(0,1)$. In the general case, such an inversion is not possible on $(0,1)$, but can be made locally.

Let $\mu,\nu\in\mathcal P_1(\R)$ be such that $\mu<_{cx}\nu$. Since $\mu\neq\nu$, there exists $u\in(0,1)$ such that $\Psi_+(u)>\Psi_-(u)$. Let $v=\Psi_+^{-1}(\Psi_+(u))$. Then $\Psi_+(v)=\Psi_+(\Psi_+^{-1}(\Psi_+(u))=\Psi_+(u)$ so that $v>0$ and $\Psi_+(v)>\Psi_-(u)\ge\Psi_-(v)$. By left-continuity of $\Psi_-$ and $\Psi_+$, there exists $\eta\in(0,v)$ such that $\Psi_+(w)>\Psi_-(w)$ for all $w\in[v-\eta,v]$. By definition of $v$, we have $\Psi_+(v-\eta)<\Psi_+(v)$, so there exists $u_0\in(v-\eta,v)$ such that $(F_\mu^{-1}-F_\nu^{-1})^+(u_0)>0$. Since $u_0\in(v-\eta,v)$, we have $\Psi_+(v)>\Psi_+(u_0)>\Psi_-(u_0)$ so $1>\varphi(u_0)>u_0$ according to \eqref{phiPlusPetitQueU}. By left-continuity of $F_\mu^{-1}$, $F_\nu^{-1}$ and $\varphi$, there exists $\varepsilon\in(0,u_0)$ such that 
\begin{equation}\label{defU0Epsilon0}
\forall u\in[u_0-\varepsilon,u_0],\quad1>\varphi(u)>u_0\quad\textrm{and}\quad F_\mu^{-1}(u)>F_\nu^{-1}(u).
\end{equation}

Since $(u_0-\varepsilon,u_0]\subset\mathcal U_+$, $\Psi_+$ is increasing and is therefore one-to-one onto from $(u_0-\varepsilon,u_0]$ to $(\Psi_+(u_0-\varepsilon),\Psi_+(u_0)]$. Since the set of discontinuities of $\Psi_-^{-1}$ is at most countable, up to choosing $\varepsilon$ smaller, we may also suppose that in addition to \eqref{defU0Epsilon0}, $\varepsilon$ is such that $\Psi_-^{-1}$ is continuous in $\Psi_+(u_0-\varepsilon)$.
Let then $\zeta:[0,1]\to[0,1]$ and $\widetilde\zeta:[0,1]\to[0,1]$ be defined for all $u\in(0,1)$ by
\begin{equation}\label{defZeta}
\zeta(u)=\Psi_-^{-1}(G(\Psi_+(u)))\quad\textrm{and}\quad\widetilde\zeta(u)=\Psi_+^{-1}(G(\Psi_-(u))),
\end{equation}
where $G:u\mapsto u\1_{(u_0-\varepsilon,u_0]^\complement}(\Psi_+^{-1}(u))+\left(\Psi_+(u_0)-u+\Psi_+(u_0-\varepsilon)\right)\1_{(u_0-\varepsilon,u_0]}(\Psi_+^{-1}(u))$.
Let $Q^\zeta$ be the measure defined on $(0,1)^2$ by
\begin{equation}\label{QITMCinversionLocale}
Q^\zeta(du,dv)=\frac1\gamma\left(F_\mu^{-1}-F_\nu^{-1}\right)^+(u)\,du\,\pi^\zeta_+(u,dv)\quad\textrm{where $\pi^\zeta_+(u,dv)=\1_{\{0<\zeta(u)<1\}}\,\delta_{\zeta(u)}(dv)$},
\end{equation}
with $\gamma=\Psi_-(1)=\Psi_+(1)$.

\begin{prop2}\label{QZeta} Let $\mu,\nu\in\mathcal P_1(\R)$ be such that $\mu<_{cx}\nu$. The measure $Q^\zeta$ defined by \eqref{QITMCinversionLocale} is an element of $\mathcal Q$. Moreover,
	\[
	Q^\zeta(du,dv)=\frac{1}{\gamma}(F_\mu^{-1}-F_\nu^{-1})^-(v)\,dv\,\pi^\zeta_-(v,du)\quad\textrm{where $\pi^\zeta_-(v,du)=\1_{\{0<\widetildemid{\zeta}(v)<1\}}\,\delta_{\widetildemid\zeta(v)}(du)$}.
	\]
\end{prop2}

As said above, $\Psi_+$ is one-to-one onto from $(u_0-\varepsilon,u_0]$ to $(\Psi_+(u_0-\varepsilon),\Psi_+(u_0)]$. So, for all $u\in(u_0-\varepsilon,u_0]$, $\Psi_+^{-1}(\Psi_+(u))=u$ and $G(\Psi_+(u))=\Psi_+(u_0)-\Psi_+(u)+\Psi_+(u_0-\varepsilon)$. So
\begin{equation}\label{defZetaAvantU0}
\forall u\in(u_0-\varepsilon,u_0],\quad\zeta(u)=\Psi_-^{-1}(\Psi_+(u_0)-\Psi_+(u)+\Psi_+(u_0-\varepsilon)).
\end{equation}

Since $\Psi_-$ is continuous, $\Psi_-^{-1}$ is one-to-one. Moreover, $\Psi_+$ is increasing on $(u_0-\varepsilon,u_0]$, so for all $u\in(u_0-\varepsilon,u_0]\backslash\{\Psi_+^{-1}(\frac{\Psi_+(u_0)+\Psi_+(u_0-\varepsilon)}{2})\}$, $\zeta(u)\neq\varphi(u)$. Since $(u_0-\varepsilon,u_0]\subset\mathcal U_+$, considering the first marginal of $Q^\zeta$ and $Q^{IT}$, we deduce that $Q^\zeta\neq Q^{IT}$. As a direct consequence of the convexity of $\mathcal Q$, we deduce that $\mathcal Q$ is uncountably infinite.

\begin{cor2} Let $\mu,\nu\in\mathcal P_1(\R)$ be such that $\mu<_{cx}\nu$. Then $\mathcal Q$ is uncountably infinite.
\end{cor2}

\begin{proof}[Proof of Proposition \ref{QZeta}] Let $h:(0,1)^2\to\R$ be a measurable and bounded function. We have
	\begin{align}\label{demoQITMCinversionLocale2}
	\begin{split}
	\int_{(0,1)^2}h(u,v)\,Q^\zeta(du,dv)&=\frac1\gamma\int_{(0,1)^2}h(u,v)(F_\mu^{-1}-F_\nu^{-1})^+(u)\1_{\{0<\zeta(u)<1\}}\,\delta_{\zeta(u)}(dv)\,du\\
	&=\frac1\gamma\int_0^1h(u,\zeta(u))\1_{\{0<\zeta(u)<1\}}\,d\Psi_+(u)\\
	&=\frac1\gamma\int_0^1h(\Psi_+^{-1}(\Psi_+(u)),\zeta(u))\1_{\{0<\zeta(u)<1\}}\1_{\{0<u<1\}}\,d\Psi_+(u),
	\end{split}
	\end{align}
	where the last equality is a consequence of Lemma \ref{FmoinsUnrondF}. By Proposition \ref{prop410RevuzYor},
	\[
	\int_{(0,1)^2}h(u,v)\,Q^\zeta(du,dv)=\frac1\gamma\int_0^{\Psi_+(1)}h(\Psi_+^{-1}(u),\Psi_-^{-1}(G(u)))\1_{\{0<\Psi_-^{-1}(G(u))<1\}}\1_{\{0<\Psi_+^{-1}(u)<1\}}\,du.
	\]
	
	By Lemma \ref{FmoinsUnrondF}, for all $u\in(0,\Psi_+(1))$, $u_0-\varepsilon<\Psi_+^{-1}(u)\le u_0\iff\Psi_+(u_0-\varepsilon)<u\le\Psi_+(u_0)$. Hence $G$ is a piecewise affine function which satisfies $G(G(u))=u$ for all $u\in(0,\Psi_+(1))\backslash\{\Psi_+(u_0)\}$ and $G(G(\Psi_+(u_0)))=\Psi_+(u_0-\varepsilon)$. So by the change of variables $w=G(u)$, we have
	\begin{equation}\label{demoQITMCinversionLocale1}
	\int_{(0,1)^2}h(u,v)\,Q^\zeta(du,dv)=\frac1\gamma\int_0^{\Psi_+(1)}h(\Psi_+^{-1}(G(w)),\Psi_-^{-1}(w))\1_{\{0<\Psi_-^{-1}(w)<1\}}\1_{\{0<\Psi_+^{-1}(G(w))<1\}}\,dw.
	\end{equation}
	
	By continuity of $\Psi_-$ and Proposition \ref{prop410RevuzYor}, using that $\Psi_+(1)=\Psi_-(1)$, we have
	\begin{align*}
	\int_{(0,1)^2}h(u,v)\,Q^\zeta(du,dv)&=\frac1\gamma\int_0^1h(\Psi_+^{-1}(G(\Psi_-(u))),\Psi_-^{-1}(\Psi_-(u)))\1_{\{0<\Psi_-^{-1}(\Psi_-(u))<1\}}\1_{\{0<\Psi_+^{-1}(G(\Psi_-(u)))<1\}}\,d\Psi_-(u)\\
	&=\frac1\gamma\int_0^1h(\widetilde\zeta(u),u)\1_{\{0<\widetildemid\zeta(u)<1\}}\,d\Psi_-(u),
	\end{align*}
	where we used for the last equality that $\Psi_-^{-1}(\Psi_-(u))=u$, $d\Psi_-(u)$-almost everywhere on $(0,1)$ according to Lemma \ref{FmoinsUnrondF}.
	
	Hence $Q(du,dv)=\frac{1}{\gamma}(F_\mu^{-1}-F_\nu^{-1})^-(v)\,dv\,\pi^\zeta_-(v,du)$ where $\pi^\zeta_-(v,du)=\1_{\{0<\widetildemid{\zeta}(v)<1\}}\,\delta_{\widetildemid\zeta(v)}(du)$.
	
	For $h:(u,v)\mapsto1$, \eqref{demoQITMCinversionLocale1} writes
	\[
	Q^\zeta((0,1)^2)=\frac1\gamma\int_0^{\Psi_+(1)}\1_{\{0<\Psi_-^{-1}(w)<1\}}\1_{\{0<\Psi_+^{-1}(G(w))<1\}}\,dw.
	\]
	
	By continuity of $\Psi_-$, Proposition \ref{prop410RevuzYor} and Lemma \ref{FmoinsUnrondF}, $\int_0^{\Psi_+(1)}\1_{\{0<\Psi_-^{-1}(w)<1\}}\,dw=\int_0^1\1_{\{0<\Psi_-^{-1}(\Psi_-(w))<1\}}\,d\Psi_-(w)=\int_0^1d\,\Psi_-(u)=\Psi_-(1)=\Psi_+(1)$. So $0<\Psi_-^{-1}(w)<1$, $dw$-almost everywhere on $(0,\Psi_+(1))$. By a similar reasoning, $0<\Psi_+^{-1}(w)<1$ for $dw$-almost all $w\in(0,\Psi_+(1))$. Since $G$ is piecewise affine and bijective from $(0,\Psi_+(1))\backslash\{\Psi_+(u_0)\}$ to itself, $0<\Psi_+^{-1}(G(w))<1$ for $dw$-almost all $w\in(0,\Psi_+(1))$. Hence
	\[
	Q^\zeta((0,1)^2)=\frac1\gamma\int_0^{\Psi_+(1)}\,dw=1,
	\]
	so $Q^\zeta$ is a probability measure, with first marginal $\frac{1}{\gamma}(F_\mu^{-1}-F_\nu^{-1})^+(u)\,du$ and second marginal $\frac{1}{\gamma}(F_\mu^{-1}-F_\nu^{-1})^-(v)\,dv$.
	
	We have
	\begin{align*}
	Q^\zeta\left(\{(u,v)\in(0,1)^2\mid u<v\}\right)&=\frac{1}{\gamma}\int_0^1\1_{\{u<\zeta(u)\}}\1_{\{0<\zeta(u)<1\}}\,d\Psi_+(u).
	\end{align*}
	
	According to \eqref{phiPlusGrandQueU}, $u<\varphi(u)$, $d\Psi_+(u)$-almost everywhere on $(0,1)$. According to \eqref{defZetaAvantU0} and \eqref{defU0Epsilon0}, for all $u\in(u_0-\varepsilon,u_0]$, $\zeta(u)\ge\zeta(u_0)$ and
	\begin{equation}\label{zetaDeU0}
	\zeta(u_0)=\varphi(u_0-\varepsilon)>u_0.
	\end{equation}
	
	So for all $u\in(u_0-\varepsilon,u_0]$, $\zeta(u)>u_0\ge u$. Moreover, by Lemma \ref{FmoinsUnrondF}, $\Psi_+^{-1}(\Psi_+(u))=u$, $d\Psi_+(u)$-almost everywhere on $(0,1)$. So $\zeta$ coincides with $\varphi$, $d\Psi_+$-almost everywhere on $(u_0-\varepsilon,u_0]^\complement$, hence $u<\zeta(u)$, $d\Psi_+(u)$-almost everywhere on $(0,1)$. So using \eqref{demoQITMCinversionLocale2} for $h=1$, we get that
	\begin{align*}
	Q^\zeta\left(\{(u,v)\in(0,1)^2\mid u<v\}\right)&=\frac{1}{\gamma}\int_0^1\1_{\{0<\zeta(u)<1\}}\,d\Psi_+(u)=Q^\zeta((0,1)^2),
	\end{align*}
	which is equal to $1$ since $Q^\zeta$ is a probability measure on $(0,1)^2$.
\end{proof}

\begin{cor2}\label{corInfiniteCouplagesMartingalesQ} Let $\mu,\nu\in\mathcal P_1(\R)$ be such that $\mu<_{cx}\nu$. Let $u_0\in(0,1)$ and $\varepsilon\in(0,u_0)$ be such that \eqref{defU0Epsilon0} is satisfied and $\Psi_-^{-1}$ is continuous in $\Psi_+(u_0-\varepsilon)$. If $F_\nu^{-1}$ is nonconstant on $(\varphi(u_0-\varepsilon),\varphi(u_0)]$ and if $F_\mu^{-1}$ is such that for all $\varepsilon'\in(0,\varepsilon)$, the set $\{u\in(0,1)\mid F_\mu^{-1}(u_0-\varepsilon')<F_\mu^{-1}(u)<F_\mu^{-1}(u_0)\}$ has positive Lebesgue measure, then there exist uncountably many martingale couplings parametrised by $\mathcal Q$ between $\mu$ and $\nu$.
\end{cor2}

Notice that by left-continuity, the condition on $F_\mu^{-1}$ in the statement of Corollary \ref{corInfiniteCouplagesMartingalesQ} is satisfied if for all $\varepsilon'\in(0,\varepsilon)$, $F_\mu^{-1}$ takes at least three different values on $[u_0-\varepsilon',u_0]$. A direct consequence of Corollary \ref{corInfiniteCouplagesMartingalesQ} is the infinite amount of martingale couplings between $\mu$ and $\nu$ when $F_\mu^{-1}$ and $F_\nu^{-1}$ are increasing, or equivalently when $F_\mu$ and $F_\nu$ are continuous.

\begin{cor2}\label{corInfiniteCouplagesMartingalesQ2} Let $\mu,\nu\in\mathcal P_1(\R)$ be such that $\mu<_{cx}\nu$ and $\mu(\{x\})=\nu(\{x\})=0$ for all $x\in\R$. Then there exist uncountably many martingale couplings parametrised by $\mathcal Q$ between $\mu$ and $\nu$.
\end{cor2}

\begin{proof}[Proof of Corollary \ref{corInfiniteCouplagesMartingalesQ}] Let $\zeta$ be defined by \eqref{defZeta} and let $Q^\zeta$ be the probability measure defined by \eqref{QITMCinversionLocale}. By Proposition \ref{ITMCcasParticulier}, Proposition \ref{QZeta} and Lemma \ref{Petitmdupp}, for $du$-almost all $u\in(u_0-\varepsilon,u_0)$, $F_\nu^{-1}(\zeta(u))>F_\mu^{-1}(u)>F_\nu^{-1}(u)$ and $F_\nu^{-1}(\varphi(u))>F_\mu^{-1}(u)>F_\nu^{-1}(u)$.
	
	Since $F_\nu^{-1}$ is left-continuous and nonconstant on $(\varphi(u_0-\varepsilon),\varphi(u_0)]$, $F_\nu^{-1}$ is nonconstant on $(\varphi(u_0-\varepsilon),\varphi(u_0))$. So there exist $a,b\in(\varphi(u_0-\varepsilon),\varphi(u_0))$ such that $F_\nu^{-1}(a)<F_\nu^{-1}(b)$. Let then $c=\inf\{u\in(a,b)\mid F_\nu^{-1}(u)=F_\nu^{-1}(b)\}$. Let $u\in[a,b]$. If $F_\nu^{-1}(u)=F_\nu^{-1}(b)$, then $c\le u$. Else if $F_\nu^{-1}(u)<F_\nu^{-1}(b)$, then $c\ge u$. We deduce that $a\le c\le b$ and for all $u,v\in(\varphi(u_0-\varepsilon),\varphi(u_0))$ such that $u<c<v$, we have $F_\nu^{-1}(u)<F_\nu^{-1}(b)\le F_\nu^{-1}(v)$.
		
	Using \eqref{zetaDeU0}, we have $F_\nu^{-1}(\varphi(u_0))>F_\nu^{-1}(\varphi(u_0-\varepsilon))=F_\nu^{-1}(\zeta(u_0))$. The map $\varphi$ is left-continuous, and since $\varepsilon$ is such that $\Psi_-^{-1}$ is continuous in $\Psi_+(u_0-\varepsilon)$, $\zeta$ is left-continuous in $u_0$. So there exists $\tau\in(0,\varepsilon)$ such that for all $u\in[u_0-\tau,u_0]$, $\zeta(u)<c<\varphi(u)$. We deduce that for all $u\in[u_0-\tau,u_0]$, $F_\nu^{-1}(\varphi(u))>F_\nu^{-1}(\zeta(u))$. So for $du$-almost all $u\in[u_0-\tau,u_0]$, we have
	\begin{equation}\label{InfiniteCouplagesComparaison}
	F_\nu^{-1}(\varphi(u))>F_\nu^{-1}(\zeta(u))>F_\mu^{-1}(u)>F_\nu^{-1}(u).
	\end{equation}
	
	Let $a,b,c,d\in\R$ be such that $a>b>c>d$. Then
	\begin{align*}
	\left(\frac{c-d}{a-d}\right)a^2+\left(\frac{a-c}{a-d}\right)d^2&=\frac{ca^2-da^2+ad^2-cd^2}{a-d}=\frac{(a-d)(ac-ad+dc)}{(a-d)}=a(c-d)+dc\\
	&>b(c-d)+dc\\
	&=\left(\frac{c-d}{b-d}\right)b^2+\left(\frac{b-c}{b-d}\right)d^2.
	\end{align*}
	
	Thanks to \eqref{InfiniteCouplagesComparaison} and this inequality applied with $(a,b,c,d)=(F_\nu^{-1}(\varphi(u)),F_\nu^{-1}(\zeta(u)),F_\mu^{-1}(u),F_\nu^{-1}(u))$, we deduce that for $du$-almost all $u\in[u_0-\tau,u_0]$,
	\begin{equation}\label{comparaisonMITMQzeta}
	\int_{\R}y^2\,\widetilde m^{IT}(u,dy)>\int_{\R}y^2\,\widetilde m^{Q^\zeta}(u,dy).
	\end{equation}
	
	By Lemma \ref{lienPetitmGrandM}, we have
	\[
	\int_{\R^2}\1_{\{F_\mu^{-1}(u_0-\tau)<x<F_\mu^{-1}(u_0)\}}y^2\,M^{IT}(dx,dy)=\int_{u=0}^1\1_{\{F_\mu^{-1}(u_0-\tau)<F_\mu^{-1}(u)<F_\mu^{-1}(u_0)\}}\int_{y\in\R}y^2\,\widetilde m^{IT}(u,dy)\,du.
	\]
	
	For all $u\in(0,1)$ such that $F_\mu^{-1}(u_0-\tau)<F_\mu^{-1}(u)<F_\mu^{-1}(u_0)$, we have $u\in[u_0-\tau,u_0]$. So by \eqref{InfiniteCouplagesComparaison}, for $Q\in\{Q^{IT},Q^\zeta\}$ and for $du$-almost all $u\in(0,1)$ such that $F_\mu^{-1}(u_0-\tau)<F_\mu^{-1}(u)<F_\mu^{-1}(u_0)$, $y^2\le\max(F_\nu^{-1}(\varphi(u_0))^2,F_\nu^{-1}(u_0-\varepsilon)^2)$, $\widetilde m^Q(u,dy)$-almost everywhere. Therefore, for $Q\in\{Q^{IT},Q^\zeta\}$, we have
	\[
	\int_{u=0}^1\1_{\{F_\mu^{-1}(u_0-\tau)<F_\mu^{-1}(u)<F_\mu^{-1}(u_0)\}}\int_{y\in\R}y^2\,\widetilde m^Q(u,dy)\,du\le\max(F_\nu^{-1}(\varphi(u_0))^2,F_\nu^{-1}(u_0-\varepsilon)^2)<\infty.
	\]
	
	Since by assumption the Lebesgue measure of $\{u\in(0,1)\mid F_\mu^{-1}(u_0-\tau)<F_\mu^{-1}(u)<F_\mu^{-1}(u_0)\}$ is positive, according to \eqref{comparaisonMITMQzeta}, we get that	
	\begin{align*}
	\int_{\R^2}\1_{\{F_\mu^{-1}(u_0-\tau)<x<F_\mu^{-1}(u_0)\}}y^2\,M^{IT}(dx,dy)&>\int_{u=0}^1\1_{\{F_\mu^{-1}(u_0-\tau)<F_\mu^{-1}(u)<F_\mu^{-1}(u_0)\}}\int_{y\in\R}y^2\,\widetilde m^{Q^{\zeta}}(u,dy)\,du\\
	&=\int_{\R^2}\1_{\{F_\mu^{-1}(u_0-\tau)<x<F_\mu^{-1}(u_0)\}}y^2\,M^{Q^{\zeta}}(dx,dy).
	\end{align*} 
		
		So $M^{IT}\neq M^{Q^\zeta}$. By Proposition \ref{couplagesQconvexes}, we deduce that $(M^{\lambda Q^{IT}+(1-\lambda)Q^\zeta})_{\lambda\in[0,1]}$ is a family of distinct martingale couplings between $\mu$ and $\nu$.
\end{proof}

\section{Corresponding super and submartingale couplings}
\label{sec:SubSuperMartingaleCase}

 We recall that two probability measures $\mu,\nu\in\mathcal P_1(\R)$ are in the decreasing (resp. increasing) convex order and denote $\mu\le_{dcx}\nu$ (resp. $\mu\le_{icx}\nu$) if $\int_{\R}f(x)\,\mu(dx)\le\int_{\R}f(x)\,\nu(dx)$ for any decreasing (resp. increasing) convex function $f:\R\to\R$. Let us begin with $\mu,\nu\in\mathcal P_1(\R)$ such that $\mu\le_{dcx}\nu$. We use the definitions of $\Psi_+$, $\Psi_-$, $\varphi$ and $\widetilde\varphi$ given at the beginning of Section \ref{sousSec:DefITMC}. According to Theorem 4.A.3 Chapter 4 \cite{ShakedShanthikumar}, $\mu,\nu\in\mathcal P_1(\R)$ are such that $\mu\le_{dcx}\nu$ iff for all $u\in[0,1]$, $\int_0^uF_\mu^{-1}(v)\,dv\ge\int_0^uF_\nu^{-1}(v)\,dv$. This implies that for all $u\in[0,1]$, $\Psi_+(u)\ge\Psi_-(u)$. Let then $u_d=\widetilde\varphi(1)=\Psi_+^{-1}(\Psi_-(1))$. Since $\Psi_+$ is continuous, $\Psi_+(u_d)=\Psi_-(1)$. If $u_d=0$, then $\Psi_-(1)=0$, which implies that for all $u\in(0,1)$, $F_\mu^{-1}(u)\ge F_\nu^{-1}(u)$. So, for $U$ a random variable uniformly distributed on $(0,1)$, by the inverse transform sampling, $(F_\mu^{-1}(U),F_\nu^{-1}(U))$ is a supermartingale coupling between $\mu$ and $\nu$, that is $\E[F_\nu^{-1}(U)\vert F_\mu^{-1}(U)]\le F_\mu^{-1}(U)$ almost surely. We suppose now $u_d>0$, which is equivalent to $\Psi_-(1)>0$. Let $\gamma_d=\int_0^{u_d}(F_\mu^{-1}-F_\nu^{-1})^+(u)\,du=\int_0^1(F_\mu^{-1}-F_\nu^{-1})^-(u)\,du\in(0,+\infty)$. We note $\mathcal Q^d$ the set of probability measures $Q^d$ on $(0,u_d)\times(0,1)$ such that
\begin{enumerate}[(i)]
	\item $Q^d$ has first marginal $\frac1{\gamma_d}(F_\mu^{-1}-F_\nu^{-1})^+(u)\,du$;
	\item $Q^d$ has second marginal $\frac1{\gamma_d}(F_\mu^{-1}-F_\nu^{-1})^-(v)\,dv$;
	\item $Q^d\left(\{(u,v)\in\mathcal (0,u_d)\times(0,1)\mid u<v\}\right)=1$.
\end{enumerate}

The existence of the inverse transform supermartingale coupling introduced below implies that $Q^d$ is non-empty. Let $Q^d$ be an element of $\mathcal Q^d$. Let $\pi^{Q^d}_+$ and $\pi^{Q^d}_-$ be two sub-Markov kernels such that
\[
Q^d(du,dv)=\frac{1}{\gamma_d}(F_\mu^{-1}(u)-F_\nu^{-1})^+(u)\,du\,\pi^{Q^d}_+(u,dv)=\frac{1}{\gamma_d}(F_\mu^{-1}-F_\nu^{-1})^-(v)\,dv\,\pi^{Q^d}_-(v,du).
\]

We recall the definition of $\mathcal U_+$, $\mathcal U_-$ and $\mathcal U_0$ given by \eqref{defUmoinsUplusUzero}. Let $(\widetilde m^{Q^d}(u,dy))_{u\in(0,1)}$ be the Markov kernel defined by

\begin{equation}
\left\{\begin{array}{r}
\displaystyle\int_{(0,1)}\frac{F_\mu^{-1}(u)-F_\nu^{-1}(u)}{F_\nu^{-1}(v)-F_\nu^{-1}(u)}\delta_{F_\nu^{-1}(v)}(dy)\pi^d_+(u,dv)+\int_{(0,1)}\frac{F_\nu^{-1}(v)-F_\mu^{-1}(u)}{F_\nu^{-1}(v)-F_\nu^{-1}(u)}\pi^d_+(u,dv)\delta_{F_\nu^{-1}(u)}(dy)\jump\\
\mathrm{for}\ u\in\mathcal U_+\cap(0,u_d)\ \textrm{such that}\ \pi^d_+(u,\{v\in(0,1)\mid F_\nu^{-1}(v)>F_\mu^{-1}(u)\})=1; \\
\\
\displaystyle\int_{(0,u_d)}\frac{F_\mu^{-1}(u)-F_\nu^{-1}(u)}{F_\nu^{-1}(v)-F_\nu^{-1}(u)}\delta_{F_\nu^{-1}(v)}(dy)\pi^d_-(u,dv)+\int_{(0,u_d)}\frac{F_\nu^{-1}(v)-F_\mu^{-1}(u)}{F_\nu^{-1}(v)-F_\nu^{-1}(u)}\pi^d_-(u,dv)\delta_{F_\nu^{-1}(u)}(dy)\jump\\
\mathrm{for}\ u\in\mathcal U_-\ \textrm{such that}\ \pi^d_-(u,\{v\in(0,1)\mid F_\nu^{-1}(v)<F_\mu^{-1}(u)\})=1;\\
\\
\delta_{F_\nu^{-1}(u)}(dy)\quad\quad\quad\quad \mathrm{otherwise.}\\
\end{array}
\right.
\label{defPetitmdcx}
\end{equation}

The idea of this construction is as follows: for $u\in\mathcal U_-$, we can associate to $F_\mu^{-1}(u)$ a martingale contribution with $F_\nu^{-1}(u)$ and $F_\nu^{-1}(v)$ as in Section \ref{MesuresQ}. If $F_\mu^{-1}(u)=F_\nu^{-1}(u)$, we associate $F_\nu^{-1}(u)$ to $F_\mu^{-1}(u)$. However, if $u\in\mathcal U_+$, the martingale contribution stops at $u_d$. For $u\in\mathcal U_+$ such that $u>u_d$, we only associate $F_\nu^{-1}(u)<F_\mu^{-1}(u)$ to $F_\mu^{-1}(u)$ since there is no partner $v\in\mathcal U_-\cap(u,1)$ available to construct a martingale contribution: all such possible partners have already been associated to values in $\mathcal U_+\cap(0,u_d)$. Let $(m^{Q^d}(x,dy))_{x\in\R}$ be the Markov kernel defined as in \eqref{defGrandM} with $(\widetilde m(u,dy))_{u\in(0,1)}$ replaced by $(\widetilde m^{Q^d}(u,dy))_{u\in(0,1)}$. Then $\mu(dx)\,m^{Q^d}(x,dy)$ is expected to be a supermartingale coupling between $\mu$ and $\nu$, as the next proposition states.

\begin{prop2}\label{couplageSupermartingale} Let $\mu,\nu\in\mathcal P_1(\R)$ be such that $\mu\le_{dcx}\nu$. Suppose that $u_d>0$. The probability measure $M^{Q^d}(dx,dy)=\mu(dx)\,m^{Q^d}(x,dy)$ is a supermartingale coupling between $\mu$ and $\nu$.
\end{prop2}

\begin{proof}[Proof] With the very same arguments as in Section \ref{MesuresQ}, we show that $M^{Q^d}(dx,dy)$ is a coupling between $\mu$ and $\nu$ (see Proposition \ref{muMcouplageMartingale}).
	
	For $du$-almost all $u\in\mathcal U_+\cap(0,u_d)$ and for $du$-almost all $u\in\mathcal U_-$, the same calculation as \eqref{calculMartingaleUmoins} yields $\int_\R y\,\widetilde m^{Q^d}(u,dy)=F_\mu^{-1}(u)$. For $u\in\mathcal U_0$, by definition of $\widetilde m^{Q^d}$, $\int_\R y\,\widetilde m^{Q^d}(u,dy)=F_\nu^{-1}(u)=F_\mu^{-1}(u)$. For $u\in\mathcal U_+\cap(u_d,1)$, by definition of $\widetilde m^{Q^d}$, $\int_\R y\,\widetilde m^{Q^d}(u,dy)=F_\nu^{-1}(u)<F_\mu^{-1}(u)$.
	
	In conclusion, for $du$-almost all $u\in(0,1)$, $\int_\R y\,\widetilde m^{Q^d}(u,dy)\le F_\mu^{-1}(u)$. Let $h:\R\to\R$ be a measurable nonnegative and bounded function. By Lemma \ref{lienPetitmGrandM},
	\begin{equation}\label{preuveMQDestCouplageSupermartingale}
	\int_{\R\times\R}h(x)(y-x)\,\mu(dx)\,m^{Q^d}(x,dy)=\int_0^1h(F_\mu^{-1}(u))\left(\int_\R(y-F_\mu^{-1}(u))\,\widetilde m^{Q^d}(u,dy)\right)\,du\le0.
	\end{equation}
	
	Therefore, for all $Q^d\in\mathcal Q^d$, $M^{Q^d}(dx,dy)$ is a supermartingale coupling between $\mu$ and $\nu$.
\end{proof}

Notice that $M^{Q^d}$ is a martingale coupling between $\mu$ and $\nu$ iff $\mathcal U_+\cap(u_d,1)=\emptyset$. Indeed, $M^{Q^d}$ is a martingale coupling iff \eqref{preuveMQDestCouplageSupermartingale} is an equality for any nonnegative and bounded function $h:\R\to\R$, which in view of the proof of Proposition \ref{couplageSupermartingale} and the left-continuity of $F_\mu^{-1}$ and $F_\nu^{-1}$ is equivalent to $\mathcal U_+\cap(u_d,1)=\emptyset$. We can also see that $M^{Q^d}$ is a martingale coupling between $\mu$ and $\nu$ iff $\mu$ and $\nu$ have equal means, that is $\Psi_+(1)=\Psi_-(1)$. By continuity of $\Psi_+$, we have $\Psi_+(u_d)=\Psi_-(1)$. So $\Psi_+(1)=\Psi_-(1)$ iff $\Psi_+(u_d)=\Psi_+(1)$, which by left-continuity of $F_\mu^{-1}$ and $F_\nu^{-1}$ is equivalent to $\mathcal U_+\cap(u_d,1)=\emptyset$.


Let $Q^{ITS}$ be the measure defined on $(0,u_d)\times(0,1)$ by
\begin{equation}
Q^{ITS}(du,dv)=\frac{1}{\gamma_d}(F_\mu^{-1}-F_\nu^{-1})^+(u)\,du\,\pi^{ITS}_+(u,dv)\quad\textrm{where $\pi^{ITS}_+(u,dv)=\1_{\{0<\varphi(u)<1\}}\,\delta_{\varphi(u)}(dv)$}.
\end{equation}

\begin{prop2}\label{ITMCsupermartingale} Let $\mu,\nu\in\mathcal P_1(\R)$ such that $\mu\le_{dcx}\nu$. Suppose that $u_d>0$. The measure $Q^{ITS}$ is an element of $\mathcal Q^d$. Moreover, 
	\[
	Q^{ITS}(du,dv)=\frac{1}{\gamma_d}(F_\mu^{-1}-F_\nu^{-1})^-(v)\,dv\,\pi^{ITS}_-(v,du)\quad\textrm{where $\pi^{ITS}_-(v,du)=\1_{\{0<\widetildelow{\varphi}(v)<u_d\}}\,\delta_{\widetildelow\varphi(v)}(du)$}.
	\]
\end{prop2}
 
	  For this particular $Q^{ITS}\in\mathcal Q^d$, Definition \eqref{defPetitmdcx} of the Markov kernel $(\widetilde m^{Q^{ITS}}(u,dy))_{u\in(0,1)}$ becomes
\begin{equation}
\left\{\begin{array}{r}
\displaystyle\left(1-\frac{F_\mu^{-1}(u)-F_\nu^{-1}(u)}{F_\nu^{-1}(\varphi(u))-F_\nu^{-1}(u)}\right)\delta_{F_\nu^{-1}(u)}(dy)+\frac{F_\mu^{-1}(u)-F_\nu^{-1}(u)}{F_\nu^{-1}(\varphi(u))-F_\nu^{-1}(u)}\delta_{F_\nu^{-1}(\varphi(u))}(dy)\jump\\
\mathrm{if}\ u<u_d,\ F_\nu^{-1}(\varphi(u))>F_\mu^{-1}(u)>F_\nu^{-1}(u)\text{ and }\varphi(u)<1;\\
\\
\displaystyle\left(1-\frac{F_\nu^{-1}(u)-F_\mu^{-1}(u)}{F_\nu^{-1}(u)-F_\nu^{-1}(\widetilde\varphi(u))}\right)\delta_{F_\nu^{-1}(u)}(dy)+\frac{F_\nu^{-1}(u)-F_\mu^{-1}(u)}{F_\nu^{-1}(u)-F_\nu^{-1}(\widetilde\varphi(u))}\delta_{F_\nu^{-1}(\widetildelow\varphi(u))}(dy)\jump\\
\mathrm{if}\ F_\nu^{-1}(\widetilde\varphi(u))<F_\mu^{-1}(u)<F_\nu^{-1}(u)\text{ and }\widetilde\varphi(u)<u_d;\\
\\
\delta_{F_\nu^{-1}(u)}(dy)\quad\quad\quad\quad \mathrm{otherwise.}\\
\end{array}
\right.
\label{defPetitmITMCdcx}
\end{equation}

Then $M^{ITS}(dx,dy)=\mu(dx)\,m^{Q^{ITS}}(x,dy)$ is a supermartingale coupling, called the inverse transform supermartingale coupling.

\begin{proof}[Proof of Proposition \ref{ITMCsupermartingale}] A mild adaptation of the proof of Proposition \ref{ITMCcasParticulier} is conclusive. In particular, Lemma \ref{chgtVariablesf+f-} gives the key property to show that $Q^{ITS}\in\mathcal Q^d$:
	\[
	\int_0^{u_d}h(u,\varphi(u))\,d\Psi_+(u)=\int_0^1h(\widetilde\varphi(v),v)\,d\Psi_-(v),
	\]
	for any measurable and bounded function $h:[0,1]^2\to\R$.
\end{proof}

If $\mu,\nu\in\mathcal P_1(\R)$ are such that $\mu\le_{icx}\nu$, denoting by $\overline\mu$ and $\overline\nu$ the respective images of $\mu$ and $\nu$ by $x\mapsto -x$, one can easily see that $\overline\mu\le_{dcx}\overline\nu$. So if $M^{Q^d}(dx,dy)$ denotes a supermartingale coupling between $\overline\mu$ and $\overline\nu$, then the image of $M^{Q^d}(dx,dy)$ by $(x,y)\mapsto(-x,-y)$ is a submartingale coupling between $\mu$ and $\nu$. In particular, the image of the inverse transform supermartingale coupling between $\overline\mu$ and $\overline\nu$ by $(x,y)\mapsto(-x,-y)$ is a submartingale coupling between $\mu$ and $\nu$.
	

\section{Appendix}
\label{Appendix}

We begin with a key result for the construction of the inverse transform martingale coupling.

\begin{lemma}\label{chgtVariablesf+f-}
	Let $f_1,f_2:(0,1)\to\R$ be two measurable nonnegative and integrable functions and $u_0\in[0,1]$ be such that $\int_0^{u_0}f_1(u)\,du=\int_0^1f_2(u)\,du$. Let $\Psi_1:[0,1]\ni u\mapsto \int_0^uf_1(v)\,dv$, $\Psi_2:[0,1]\ni u\mapsto \int_0^uf_2(v)\,dv$ and $\Gamma=\Psi_2^{-1}\circ\Psi_1$ where $\Psi_{2}^{-1}$ denotes the c\`ag pseudo-inverse of $\Psi_2$. Then $\Gamma$ is well defined on $[0,u_0]$ and for any measurable and bounded function $h:[0,1]\to\R$,
	\[
	\int_0^{u_0}h(\Gamma(u))f_1(u)\,du=\int_0^1h(v)f_2(v)\,dv.
	\]
\end{lemma}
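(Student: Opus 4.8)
The plan is to reduce the change-of-variables formula to a statement about pushforward measures, following the same pattern as the computations in Section \ref{sec:ITMC}. First I would record the elementary facts: $\Psi_1$ and $\Psi_2$ are nondecreasing and continuous on $[0,1]$ (being integrals of nonnegative integrable functions), $\Psi_1(u_0)=\Psi_2(1)$ by hypothesis, and $\Psi_1(u)\le\Psi_1(u_0)=\Psi_2(1)$ for $u\in[0,u_0]$, so that $\Psi_2^{-1}(\Psi_1(u))\in[0,1]$ is well defined for every $u\in[0,u_0]$; hence $\Gamma$ is well defined on $[0,u_0]$. The measure $f_1(u)\,\1_{(0,u_0)}(u)\,du$ on $(0,1)$ has total mass $\Psi_1(u_0)=\Psi_2(1)$, and its image under $\Psi_1$ is the restriction of Lebesgue measure to $(0,\Psi_2(1))$, since $\Psi_1$ is nondecreasing and continuous with $\Psi_1(0)=0$, $\Psi_1(u_0)=\Psi_2(1)$, and $d\Psi_1(u)=f_1(u)\,du$ on $(0,u_0)$.

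Next I would invoke Proposition \ref{prop410RevuzYor} (the standard fact on left-continuous pseudo-inverses, available in the appendix) to write, for any measurable bounded $h$,
\[
\int_0^{u_0}h(\Gamma(u))f_1(u)\,du=\int_0^{u_0}h(\Psi_2^{-1}(\Psi_1(u)))\,d\Psi_1(u)=\int_0^{\Psi_2(1)}h(\Psi_2^{-1}(w))\,dw,
\]
using the change of variables $w=\Psi_1(u)$ just described. It then remains to identify $\int_0^{\Psi_2(1)}h(\Psi_2^{-1}(w))\,dw$ with $\int_0^1 h(v)f_2(v)\,dv$. For this I would again use that $\Psi_2$ is continuous and nondecreasing with $\Psi_2(0)=0$: by Lemma \ref{FmoinsUnrondF} one has $\Psi_2^{-1}(\Psi_2(v))=v$ for $d\Psi_2(v)$-almost all $v\in(0,1)$, so the change of variables $w=\Psi_2(v)$ (legitimate by Proposition \ref{prop410RevuzYor} applied to the nondecreasing function $\Psi_2$) gives
\[
\int_0^{\Psi_2(1)}h(\Psi_2^{-1}(w))\,dw=\int_0^1 h(\Psi_2^{-1}(\Psi_2(v)))\,d\Psi_2(v)=\int_0^1 h(v)f_2(v)\,dv,
\]
which is the desired identity.

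The only genuinely delicate point is the bookkeeping at the endpoints and on the sets where $\Psi_1$ or $\Psi_2$ is constant: one must check that the flat parts of $\Psi_1$ (respectively $\Psi_2$) are $f_1(u)\,du$-null (respectively $d\Psi_2$-null), so that the pseudo-inverse relations hold almost everywhere with respect to the relevant measures and the two applications of Proposition \ref{prop410RevuzYor} are valid. This is exactly the same mechanism already used repeatedly in the proof of Proposition \ref{ITMCcasParticulier}, so I expect it to go through with no new ideas; the main obstacle is merely to state the pseudo-inverse change-of-variables cleanly enough that continuity of $\Psi_2$ (and hence surjectivity of $\Psi_2$ onto $[0,\Psi_2(1)]$) is visibly what makes the first integral equal $\int_0^{\Psi_2(1)}h(\Psi_2^{-1}(w))\,dw$ rather than something with extra atoms.
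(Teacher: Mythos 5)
Your proposal is correct and follows essentially the same route as the paper: two applications of Proposition \ref{prop410RevuzYor} (substitution by the continuous nondecreasing functions $\Psi_1$ and $\Psi_2$), the identity $\Psi_1(u_0)=\Psi_2(1)$, and Lemma \ref{FmoinsUnrondF} to get $\Psi_2^{-1}(\Psi_2(v))=v$, $d\Psi_2(v)$-almost everywhere. The concern you raise about flat parts is precisely what that lemma disposes of, so no further bookkeeping is needed.
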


The proof of Lemma \ref{chgtVariablesf+f-} relies on the next proposition, which is a well known result of integration by continuous and nondecreasing substitution, whose proof can be found for instance in Proposition $4.10$ Chapter $0$ \cite{Revuz-Yor}.

\begin{prop2}\label{prop410RevuzYor} Let $a,b\in\R$ be such that $a<b$. Let $\Psi:[a,b]\to\R$ be a continuous and nondecreasing function. Then for any Borel function $f:[\Psi(a),\Psi(b)]\to\R$,
	\[
	\int_a^bf(\Psi(s))\,d\Psi(s)=\int_{\Psi(a)}^{\Psi(b)}f(t)\,dt.
	\]
\end{prop2}

\begin{proof}[Proof of Lemma \ref{chgtVariablesf+f-}] Let $h:[0,1]\to\R$ be a measurable and bounded function. Since $\Psi_1$ is nondecreasing and continuous, using Proposition \ref{prop410RevuzYor}, we have
	\[
	\int_0^{u_0}h(\Gamma(u))f_1(u)\,du=\int_0^{u_0}h(\Psi_2^{-1}(\Psi_1(u)))\,d\Psi_1(u)=\int_0^{\Psi_1(u_0)}h(\Psi_2^{-1}(w))\,dw.
	\]
	
	Since $\int_0^{u_0}f_1(u)\,du=\int_0^1f_2(u)\,du$, we have $\Psi_1(u_0)=\Psi_2(1)$, and since $\Psi_2$ is nondecreasing and continuous, using once again Proposition \ref{prop410RevuzYor}, we have
	\[
	\int_0^{\Psi_1(u_0)}h(\Psi_2^{-1}(w))\,dw=\int_0^{\Psi_2(1)}h(\Psi_2^{-1}(w))\,dw=\int_0^1h(\Psi_2^{-1}(\Psi_2(v)))\,d\Psi_2(v).
	\]
	
	Since by Lemma \ref{FmoinsUnrondF}, $\Psi_2^{-1}(\Psi_2(v))=v$, $d\Psi_2(v)$-almost everywhere on $(0,1)$, we conclude that
	\[
	\int_0^1h(\Psi_2^{-1}(\Psi_2(v)))\,d\Psi_2(v)=\int_0^1h(v)f_2(v)\,dv.
	\]
\end{proof}

We complete this section with standard lemmas with their proofs, so that the present article is self-contained.

\begin{lemma}\label{FmoinsUnrondF} Let $I\subset\R$ be an interval, $F:I\to\R$ be a bounded and nondecreasing c\`adl\`ag function, $F(I)$ be the image of $I$ by $F$ and $F^{-1}$ be the left continuous pseudo-inverse of $F$, that is
	\[
	F^{-1}:u\in F(I)\mapsto\inf\{r\in I\mid F(r)\ge u\}
	\]
	
	Then for all $(x,u)\in I\times F(I)$, $F(x)\ge u\iff x\ge F^{-1}(u)$. Moreover, $F^{-1}(F(x))=x$, $dF(x)$-almost everywhere on $I$.
\end{lemma}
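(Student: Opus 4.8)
The plan is to prove the equivalence first and then read off the almost-everywhere identity from it.

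For the equivalence, fix $u\in F(I)$ and set $A_u=\{r\in I\mid F(r)\ge u\}$; since $u$ is attained on $I$, $A_u$ is nonempty and $F^{-1}(u)=\inf A_u$. If $F(x)\ge u$ then $x\in A_u$, whence $x\ge F^{-1}(u)$: this is the trivial implication. For the converse I would first show $F(F^{-1}(u))\ge u$ by taking $r_n\in A_u$ with $r_n\downarrow F^{-1}(u)$ and using the right-continuity of $F$, so that $F(F^{-1}(u))=\lim_n F(r_n)\ge u$; then, for $x\ge F^{-1}(u)$, monotonicity of $F$ gives $F(x)\ge F(F^{-1}(u))\ge u$. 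One should check that $F^{-1}(u)\in I$, so that $F(F^{-1}(u))$ is meaningful: writing $u=F(x_0)$ with $x_0\in I$ gives $F^{-1}(u)\le x_0$, and the only remaining boundary case, in which $F^{-1}(u)\notin I$, forces $F$ to be constant equal to $u$ near $\inf I$ and is handled directly since then both sides of the claimed equivalence are true for every $x\in I$.

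Applying the equivalence with $u=F(x)$ gives $F^{-1}(F(x))\le x$ for all $x\in I$, and combining $F(F^{-1}(F(x)))\ge F(x)$ (from the equivalence) with $F(F^{-1}(F(x)))\le F(x)$ (monotonicity, since $F^{-1}(F(x))\le x$) yields $F(F^{-1}(F(x)))=F(x)$. Hence, for each $c\in F(I)$, the level set $L_c:=\{r\in I\mid F(r)=c\}$ is an interval whose left endpoint equals $F^{-1}(c)$ and is contained in $L_c$, and the exceptional set becomes $D:=\{x\in I\mid F^{-1}(F(x))<x\}=\bigcup_{c\in F(I)}\big(L_c\setminus\{\inf L_c\}\big)$. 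Only countably many $L_c$ are nondegenerate, being pairwise disjoint intervals of positive length, whereas $L_c\setminus\{\inf L_c\}=\emptyset$ for a degenerate $L_c$; thus $D$ is a countable union. For each nondegenerate $L_c$, the set $L_c\setminus\{\inf L_c\}$ is a subinterval of the form $(\inf L_c,\sup L_c]$ or $(\inf L_c,\sup L_c)$ on which $F$ is constant equal to $c$, so its $dF$-measure vanishes; in particular, if $F$ has a jump at $\sup L_c$ then $\sup L_c\notin L_c$, so this set never contains an atom of $dF$. Therefore $dF(D)=0$, which is precisely the statement that $F^{-1}(F(x))=x$ for $dF(x)$-almost every $x\in I$.

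The monotonicity manipulations in the first part are routine; I expect the main obstacle to be the structure of $D$ in the second part. The slogan ``$F$ is flat on $D$, hence $dF(D)=0$'' is morally correct but needs three careful points: the flatness at a point of $D$ is only one-sided; the a priori uncountable union over levels $c$ must be seen to reduce to a countable one because degenerate level sets contribute nothing; and the $dF$-null computation succeeds only because the left endpoints of the flat intervals are removed and no atom of $dF$ (hence no jump point) is ever included. With these in hand the conclusion is immediate.
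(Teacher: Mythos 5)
Your proof is correct. The first half (the equivalence $F(x)\ge u\iff x\ge F^{-1}(u)$ via the infimum definition, right-continuity along a decreasing sequence in $A_u$, and monotonicity) coincides with the paper's argument, and you add a boundary check ($F^{-1}(u)\notin I$) that the paper silently skips. For the second half you diverge genuinely: the paper normalises $F$ to $G=(F-a)/(b-a)$, invokes the ``well known'' identity $G^{-1}(G(G^{-1}(u)))=G^{-1}(u)$ together with the inverse transform sampling to get $G^{-1}(G(x))=x$ for $dG$-a.e.\ $x$, and transfers back via $dG=\frac{1}{b-a}dF$; you instead identify the exceptional set explicitly as $D=\bigcup_c\bigl(L_c\setminus\{\inf L_c\}\bigr)$, observe that only countably many level sets are nondegenerate, and compute $dF\bigl((\inf L_c,\sup L_c]\bigr)=F(\sup L_c)-F(\inf L_c)=0$ directly. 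Your route is self-contained and more informative (it exhibits $D$ as the union of flat stretches with their left endpoints removed), at the cost of being longer; the paper's is shorter but outsources the core fact to the probabilistic folklore it cites. One cosmetic point: your claim that $\inf L_c=F^{-1}(c)\in L_c$ fails in the boundary case where $F^{-1}(c)\notin I$ (e.g.\ $I$ open at its left end with $F\equiv c$ near $\inf I$, possibly $\inf I=-\infty$); there $L_c\setminus\{\inf L_c\}=L_c$, but your description of $D$ and the nullity computation go through unchanged, so this is an imprecision rather than a gap.
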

\begin{proof} Let $(x,u)\in I\times F(I)$. If $F(x)\ge u$, then by definition of the infimum, $x\ge F^{-1}(u)$. Conversely, if $x\ge F^{-1}(u)$, then let $(r_n)_{n\in\N}\in I^\N$ be a decreasing sequence converging to $F^{-1}(u)$. For all $n\in\N$, $F(r_n)\ge u$. By right-continuity of $F$, we get $F(F^{-1}(u))\ge u$ for $n\to+\infty$ . By monotonicity of $F$, we have $F(x)\ge F(F^{-1}(u))\ge u$.
	
	 Let us now prove the second statement. Let $a=\inf F(I)$ and $b=\sup F(I)$. If $a=b$, then $dF(x)$ is the trivial measure on $I$ so the statement is straightforward. Else, let $G:I\to[0,1]$ be defined for all $x\in I$ by $G(x)=(F(x)-a)/(b-a)$ and let $G^{-1}$ be its left-continuous pseudo-inverse. It is well known that for all $u\in(0,1)$, $G^{-1}(G(G^{-1}(u)))=G^{-1}(u)$. So $G^{-1}(G(G^{-1}(U)))=G^{-1}(U)$, where $U$ is a random variable uniformly distributed on $[0,1]$. By the inverse transform sampling, it implies that $G^{-1}(G(x))=x$, $dG(x)$-almost everywhere on $I$. For all $u\in F(I)$, we have $F^{-1}(u)=G^{-1}((u-a)/(b-a))$, hence $F^{-1}(F(x))=G^{-1}(G(x))=x$, $dG(x)$-almost everywhere on $I$. Since $dG(x)=\frac1{b-a}dF(x)$, $dG(x)$ and $dF(x)$ are equivalent, so $F^{-1}(F(x))=x$, $dF(x)$-almost everywhere on $I$.
	
%
\end{proof}

\begin{lemma}\label{FcomprisEntre0et1} Let $\mu\in\mathcal P(\R)$. Then $F_\mu(x)>0$ and $F_\mu(x_-)<1$, $\mu(dx)$-almost everywhere on $\R$.
	
\end{lemma}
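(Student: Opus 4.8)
The plan is to show that the two exceptional sets
\[
A:=\{x\in\R\mid F_\mu(x)=0\}\quad\text{and}\quad B:=\{x\in\R\mid F_\mu(x_-)=1\}
\]
are both $\mu$-null, since the conclusion of the lemma is exactly $\mu(A\cup B)=0$ and $\mu(A\cup B)\le\mu(A)+\mu(B)$. Thus it suffices to treat $A$ and $B$ separately, and by symmetry under $x\mapsto-x$ (which swaps the roles of $F_\mu(x)$ and $1-F_\mu(x_-)$) the two arguments are mirror images of each other.

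For $A$: since $F_\mu$ is nondecreasing and nonnegative, $A$ is a down-set of $\R$, hence one of $\emptyset$, $\R$, $(-\infty,a)$ or $(-\infty,a]$ for some $a\in\R$. The case $A=\R$ is impossible because $F_\mu(x)\to1$ as $x\to+\infty$, and $A=\emptyset$ gives $\mu(A)=0$ trivially. Otherwise $a=\sup A\in\R$, every $x<a$ lies in $A$, and along the increasing union $(-\infty,a)=\bigcup_n(-\infty,a-1/n]$ continuity of $\mu$ from below gives $\mu((-\infty,a))=\lim_n F_\mu(a-1/n)=0$; if in addition $a\in A$ then $\mu(\{a\})\le F_\mu(a)=0$, so in all cases $\mu(A)\le\mu((-\infty,a))+\mu(\{a\})=0$.

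For $B$: I would first rewrite $B=\{x\mid\mu((-\infty,x))=1\}=\{x\mid\mu([x,+\infty))=0\}$, which is an up-set, hence one of $\emptyset$, $\R$, $(b,+\infty)$ or $[b,+\infty)$. The case $B=\R$ is excluded since $\mu([x,+\infty))=1-F_\mu(x_-)\to1$ as $x\to-\infty$. In the remaining cases $b=\inf B\in\R$, every $x>b$ lies in $B$, and $\mu((b,+\infty))=\lim_n\mu([b+1/n,+\infty))=0$, while $b\in B$ forces $\mu(\{b\})\le\mu([b,+\infty))=0$; hence $\mu(B)\le\mu((b,+\infty))+\mu(\{b\})=0$, which finishes the proof.

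There is no genuine obstacle in this lemma; the only point needing a little care is the bookkeeping over whether the endpoint $a$ (resp.\ $b$) belongs to the half-line, which is what forces one to combine the monotone-limit form of continuity of $\mu$ with the right-continuity of $F_\mu$ (resp.\ the left-continuity encoded in $x\mapsto F_\mu(x_-)$), and the degenerate "whole line" alternatives are ruled out purely by $\mu(\R)=1$.
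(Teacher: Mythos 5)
Your proof is correct and follows essentially the same route as the paper's: identify each exceptional set as a half-line (open or closed at its finite endpoint) and compute its $\mu$-measure via $F_\mu(a)$ or $F_\mu(a_-)$, which vanish by construction. The only cosmetic difference is that you spell out the monotone limit $\mu((-\infty,a))=\lim_n F_\mu(a-1/n)$ where the paper simply writes $F_\mu(a_-)$, and you handle the endpoint by splitting it off rather than by case-distinguishing the interval's form; both are the same argument.
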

\begin{proof} If $\{x\in\R\mid F_\mu(x)=0\}$ is nonempty, then it is an interval of the form $(-\infty,a]$ or $(-\infty,a)$, depending on whether $F_\mu(a)=0$ or not. If $F_\mu(a)=0$, then $\mu(\{x\in\R\mid F_\mu(x)=0\})=\mu((-\infty,a])=F_\mu(a)=0$. Else, since for all $x<a$, $F_\mu(x)=0$, then $\mu(\{x\in\R\mid F_\mu(x)=0\})=\mu((-\infty,a))=F_\mu(a_-)=0$.
	
	If $\{x\in\R\mid F_\mu(x_-)=1\}$ is nonempty, then it is an interval of the form $[a,+\infty)$ or $(a,+\infty)$, depending whether $F_\mu(a_-)=1$ or not. If $F_\mu(a_-)=1$, then $\mu(\{x\in\R\mid F_\mu(x_-)=1\})=\mu([a,+\infty))=1-F_\mu(a_-)=0$. Else, since for all $x>a$, $F_\mu(x_-)=1$, then $\mu(\{x\in\R\mid F_\mu(x_-)=1\})=\mu((a,+\infty))=1-F_\mu(a)=1-\lim_{x\to a,x>a}F_\mu(x_-)=0$, by right continuity of $F_\mu$.
\end{proof}

\begin{lemma} Let $\mu\in\mathcal P_1(\R)$. Then $\mu$ is symmetric with mean $\alpha\in\R$, that is $(x-\alpha)_\sharp\mu(dx)=(\alpha-x)_\sharp\mu(dx)$ where $\sharp$ denotes the pushforward operation, iff
	\[
	F_\mu^{-1}(u_+)=2\alpha-F_\mu^{-1}(1-u),
	\]
	for all $u\in(0,1)$. In that case, $F_\mu^{-1}(u)=2\alpha-F_\mu^{-1}(1-u)$ for $u\in(0,1)$ up to the at most countable set of discontinuities of $F_\mu^{-1}$.
	\label{caracterisationLoiSymetriqueFonctionQuantile}
\end{lemma}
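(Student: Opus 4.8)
The plan is to encode the reflection symmetry as an equality of quantile functions. Write $\check\mu$ for the image of $\mu$ under $x\mapsto 2\alpha-x$; since $x\mapsto x-\alpha$ is a bijection of $\R$ and $\alpha-x=(2\alpha-x)-\alpha$, the hypothesis $(x-\alpha)_\sharp\mu(dx)=(\alpha-x)_\sharp\mu(dx)$ is equivalent to $\mu=\check\mu$, and when $\mu\in\mathcal P_1(\R)$ integrating $x\mapsto x-\alpha$ against this identity shows that $\alpha$ is then the mean of $\mu$. So the lemma reduces to proving, for every probability measure $\mu$ on $\R$, the identity
\begin{equation*}
\forall u\in(0,1),\qquad F_{\check\mu}^{-1}(u_+)=2\alpha-F_\mu^{-1}(1-u),
\end{equation*}
together with the fact that a probability measure on $\R$ is determined by its right-continuous quantile function $u\mapsto F_\cdot^{-1}(u_+)$ (because $F_\eta^{-1}(U_+)=F_\eta^{-1}(U)\sim\eta$ for $U$ uniform on $(0,1)$, the set of discontinuities of $F_\eta^{-1}$ being at most countable).

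First I would prove the identity. Take $U$ uniform on $(0,1)$; then $1-U$ is uniform too, so by the inverse transform sampling $F_\mu^{-1}(1-U)\sim\mu$ and hence $g(U)\sim\check\mu$, where $g(u):=2\alpha-F_\mu^{-1}(1-u)$. The map $g$ is nondecreasing because $F_\mu^{-1}$ is, and it is right-continuous: for $u_n\downarrow u_0$ one has $1-u_n\uparrow 1-u_0$, and the left-continuity of $F_\mu^{-1}$ gives $F_\mu^{-1}(1-u_n)\to F_\mu^{-1}(1-u_0)$. Thus $g$ and $F_{\check\mu}^{-1}$ are two nondecreasing functions on $(0,1)$ pushing the Lebesgue measure onto the same measure $\check\mu$, so they coincide outside an at most countable set: if $g(u_0)\neq F_{\check\mu}^{-1}(u_0)$ then $F_{\check\mu}$ is constant, at level $u_0$, on the open interval between these two values, and there are only countably many such flat stretches. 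Finally, for a fixed $u\in(0,1)$ I would take $u_n\downarrow u$ avoiding that exceptional countable set and pass to the limit, using the right-continuity of $g$ and of $u\mapsto F_{\check\mu}^{-1}(u_+)$, to obtain $g(u)=F_{\check\mu}^{-1}(u_+)$.

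With the identity at hand both implications are immediate: if $\mu=\check\mu$ then $F_\mu^{-1}(u_+)=F_{\check\mu}^{-1}(u_+)=2\alpha-F_\mu^{-1}(1-u)$ for all $u\in(0,1)$; conversely this last equality for all $u$ gives $F_{\check\mu}^{-1}(u_+)=F_\mu^{-1}(u_+)$ for all $u$, hence $\check\mu=\mu$. For the last assertion of the lemma, at every continuity point $u$ of the monotone function $F_\mu^{-1}$ we have $F_\mu^{-1}(u_+)=F_\mu^{-1}(u)$, and a monotone function has at most countably many discontinuities, so $F_\mu^{-1}(u)=2\alpha-F_\mu^{-1}(1-u)$ holds off that countable set. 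The hard part will be the monotone transport uniqueness invoked above — that two nondecreasing functions on $(0,1)$ with the same pushforward of the Lebesgue measure agree up to countably many points — and the right-continuity argument that upgrades this to the pointwise identity $g=F_{\check\mu}^{-1}(\cdot_+)$; everything else is routine bookkeeping with the inverse transform sampling and Lemma \ref{FmoinsUnrondF}.
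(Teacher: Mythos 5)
Your proof is correct and follows essentially the same route as the paper's: inverse transform sampling, the a.e.\ uniqueness of the nondecreasing map pushing Lebesgue measure onto a given law, and a right-continuity argument to upgrade the a.e.\ identity to the pointwise one with $u_+$. The only organizational difference is that you establish the identity $F_{\check\mu}^{-1}(u_+)=2\alpha-F_\mu^{-1}(1-u)$ for an arbitrary $\mu$ (with $\check\mu$ the reflected measure), which makes both implications of the equivalence immediate, whereas the paper writes out only the forward implication and justifies the a.e.\ agreement by the explicit supremum inequalities borrowed from Lemma A.3 of \cite{ApproxMOTJourdain2} rather than by your count of flat stretches of $F_{\check\mu}$.
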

\begin{proof} Let $U$ be a random variable uniformly distributed on $[0,1]$. Then, by the inverse transform sampling, $F_\mu^{-1}(1-U)\sim\mu$, so $2\alpha-F_\mu^{-1}(1-U)\sim\mu$ since $\mu$ is symmetric with mean $\alpha$. Since $u\mapsto 2\alpha-F_\mu^{-1}(1-u)$ is nondecreasing, then one can show that $2\alpha-F_\mu^{-1}(1-u)=F_\mu^{-1}(u)$, $du$-almost everywhere on $(0,1)$. Indeed, as shown in Lemma A.3 \cite{ApproxMOTJourdain2}, for all $u,q\in(0,1)$ such that $q<u$, if $F_\mu^{-1}(u)<2\alpha-F_\mu^{-1}(1-q)$, then
	\begin{align*}
	\PP(2\alpha-F_\mu^{-1}(1-U)\le F_\mu^{-1}(u))&\le\PP(2\alpha-F_\mu^{-1}(1-U)<2\alpha-F_\mu^{-1}(1-q))\\
	&\le q<u\le\PP(F_\mu^{-1}(U)\le F_\mu^{-1}(u))=\PP(2\alpha-F_\mu^{-1}(1-U)\le F_\mu^{-1}(u)),
	\end{align*}
	which is contradictory, so $F_\mu^{-1}(u)\ge\sup_{q\in(0,u)}(2\alpha-F_\mu^{-1}(1-q))$. By symmetry, $2\alpha-F_\mu^{-1}(1-u)\ge\sup_{q\in(0,u)}F_\mu^{-1}(q)=F_\mu^{-1}(u)$ by left-continuity and monotonicity of $F_\mu^{-1}$. Since $F_\mu^{-1}$ has an at most countable set of discontinuities, then for $du$-almost all $u\in(0,1)$, $2\alpha-F_\mu^{-1}(1-u)=\sup_{q\in(0,u)}(2\alpha-F_\mu^{-1}(1-q))\le F_\mu^{-1}(u)\le 2\alpha-F_\mu^{-1}(1-u)$. Therefore, $2\alpha-F_\mu^{-1}(1-u)=F_\mu^{-1}(u_+)$, $du$-almost everywhere on $(0,1)$ and even everywhere on $(0,1)$ since both sides are right-continuous.
	
	%
\end{proof}

\begin{lemma}\label{FmuXCorrigeUniforme} Let $\mu\in\mathcal P(\R)$, let $X:\Omega\to\R$ be a random variable with distribution $\mu$ and let $V$ be a random variable independent from $X$ and uniformly distributed on $(0,1)$. Let $W:\Omega\to\R$ be the random variable defined by
	\[
	W=F_\mu(X_-)+V(F_\mu(X)-F_\mu(X_-)).
	\]

Then $W$ is uniformly distributed on $(0,1)$, and $F_\mu^{-1}(W)=X$ almost surely.
\end{lemma}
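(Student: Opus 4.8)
The plan is to identify the full joint law of $(X,W)$: I will show that for every bounded measurable $h:\R^2\to\R$,
\[\E[h(X,W)]=\int_0^1 h\big(F_\mu^{-1}(u),u\big)\,du,\]
which says that $(X,W)$ has the law of $\big(F_\mu^{-1}(U),U\big)$ for $U$ uniform on $(0,1)$. Both assertions of the lemma then follow at once: taking $h(x,w)=f(w)$ gives $\E[f(W)]=\int_0^1 f(u)\,du$, i.e. $W$ is uniform on $(0,1)$; and since the displayed identity shows the law of $(X,W)$ is carried by the graph $\{(F_\mu^{-1}(u),u):u\in(0,1)\}$ (test against $h(x,w)=\1_{\{x\neq F_\mu^{-1}(w)\}}$), we get $X=F_\mu^{-1}(W)$ almost surely.

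First I would unfold the definition of $W$: by independence of $V$ and $X$ and Fubini's theorem,
\[\E[h(X,W)]=\int_\R\int_0^1 h\big(x,\,F_\mu(x_-)+v(F_\mu(x)-F_\mu(x_-))\big)\,dv\,\mu(dx),\]
and then use the inverse transform sampling ($F_\mu^{-1}(U)\sim\mu$) to rewrite the outer integral as $\int_0^1(\cdots)\,du$ with $x$ replaced by $F_\mu^{-1}(u)$.

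The core of the argument is a case split on whether $x:=F_\mu^{-1}(u)$ is a continuity point of $F_\mu$ or an atom of $\mu$; every real number is one or the other, so the sets $C=\{u:F_\mu^{-1}(u)\text{ is a continuity point}\}$ and $D=\{u:F_\mu^{-1}(u)\text{ is an atom}\}$ partition $(0,1)$. On $C$: by Lemma \ref{FmoinsUnrondF}, $F_\mu(y)<u$ for every $y<F_\mu^{-1}(u)$, hence $F_\mu(F_\mu^{-1}(u)_-)\le u\le F_\mu(F_\mu^{-1}(u))$, and since $F_\mu^{-1}(u)$ is a continuity point all three coincide with $u$, so the inner $dv$-integral collapses to $h(F_\mu^{-1}(u),u)$. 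On $D$: writing $D=\bigsqcup_{x\text{ atom}}\{u:F_\mu^{-1}(u)=x\}$ and using Lemma \ref{FmoinsUnrondF} to see that $\{u:F_\mu^{-1}(u)=x\}$ equals $(F_\mu(x_-),F_\mu(x)]$ up to a single point, one notes that the integrand $\int_0^1 h(x,F_\mu(x_-)+v(F_\mu(x)-F_\mu(x_-)))\,dv$ does not depend on $u$ across this interval; integrating $u$ over it (length $F_\mu(x)-F_\mu(x_-)$) and substituting $s=F_\mu(x_-)+v(F_\mu(x)-F_\mu(x_-))$ turns the contribution of the atom $x$ into $\int_{F_\mu(x_-)}^{F_\mu(x)}h(x,s)\,ds=\int_{(F_\mu(x_-),F_\mu(x)]}h(F_\mu^{-1}(s),s)\,ds$. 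Summing over atoms gives $\int_D h(F_\mu^{-1}(s),s)\,ds$, and adding the two parts yields the claimed identity since $C\sqcup D=(0,1)$.

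The only delicate point, and the place where care is needed, is the bookkeeping of null sets: for an atom $x$ the set $\{u:F_\mu^{-1}(u)=x\}$ may or may not contain its left endpoint $F_\mu(x_-)$ (depending on whether $F_\mu$ is flat immediately to the left of $x$), so $D$ differs from $\bigcup_{x\text{ atom}}(F_\mu(x_-),F_\mu(x)]$ by at most countably many points; likewise the identification $F_\mu(F_\mu^{-1}(u))=u$ on $C$ must be read off cleanly from Lemma \ref{FmoinsUnrondF} rather than invoked loosely. None of this affects the integrals, and beyond this accounting the proof is a direct application of Fubini and Lemma \ref{FmoinsUnrondF}, so I do not anticipate a genuine obstacle.
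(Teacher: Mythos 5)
Your proof is correct and follows essentially the same computation as the paper's: Fubini, a split of the $\mu(dx)$-integral into continuity points and atoms of $\mu$, the substitution on each jump interval, and the identifications $F_\mu(F_\mu^{-1}(u))=u$ off the atoms and $F_\mu^{-1}(s)=x$ for $s\in(F_\mu(x_-),F_\mu(x)]$ coming from Lemma \ref{FmoinsUnrondF}. The only organizational difference is that you establish the joint law of $(X,W)$ in a single identity and read off both conclusions from it, whereas the paper first proves $\E[h(W)]=\int_0^1h(u)\,du$ and then gives a separate pathwise argument for $F_\mu^{-1}(W)=X$ (on $\{F_\mu(X_-)=F_\mu(X)\}$ via $F_\mu^{-1}(F_\mu(x))=x$ $\mu(dx)$-a.e., and on the jump set via $F_\mu(X_-)<W\le F_\mu(X)$); your packaging is marginally more economical but rests on the same facts.
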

\begin{proof} Let $h:\R\to\R$ be a measurable and bounded function. Then
	\begin{align*}
	\E[h(W)]&=\E[h(F_\mu(X_-)+V(F_\mu(X)-F_\mu(X_-))]=\int_0^1\int_\R h(F_\mu(x_-)+v(F_\mu(x)-F_\mu(x_-)))\,\mu(dx)\,dv\\
	&=\int_0^1\int_\R \1_{\{\mu(\{x\})=0\}}h(F_\mu(x_-)+v(F_\mu(x)-F_\mu(x_-)))\,\mu(dx)\,dv\\
	&\phantom{=}+\int_0^1\int_\R\1_{\{\mu(\{x\})>0\}} h(F_\mu(x_-)+v(F_\mu(x)-F_\mu(x_-)))\,\mu(dx)\,dv\\
	&=\int_\R \1_{\{\mu(\{x\})=0\}}h(F_\mu(x))\,\mu(dx)+\sum_{x\in\R:\mu(\{x\})>0}\int_{F_\mu(x_-)}^{F_\mu(x)}h(v)\,dv\\
	&=\int_0^1 \1_{\{\mu(\{F_\mu^{-1}(u)\})=0\}}h(F_\mu(F_\mu^{-1}(u)))\,du+\sum_{x\in\R:\mu(\{x\})>0}\int_{F_\mu(x_-)}^{F_\mu(x)}h(v)\,dv\\
	&=\int_0^1 \1_{\{\mu(\{F_\mu^{-1}(u)\})=0\}}h(u)\,du+\sum_{x\in\R:\mu(\{x\})>0}\int_{F_\mu(x_-)}^{F_\mu(x)}h(v)\,dv,
	\end{align*}
	where we used for the last but one equality the inverse transform sampling, and for the last equality the fact that $F_\mu(F_\mu^{-1}(u))=u$ if $F_\mu$ is continuous in $F_\mu^{-1}(u)$. One can easily see that for all $x\in\R$ and $u\in(0,1)$,
	\[
	F_\mu(x_-)<u\le F_\mu(x)\implies x=F_\mu^{-1}(u)\implies F_\mu(x_-)\le u\le F_\mu(x),
	\]
	which implies
	\[
	\bigcup_{x\in\R:\mu(\{x\})>0}\left(F_\mu(x_-),F_\mu(x)\right]\subset\{u\in(0,1)\mid\mu(\{F_\mu^{-1}(u)\})>0\}\subset\bigcup_{x\in\R:\mu(\{x\})>0}[F_\mu(x_-),F_\mu(x)],
	\]
	so
	\[
	\sum_{x\in\R:\mu(\{x\})>0}\int_{F_\mu(x_-)}^{F_\mu(x)}h(v)\,dv=\int_0^1 \1_{\{\mu(\{F_\mu^{-1}(u)\})>0\}}h(u)\,du.
	\]
	
	Therefore, $\E[h(W)]=\int_0^1 \1_{\{\mu(\{F_\mu^{-1}(u)\})=0\}}h(u)\,du+\int_0^1 \1_{\{\mu(\{F_\mu^{-1}(u)\})>0\}}h(u)\,du=\int_0^1h(u)\,du$. So $W$ is uniformly distributed on $(0,1)$. Moreover, on $\{F_\mu(X_-)=F_\mu(X)\}$, $W=F_\mu(X)$ and by Lemma \ref{FmoinsUnrondF}, $F_\mu^{-1}(W)=X$ almost surely. Since $V>0$ a.s., on $\{F_\mu(X_-)<F_\mu(X)\}$, a.s., $F_\mu(X_-)<W\le F_\mu(X)$ so $F_\mu^{-1}(W)=X$.
\end{proof}

\bibliographystyle{plain}
\bibliography{5-biblio}

\end{document}